\title{Mapping class groups of $h$-cobordant manifolds}
\author{Samuel Mu\~noz-Ech\'aniz}
\email{sm2600@cam.ac.uk}
\address{Centre for Mathematical Sciences, Wilberforce Road, Cambridge CB3 0WB, United Kingdom}
\keywords{$h$-cobordism, mapping class group, diffeomorphism group, block diffeomorphism, Whitehead torsion, pseudoisotopy}
\subjclass[2020]{57R80, 55R60, 57S05, 57N37, 57Q10}
\DeclareFontFamily{OT1}{ptm}{}
\DeclareFontShape{OT1}{ptm}{m}{n} { <-> ptmr}{}
\DeclareFontShape{OT1}{ptm}{m}{it}{ <-> ptmri}{}
\DeclareFontShape{OT1}{ptm}{m}{sl}{ <->ptmro}{}
\DeclareFontShape{OT1}{ptm}{m}{sc}{ <-> ptmrc}{}
\DeclareFontShape{OT1}{ptm}{b}{n} { <-> ptmb}{}
\DeclareFontShape{OT1}{ptm}{b}{it}{ <-> ptmbi}{}     
\DeclareFontShape{OT1}{ptm}{bx}{n} {<->ssub * ptm/b/n}{}
\DeclareFontShape{OT1}{ptm}{bx}{it}{<->ssub * ptm/b/it}{}
\DeclareSymbolFont{bold}{OT1}{ptm}{b}{n}
\DeclareMathAlphabet{\mathbf}{OT1}{ptm}{b}{n}  
\DeclareMathAlphabet{\mathrm}{OT1}{ptm}{m}{n}
\DeclareFontFamily{OT1}{psy}{}      
\DeclareFontShape{OT1}{psy}{m}{n}{ <-> s * [0.9] psyr}{}
\DeclareFontFamily{OMS}{ptm}{}     
\DeclareFontShape{OMS}{ptm}{m}{n}{ <8> <9> <10> gen * cmsy }{}
\DeclareFontFamily{OMS}{cmtt}{}     
\DeclareFontShape{OMS}{cmtt}{m}{n}{ <8> <9> <10> gen * cmsy }{}
\DeclareSymbolFont{emsy}{OT1}{ptm}{m}{it}
\DeclareSymbolFont{emsr}{OT1}{ptm}{m}{n}
\DeclareSymbolFont{emcmr}{OT1}{cmr}{m}{n}   
\DeclareSymbolFont{emsymb}{OT1}{psy}{m}{n}  
\DeclareMathSymbol a{\mathalpha}{emsy}{"61}
\DeclareMathSymbol b{\mathalpha}{emsy}{"62}
\DeclareMathSymbol c{\mathalpha}{emsy}{"63}
\DeclareMathSymbol d{\mathalpha}{emsy}{"64}
\DeclareMathSymbol e{\mathalpha}{emsy}{"65}
\DeclareMathSymbol f{\mathalpha}{emsy}{"66}
\DeclareMathSymbol g{\mathalpha}{emsy}{"67}
\DeclareMathSymbol h{\mathalpha}{emsy}{"68}
\DeclareMathSymbol i{\mathalpha}{emsy}{"69}
\DeclareMathSymbol j{\mathalpha}{emsy}{"6A}
\DeclareMathSymbol k{\mathalpha}{emsy}{"6B}
\DeclareMathSymbol l{\mathalpha}{emsy}{"6C}
\DeclareMathSymbol m{\mathalpha}{emsy}{"6D}
\DeclareMathSymbol n{\mathalpha}{emsy}{"6E}
\DeclareMathSymbol o{\mathalpha}{emsy}{"6F}
\DeclareMathSymbol p{\mathalpha}{emsy}{"70}
\DeclareMathSymbol q{\mathalpha}{emsy}{"71}
\DeclareMathSymbol r{\mathalpha}{emsy}{"72}
\DeclareMathSymbol s{\mathalpha}{emsy}{"73}
\DeclareMathSymbol t{\mathalpha}{emsy}{"74}
\DeclareMathSymbol u{\mathalpha}{emsy}{"75}
\DeclareMathSymbol v{\mathalpha}{emsy}{"76}
\DeclareMathSymbol w{\mathalpha}{emsy}{"77}
\DeclareMathSymbol x{\mathalpha}{emsy}{"78}
\DeclareMathSymbol y{\mathalpha}{emsy}{"79}
\DeclareMathSymbol z{\mathalpha}{emsy}{"7A}
\DeclareMathSymbol A{\mathalpha}{emsy}{"41}
\DeclareMathSymbol B{\mathalpha}{emsy}{"42}
\DeclareMathSymbol C{\mathalpha}{emsy}{"43}
\DeclareMathSymbol D{\mathalpha}{emsy}{"44}
\DeclareMathSymbol E{\mathalpha}{emsy}{"45}
\DeclareMathSymbol F{\mathalpha}{emsy}{"46}
\DeclareMathSymbol G{\mathalpha}{emsy}{"47}
\DeclareMathSymbol H{\mathalpha}{emsy}{"48}
\DeclareMathSymbol I{\mathalpha}{emsy}{"49}
\DeclareMathSymbol J{\mathalpha}{emsy}{"4A}
\DeclareMathSymbol K{\mathalpha}{emsy}{"4B}
\DeclareMathSymbol L{\mathalpha}{emsy}{"4C}
\DeclareMathSymbol M{\mathalpha}{emsy}{"4D}
\DeclareMathSymbol N{\mathalpha}{emsy}{"4E}
\DeclareMathSymbol O{\mathalpha}{emsy}{"4F}
\DeclareMathSymbol P{\mathalpha}{emsy}{"50}
\DeclareMathSymbol Q{\mathalpha}{emsy}{"51}
\DeclareMathSymbol R{\mathalpha}{emsy}{"52}
\DeclareMathSymbol S{\mathalpha}{emsy}{"53}
\DeclareMathSymbol T{\mathalpha}{emsy}{"54}
\DeclareMathSymbol U{\mathalpha}{emsy}{"55}
\DeclareMathSymbol V{\mathalpha}{emsy}{"56}
\DeclareMathSymbol W{\mathalpha}{emsy}{"57}
\DeclareMathSymbol X{\mathalpha}{emsy}{"58}
\DeclareMathSymbol Y{\mathalpha}{emsy}{"59}
\DeclareMathSymbol Z{\mathalpha}{emsy}{"5A}
\DeclareMathSymbol{\bullet}{\mathalpha}{emsymb}{"B7}
\def\Bullet{\leavevmode\unkern{$\m@th\bullet$}\kern.32em\ignorespaces}
\DeclareMathSymbol +{\mathbin}{emcmr}{`+}
\DeclareMathSymbol ={\mathrel}{emcmr}{`=}  
\DeclareMathSymbol{\Gamma}{\mathalpha}{emcmr}{"00}
\DeclareMathSymbol{\Delta}{\mathalpha}{emcmr}{"01}
\DeclareMathSymbol{\Theta}{\mathalpha}{emcmr}{"02}
\DeclareMathSymbol{\Lambda}{\mathalpha}{emcmr}{"03}
\DeclareMathSymbol{\Xi}{\mathalpha}{emcmr}{"04}
\DeclareMathSymbol{\Pi}{\mathalpha}{emcmr}{"05}
\DeclareMathSymbol{\Sigma}{\mathalpha}{emcmr}{"06}
\DeclareMathSymbol{\Upsilon}{\mathalpha}{emcmr}{"07}
\DeclareMathSymbol{\Phi}{\mathalpha}{emcmr}{"08}
\DeclareMathSymbol{\Psi}{\mathalpha}{emcmr}{"09}
\DeclareMathSymbol{\Omega}{\mathalpha}{emcmr}{"0A}
\DeclareMathAccent{\dot}{\mathalpha}{operators}{"C7} % ditto math mode
\newcommand{\Fun}{\mathrm{Fun}}
\newcommand{\diff}{\mathrm{Diff}}
\newcommand{\bdiff}{\widetilde{\mathrm{Diff}}}
\newcommand{\subcomp}{\mathsf{SubComp}^{\simeq *}}
\newcommand{\SubComp}{\mathsf{SubComp}}
\newcommand{\face}{\mathsf{Face}}
\newcommand{\hcob}{\hspace{2pt}\ensuremath \raisebox{-1pt}{$\overset{h}{\leadsto}$}\hspace{2pt}}
\newcommand{\scob}{\hspace{2pt}\ensuremath \raisebox{-1pt}{$\overset{s}{\leadsto}$}\hspace{2pt}}
\newtheorem{thm}{Theorem}[section]%HERE CHANGE NUMBERING!!!!!!!!!!!!!![section] if numbering depends only on section, [subsection] if it depends on subsection
\newtheorem*{thm*}{Theorem}
\newtheorem*{lem*}{Lemma}
\newtheorem{lem}[thm]{Lemma}
\newtheorem*{cl}{Claim}
\newtheorem*{question*}{\textbf{Question}}
\newtheorem{thmx}{Theorem}
\newtheorem{notn}[thm]{Notation}
\newcommand*\circled[1]{\tikz[baseline=(char.base)]{%
            \node[shape=circle,draw,inner sep=1pt] (char) {#1};}}
\newtheorem{prop}[thm]{Proposition}
\newtheorem{defn}[thm]{Definition}
\theoremstyle{remark}
\newtheorem{exm}[thm]{Example}
\newtheorem{rem}[thm]{Remark}%\newenvironment{proof}{{\sc Proof:}}{~\hfill QED}
\newtheorem{digr}[thm]{Digression}
\newtheorem{warn}[thm]{Warning}
\newcommand{\mapsfrom}{\mathrel{\reflectbox{\ensuremath{\longmapsto}}}}
\newcommand{\R}{\mathbb{R}}
\newcommand{\C}{\mathbb{C}}
\newcommand{\Z}{\scaleobj{0.97}{\mathbb{Z}}}
\newcommand{\Q}{\mathbb{Q}}
\newcommand{\im}{\mathrm{Im}\ }
\newcommand{\Wh}{\mathrm{Wh}}
\newcommand{\bdiffh}{\widetilde{\mathrm{Diff}}{\vphantom{\mathrm{Diff}}}^{\hspace{1pt} h}}
\newcommand{\bdiffb}{\widetilde{\mathrm{Diff}}{\vphantom{\mathrm{Diff}}}^{\hspace{1pt}b}}
\newcommand{\Whsp}{\mathbf{Wh}^{\diff}}
\newcommand{\Whsptop}{\mathbf{Wh}^{\mathrm{Top}}}
\newcommand{\Falg}{F^{\mathsf{alg}}}
\newcommand{\Hsp}{\mathbf{H}}
\numberwithin{equation}{section}
\newcommand{\adjunction}{\mathbin{\rotatebox[origin=c]{-90}{$\dashv$}}}
\newsavebox{\pullback}
\sbox\pullback{%
\begin{tikzpicture}%
\draw (0,0) -- (1ex,0ex);%
\draw (1ex,0ex) -- (1ex,1ex);%
\end{tikzpicture}}
\newsavebox{\pushout}
\sbox\pushout{%
\begin{tikzpicture}%
\draw (0,0) -- (0ex,1ex);%
\draw (0ex,1ex) -- (1ex,1ex);%
\end{tikzpicture}}
\begin{document}
\begin{abstract}
We prove that the mapping class group is not an $h$-cobordism invariant of high-dimensional manifolds by exhibiting $h$-cobordant manifolds whose mapping class groups have different cardinalities. In order to do so, we introduce a moduli space of "$h$-block" bundles and understand its difference with the moduli space of ordinary block bundles.
\end{abstract}
\maketitle
\vspace{-25pt}
\section{Introduction}
\subsection{The main result} Automorphism groups of manifolds have been subject to extensive research in algebraic and geometric topology. Inspired by the study of how different $h$-cobordant manifolds can be (see e.g. \cite{JahrenKwasikhcobs, JahrenKwasikInertiaHcobs}), in the present paper we investigate the question of how automorphism groups of manifolds can vary within a fixed $h$-cobordism class. Namely: given an $h$-cobordism $W^{d+1}$ between (closed) smooth\footnote{We will work in the smooth setting for notational preference, but all of the results in this paper are equally valid for the topological and $PL$ categories. See Remarks \ref{smoothcategoryremark} and \ref{diffremarkno2} for modified arguments when $CAT=\mathrm{Top}$ and $PL$.} manifolds $M$ and $M'$ of dimension $d\geq 0$, how different can the homotopy types of the diffeomorphism groups $\diff(M)$ and $\diff(M')$ be?

Certain analogues of this question have led to invariance-type results. Dwyer and Szczarba \cite[Cor. 2]{dwyer} proved that when $d\neq 4$, the rational homotopy type of the identity component $\diff_0(M)\subset \diff(M)$ does not change as $M^d$ varies within a fixed homeomorphism class of smooth manifolds. Krannich \cite[Thm. A]{KrannichExoticSpheres} gave another instance of such a result, showing that when $d=2k\geq 6$ and $M^d$ is closed, oriented and simply connected, the rational homology of $B\diff^+(M)$ in a range is insensitive to replacing $M$ by $M\#\Sigma$, for $\Sigma$ any homotopy $d$-sphere. 

Our main result is, however, that the homotopy types of the diffeomorphism groups of $h$-cobordant manifolds can indeed be different in general. Let $\Gamma(M)$ denote the \textit{mapping class group} of $M$---the group of isotopy classes of diffeomorphisms of $M$, i.e., $\Gamma(M):=\pi_0(\diff(M))$. The \textit{block mapping class group} $\widetilde\Gamma(M)$ is the quotient of $\Gamma(M)$ by the normal subgroup of those classes of diffeomorphisms which are pseudoisotopic to the identity.

\begin{thmx}\label{diffdifference}
In each dimension $d=12k-1\geq 0$, there exist $d$-manifolds $M^d$ (see Theorem \ref{ThmAiequivalent}) $h$-cobordant to the lens space $L=L_7^{12k-1}(r_1:\dots: r_{6k})$, where
$$
r_1=\dots=r_k=1,\qquad r_{k+1}=\dots=r_{2k}=2, \quad \dots \quad r_{5k+1}=\dots=r_{6k}=6\mod 7,
$$
such that
\begin{itemize}
    \item[($i$)] $\widetilde{\Gamma}(L)$ and $\widetilde{\Gamma}(M)$ are finite groups with cardinalities of different $3$-adic valuations,  
    \item[($ii$)] $\Gamma(L)$ and $\Gamma(M)$ are finite groups with cardinalities of different $3$-adic valuations. 
\end{itemize}

\end{thmx}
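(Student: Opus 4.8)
The strategy I would follow is to reduce $\widetilde{\Gamma}$ to the interplay of block diffeomorphisms, homotopy automorphisms and the structure space, and then use the $h$-block formalism to isolate the part that depends on the $h$-cobordism class. For a closed manifold $N$ there is a homotopy fibre sequence $\widetilde{\mathcal{S}}^{\diff}(N)\to B\bdiff(N)\to B\mathrm{hAut}^{s}(N)$, where $\mathrm{hAut}^{s}(N)$ is the monoid of simple homotopy self-equivalences and $\widetilde{\mathcal{S}}^{\diff}(N)$ the block structure space; its long exact sequence presents $\widetilde{\Gamma}(N)=\pi_0\bdiff(N)$ as an extension
$$1 \longrightarrow K(N) \longrightarrow \widetilde{\Gamma}(N) \longrightarrow \mathrm{Stab}\big([\mathrm{id}_N]\big) \longrightarrow 1,$$
with $K(N)=\pi_1\widetilde{\mathcal{S}}^{\diff}(N)/\mathrm{im}\,\pi_1\mathrm{hAut}^{s}(N)$ and the quotient the stabiliser of the base-point under the action of $\pi_0\mathrm{hAut}^{s}(N)$ on the structure set $\pi_0\widetilde{\mathcal{S}}^{\diff}(N)$. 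If $N\hcob L$, a homotopy equivalence $\phi\colon N\to L$ supplied by the $h$-cobordism is covered by a stable tangent bundle isomorphism, so it matches the surgery exact sequences and hence the groups $K(N)\cong K(L)$, and (being a homotopy equivalence) $\pi_1\mathrm{hAut}^{s}(N)\cong\pi_1\mathrm{hAut}^{s}(L)$; the monoid $\pi_0\mathrm{hAut}^{s}$ and the structure set are transported equivariantly, and the only genuine change is that the base-point moves from $[\mathrm{id}_L]$ to the class of $(N,\phi)$. So among $h$-cobordant manifolds $\widetilde{\Gamma}$ can vary only through the stabiliser summand. This is exactly where the comparison of $B\bdiff(L)$ with $B\bdiffh(L)$ from the earlier sections enters: it identifies $B\bdiffh(L)$ with the part of $B\bdiff(L)$ insensitive to this base-point move, the discrepancy being a Whitehead-type space depending only on $(\Z/7,d)$, and it controls how the base-point depends on $\tau(W)\in\Wh(\Z/7)$.

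For the lens space $L=L^{12k-1}_7(\dots)$ of the statement, the dimension $d=12k-1=2\cdot 6k-1$ and the weight system ``$k$ copies of each residue $1,\dots,6\bmod 7$'' are forced by wanting the Reidemeister torsion $\prod_{j=1}^{6}(t^{j}-1)^{k}$ to be invariant under the whole Galois group $(\Z/7)^{\times}$ acting by $t\mapsto t^{c}$; the congruence $d\equiv 3\bmod 4$, together with the fact that $7$ is an odd prime, is what makes the pertinent $L$- and $K$-theoretic contributions of $\Z[\Z/7]$ finite and hence $\widetilde{\Gamma}(L)$ finite. The symmetry gives $\pi_0\mathrm{hAut}^{s}(L)\cong(\Z/7)^{\times}\cong\Z/6$ with every class realised by an isometry, so $\mathrm{Stab}([\mathrm{id}_L])=\Z/6$, in particular a subgroup of order $3$. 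Now take $M$ homotopy equivalent to $L$ with non-Galois-invariant Reidemeister torsion — for instance the lens space with $4k$ weights equal to $1$, $k$ equal to $2$ and $k$ equal to $3$, or more generally the far end $\partial_1 W_{\tau}$ of an $h$-cobordism $W_\tau$ from $L$ with a suitable torsion $\tau\in\Wh(\Z/7)$. Then $M\hcob L$ (homotopy-equivalent lens spaces are $h$-cobordant) but $M\not\cong L$, and now $\mathrm{Stab}([\mathrm{id}_M])$ drops to $\{\pm 1\}\cong\Z/2$. Since $K(M)\cong K(L)$ by the previous paragraph,
$$\nu_3\big(\,|\widetilde{\Gamma}(L)|\,\big)-\nu_3\big(\,|\widetilde{\Gamma}(M)|\,\big)=\nu_3\big(|\Z/6|\big)-\nu_3\big(|\Z/2|\big)=1\neq 0,$$
which is $(i)$.

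For $(ii)$ one passes from $\widetilde{\Gamma}$ to $\Gamma$ via the kernel $\Delta(N)=\ker(\Gamma(N)\to\widetilde{\Gamma}(N))$ of mapping classes pseudoisotopic to the identity. In the concordance stable range (which $d=12k-1$ lies in) this is controlled by $\bdiff(N)/\diff(N)$, which by smoothing theory is assembled from the smooth Whitehead spectrum $\Whsp(N)$ and hence, by Waldhausen's theorem, depends only on the homotopy type of $N$; together with the tangent-bundle matching for $h$-cobordant manifolds (and the analogous invariance of $\pi_1\bdiff(N)$ via the structure-space sequence) this gives $\Delta(M)\cong\Delta(L)$. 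Therefore $\nu_3(|\Gamma(L)|)-\nu_3(|\Gamma(M)|)=\nu_3(|\widetilde{\Gamma}(L)|)-\nu_3(|\widetilde{\Gamma}(M)|)=1\neq 0$, which is $(ii)$.

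The main obstacle is the content of the second paragraph, together with the bookkeeping behind the first: one must actually prove that every contribution to $\widetilde{\Gamma}$ other than the $\pi_0\mathrm{hAut}^{s}$-stabiliser — the Whitehead correction comparing $B\bdiff$ with $B\bdiffh$, and the homotopy groups of the structure space — is $h$-cobordism invariant (this is the conceptual payoff of the $h$-block bundle comparison, together with the claim that the relevant surgery data of $h$-cobordant manifolds agree under $\phi$), carry out the lens-space computations of $\pi_0\mathrm{hAut}^{s}$, of the realisation of self-equivalences by isometries, and of the structure set, and verify that the order-$3$ element of $\mathrm{Stab}([\mathrm{id}_L])$ is genuinely lost for $M$ and not compensated for elsewhere. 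Establishing finiteness of all the groups involved is part of this, and rests on the known computations of the $L$- and $K$-groups of $\Z[\Z/7]$ in the relevant degrees.
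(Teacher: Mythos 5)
Your proposal is structurally parallel to the paper's but takes a different fibration as its engine, and the key homotopy-invariance assertion it rests on is not only unproved but false, which is precisely the subtlety the paper's $h$-block construction is designed to capture.

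The paper works with the fibration $\bdiffh/\bdiff(M)\to B\bdiff(M)\to B\bdiffh(M)$ coming from Theorem B, where $B\bdiffh(M)$ genuinely has the same universal cover for $h$-cobordant $M$, and compares the two resulting extensions via Proposition \ref{leftextensionlemma} and Proposition \ref{conditionprop}: the only place the extensions differ is the image $I(M)/\mathcal D(M)$ of the boundary map $\partial\colon\pi_1(B\bdiffh)\to\pi_0(\bdiffh/\bdiff)$, and one then produces an explicit unit $u=2+2t-t^3-t^4-t^5$ to make this image have order $3$. You instead use the surgery fibration $\widetilde{\mathcal S}^{\diff}(N)\to B\bdiff(N)\to Bs\mathrm{Aut}(N)$ and assert that the kernel $K(N)=\mathrm{coker}\bigl(\pi_1 s\mathrm{Aut}(N)\to\pi_1\widetilde{\mathcal S}^{\diff}(N)\bigr)$ is an $h$-cobordism invariant, with the change of basepoint absorbed by the stabiliser in the action on the structure set. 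This is where the argument breaks: the $s$-block structure space $\widetilde{\mathcal S}^{s}(N)$ (and the monoid $s\mathrm{Aut}(N)$) is not an invariant of the $h$-cobordism class, because the natural homotopy equivalence $h^W\colon L\simeq M$ is not simple; post-composition with $h^W$ does not take simple structures to simple structures, and conjugation by $h^W$ does not take $s\mathrm{Aut}(M)$ to $s\mathrm{Aut}(L)$ (the torsion picks up a correction $\tau(h^W)-f_*^{-1}\tau(h^W)$). Measuring this failure is exactly the content of Theorem B and of the Rothenberg comparison in Section \ref{Rothsection}; saying it "matches the surgery exact sequences" because the tangent bundles match ignores that the $s$-decorated structure space records torsion, not just normal data. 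In fact, if $K(N)\cong K(L)$ held as you claim, then for two $h$-cobordant linear lens spaces with different weight systems the argument would yield $|\widetilde\Gamma(L)|\ne|\widetilde\Gamma(M')|$, whereas the paper's exact ladder (both sides having $I/\mathcal D=0$ by Milnor's inertia-set vanishing for lens spaces) forces equality — a concrete contradiction.

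This also makes your proposed example problematic. Your first candidate $M$ is itself a linear lens space and therefore has trivial inertia set, so by the paper's own machinery it satisfies $|\widetilde\Gamma(M)|=|\widetilde\Gamma(L)|$; the paper's $M$ is the end of an $h$-cobordism with a carefully chosen unit torsion and is a fake (not linear) lens space. Your second candidate ("suitable $\tau$") is left unspecified, and the bulk of the work is precisely in producing the unit and computing $|I(M)/\mathcal D(M)|=3$ via the explicit Galois diffeomorphisms $\phi_i$ and their effect on $u$. For part (ii), the assertion $\Delta(M)\cong\Delta(L)$ for the pseudoisotopy kernels is again unsupported (matching the source groups $\pi_1(\bdiff/\diff)$ via homotopy invariance of $\Whsp$ does not match the boundary maps into $\pi_0\diff$), and what the paper actually proves, using Weiss--Williams plus the Hatcher--Wagoner identification and the $K$-theory input $K_2(\Z[C_7])_{(3)}=0$, is the weaker but sufficient statement that $\pi_1(\bdiff/\diff(L))$ has no $3$-torsion. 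That concrete $K$-theory computation is indispensable and is missing from your sketch.
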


\begin{rem}
For an oriented connected manifold $M$, there are orientation preserving mapping class groups $\Gamma^+(M)$ and $\widetilde\Gamma^+(M)$, which have index one or two inside the whole mapping class groups $\Gamma(M)$ and $\widetilde{\Gamma}(M)$, respectively. Therefore, the conclusions of Theorem \ref{diffdifference} also hold for $\widetilde{\Gamma}^+(-)$ and $\Gamma^+(-)$.
\end{rem}

\begin{rem}
Theorem \ref{diffdifference}($i$) is the best possible result in the following sense: let $\bdiff(M)$ denote the (geometric realisation of the) semi-simplicial group of \textit{block diffeomorphisms} of $M$ (cf. \cite[p. 20]{BurgLashRoth} or \cite[Defn. 2.1]{RWEbertbdiff}), whose $p$-simplices consist of diffeomorphisms $\phi: M\times \Delta^p\overset{\cong}{\longrightarrow} M\times \Delta^p$ which are face-preserving (i.e. for every face $\sigma\subset \Delta^p$, $\phi$ restricts to a diffeomorphism of $M\times \sigma$). Then we have $\widetilde{\Gamma}(M)=\pi_0(\bdiff((M))$. The restriction map $\rho_M:\bdiff(W)\longrightarrow \bdiff(M)$ is a fibration with fibre $\bdiff_M(W)$, the subspace of block diffeomorphisms of $W$ which fix pointwise a neighbourhood of $M\subset W$. By the $s$-cobordism theorem (see Theorem \ref{scobthm} below), there exists some $h$-cobordism $-W$ from $M'$ to $M$ such that $W\cup_{M'}-W \cong M\times I$ and $-W\cup_{M} W \cong M'\times I$. Then the group homomorphisms
\begin{align*}
\mathrm{Id}_{W}\cup_{M'}-&: \widetilde{C}(M'):=\bdiff_{M'\times\{0\}}(M'\times I)\longrightarrow \bdiff_M(W),\\
\mathrm{Id}_{-W}\cup_{M}-&: \bdiff_M(W)\longrightarrow \bdiff_{M'}(-W\cup_M W)\cong \widetilde{C}(M')
\end{align*}
are easily seen to be homotopy inverse to each other. But the group $\widetilde{C}(M')$ of \textit{block concordances} of $M'$ is contractible (cf. \cite[Lem. 2.1]{BurgLashRoth}), and therefore $\rho_M$ induces an equivalence onto the components that it hits, and similarly for $\rho_{M'}$. In other words, the classifying spaces $B\bdiff(M)$ and $B\bdiff(M')$ share the same universal cover, so
$$
\pi_i(\bdiff(M))\cong \pi_i(\bdiff(M')), \quad i\geq 1.
$$
The upshot is that the homotopy types of $\bdiff(M)$ and $\bdiff(M')$ can at most differ by their sets of path-components, and Theorem \ref{diffdifference}($i$) provides an example showcasing this phenomenon.
\end{rem}

\subsection{\texorpdfstring{Moduli spaces of $h$- and $s$-block bundles}{Moduli spaces of h- and s-block bundles}} Recall that for $d\geq 5$, the \textit{Whitehead group} $\Wh(M)$ of a compact $d$-manifold $M$ (see Section \ref{WhTsection}) classifies isomorphism classes of $h$-cobordisms starting at $M$. This group has an involution denoted $\tau\mapsto \overline \tau$ which, roughly speaking and up to a factor of $(-1)^d$, corresponds to reversing the direction of an $h$-cobordism (see (\ref{hcobdualformula})).

In Section \ref{section3} we will introduce the $h$- and $s$-\textit{block moduli spaces}, $\widetilde{\mathcal M}^h$ and $\widetilde{\mathcal M}^s$ respectively, whose vertices (as semi-simplicial sets) are the smooth closed $d$-manifolds, for some fixed integer $d\geq 0$. A path in the former (resp. latter) space between $d$-manifolds $M$ and $M'$ is exactly an $h$-cobordism $W: M\hcob M'$ (resp. an $s$-cobordism $W: M\scob M'$, i.e., an $h$-cobordism with vanishing Whitehead torsion (see Section \ref{scobsection})). The $s$-block moduli space $\widetilde{\mathcal M}^s$ is, somewhat in disguise, a well-known object; in Proposition \ref{connectedcompsofMs} we identify the path-component of $M^d$ in $\widetilde{\mathcal M}^s$ with $B\bdiff(M)$, the classifying space for the group of block diffeomorphisms of $M$.

The second main result we state arises as part of the proof of Theorem \ref{diffdifference}, but may be of independent interest: there is a natural inclusion $\widetilde{\mathcal M}^s\xhookrightarrow{}\widetilde{\mathcal M}^h$ which forgets the simpleness condition. We identify the homotopy fibre of this inclusion (i.e. the homotopical difference between the $h$- and $s$-block moduli spaces) as a certain infinite loop space.

\begin{thmx}\label{ThmB}
Let $M$ be a closed $d$-dimensional manifold, and let $C_2:=\{e,t\}$ act on the Whitehead group $\Wh(M)$ by $t\cdot\tau:=(-1)^{d-1}\overline\tau$. Write $H\Wh(M)$ for the \textit{Eilenberg--MacLane} spectrum associated to $\Wh(M)$, and let $H\Wh(M)_{hC_2}:=H\Wh(M)\wedge_{C_2}(EC_2)_+$ stand for the homotopy $C_2$-orbits of $H\Wh(M)$. For $d\geq 5$, there is a homotopy cartesian square
$$
\begin{tikzcd}
\Omega^\infty(H\Wh(M)_{hC_2})\rar\dar\arrow[dr, phantom, "\usebox\pullback" , very near start, color=black]  & {\widetilde{\mathcal M}^s}\dar[hook]\\
\{M^d\}\rar & {\widetilde{\mathcal M}^h},
\end{tikzcd}
$$
where the lower horizontal map is the inclusion of $M$ as a point in $\widetilde{\mathcal M}^h$.
\end{thmx}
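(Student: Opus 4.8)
The plan is to realize both block moduli spaces as nerves of categories (or spaces of "parametrized cobordisms") in which the homotopy fibre over $M$ becomes the space of self-$h$-cobordisms $W\colon M \hcob M$, and then identify that fibre with the stated infinite loop space via the theory of the Whitehead space. First I would give a clean model for $\widetilde{\mathcal M}^h$ and $\widetilde{\mathcal M}^s$ in which a vertex is a manifold, a $1$-simplex is an $h$- (resp. $s$-) cobordism, and higher simplices are "$h$-block cobordisms" over $\Delta^p$; with such a model the inclusion $\widetilde{\mathcal M}^s \hookrightarrow \widetilde{\mathcal M}^h$ is a map of semi-simplicial sets, and a standard fibration-sequence argument (using that these are classifying-space-like objects, via Proposition \ref{connectedcompsofMs} identifying the $s$-component with $B\bdiff(M)$) shows the homotopy fibre over the point $\{M^d\}$ is the homotopy fibre of $B\bdiff^s(W\text{-structures}) \to B\bdiff^h(\cdots)$ — concretely, the space classifying $h$-cobordisms $W\colon M\hcob M$ together with a trivialization as an $s$-cobordism, i.e.\ a path in the relevant Whitehead-type space.

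The key computational input is the structure theory recalled before the statement: for $d\ge 5$, $h$-cobordisms starting at $M$ are classified by $\Wh(M)$ (Theorem \ref{scobthm}), and the $s$-cobordisms are the "trivial" ones. So the homotopy fibre we want should be built from a $1$-parameter (indeed infinite-loop) delooping of $\Wh(M)$ that remembers the duality involution coming from reversing cobordisms. The step I would carry out next is to recognize that the space of $h$-cobordisms $W\colon M\hcob M$, as a parametrized/block object, is the structure space $\mathcal{H}(M)$ whose set of components is $\Wh(M)$ and whose higher homotopy is controlled by Waldhausen's stable parametrized $h$-cobordism theorem; but what actually enters here is only the \emph{low-degree} truncation, because the difference between $\widetilde{\mathcal M}^h$ and $\widetilde{\mathcal M}^s$ is detected entirely at the level of the $\pi_0$-set $\Wh(M)$ \emph{with its involution} — the composition/concatenation of $h$-cobordisms gives an $H$-space structure whose group of components is $\Wh(M)$, and the involution $\tau\mapsto (-1)^{d-1}\overline\tau$ is exactly the map induced by $W\mapsto \overline W$ (reversal), via formula (\ref{hcobdualformula}). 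Turning the pair "(concatenation structure) + (reversal $C_2$-action)" into a spectrum-level statement is precisely the assertion that the relevant infinite loop space is $\Omega^\infty$ of the homotopy $C_2$-orbits $H\Wh(M)_{hC_2}$: the $C_2$-orbits appear because in the block/moduli setting one is quotienting by the "change of which end is the source," i.e.\ by the reversal action, and the Eilenberg–MacLane spectrum appears because only $\pi_0$ of the $h$-cobordism space is nontrivial in the range that governs the fibre.

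The way I would assemble the homotopy cartesian square: form the map $\widetilde{\mathcal M}^s \to \widetilde{\mathcal M}^h$, take its homotopy fibre $F$ over $M$, and produce a natural weak equivalence $F \simeq \Omega^\infty(H\Wh(M)_{hC_2})$. One direction comes from the explicit model for $F$ as (block) self-$h$-cobordisms of $M$ equipped with an $s$-trivialization, mapping to the bar-type/orbit construction on $\Wh(M)$ by recording the Whitehead torsion together with the $s$-trivialization data; for surjectivity and injectivity on homotopy groups I would use the fibration $\rho_M\colon \bdiff(W)\to\bdiff(M)$ and the contractibility of block concordance spaces from the second Remark, which already shows that all the higher homotopy is "the same" for $h$-cobordant manifolds, so that the fibre $F$ is a product of Eilenberg–MacLane pieces assembled from $\Wh(M)$. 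The fact that one gets $hC_2$-orbits rather than, say, fixed points or the naive spectrum, I would pin down by a direct inspection of $\pi_0$ and $\pi_1$: $\pi_0(F)$ must be the set of self-$h$-cobordisms up to the equivalence generated by $s$-cobordism \emph{and} reversal, which is $\Wh(M)/(C_2\text{-action})= H_0(C_2;\Wh(M)) = \pi_0 H\Wh(M)_{hC_2}$, while $\pi_1(F)$ picks up $H_1(C_2;\Wh(M))$, matching the homotopy-orbit spectral sequence for $H\Wh(M)_{hC_2}$.

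The main obstacle I anticipate is the second step — giving a functorial, homotopically-meaningful model of the $h$-block moduli space and its comparison with the $s$-version that is rigid enough to (a) produce an honest fibration sequence and (b) let one identify the parametrized space of self-$h$-cobordisms with the infinite loop space of an Eilenberg–MacLane spectrum, \emph{including} the precise form of the $C_2$-action. In particular, checking that the concatenation of $h$-cobordisms induces the \emph{additive} (infinite-loop, not just $H$-space) structure on the fibre, and that reversal is realized by the $C_2$-action with the twist by $(-1)^{d-1}$, will require care: one must track orientations/Euler-characteristic signs through the duality formula (\ref{hcobdualformula}) and verify the coherence needed to deloop. Everything else — the classification of $h$-cobordisms by $\Wh(M)$, contractibility of block concordances, and the identification of the $s$-component with $B\bdiff(M)$ — is quoted from the results recalled above.
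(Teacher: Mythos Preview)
Your overall picture is right in spirit—form the homotopy fibre $F$ of $\widetilde{\mathcal M}^s\hookrightarrow\widetilde{\mathcal M}^h$ over $M$, and identify it with $\Omega^\infty(H\Wh(M)_{hC_2})$ using that the $s$-cobordism theorem governs the difference—but there is a genuine gap at exactly the point you flag as ``the main obstacle.'' Checking $\pi_0$ and $\pi_1$ by hand and invoking contractibility of block concordances does not produce a map $F\to\Omega^\infty(H\Wh(M)_{hC_2})$, nor does it show $F$ is an infinite loop space; and the appeal to Waldhausen's stable parametrised $h$-cobordism theorem is a red herring here, since only the Postnikov piece $\Wh(M)=\pi_1\Whsp(M)$ is in play, not the higher $A$-theoretic information. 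Your ``concatenation plus reversal gives the $hC_2$-structure'' heuristic is suggestive but does not, as stated, yield the required coherent deloopings.

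The paper's proof supplies exactly the missing device. A $p$-simplex of $F_\bullet(M)$ is a manifold $W^{d+p+1}$ stratified over $\Delta^{p+1}$, and the key observation is that all of its simple-homotopy content is captured by the functor $\tau_W\colon (K\subset L)\mapsto \tau(W_K\hookrightarrow W_L)\in\Wh(M)$ on the poset of contractible subcomplexes of $\Delta^{p+1}$. The properties of Whitehead torsion force $\tau_W$ to satisfy an inclusion--exclusion rule (from (\ref{whiteheadtorsincexc})) and a ``face--horn duality'' (the higher-simplex incarnation of (\ref{hcobdualformula})). One then \emph{defines} a simplicial abelian group $\Falg_\bullet(A)$, for any $\Z[C_2]$-module $A$, whose simplices are precisely such functors; this is manifestly a simplicial abelian group, hence an infinite loop space, and its normalised Moore complex is computed directly to be the standard complex $\mathbb{A}_{hC_2}$ resolving $H_*(C_2;A)$. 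Thus $|\Falg_\bullet(A)|\simeq\Omega^\infty(HA_{hC_2})$ by Dold--Kan, with no delooping machinery needed. Finally the torsion assignment $W\mapsto\tau_W$ gives a semi-simplicial map $F_\bullet(M)\to\Falg_\bullet(\Wh(M))$, and the $s$-cobordism theorem (applied inductively up the faces) shows it is a $\pi_*$-isomorphism. So the step you were worried about—building the infinite loop structure and the comparison map—is handled not by delooping a concatenation monoid but by passing to an explicit algebraic model of the entire simplicial fibre at once.
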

As we will explain in Section \ref{Rothsection}, this result is intimately tied to the \textit{Rothenberg exact sequence} \cite[Prop. 1.10.1]{Ranickiexactseq}.

\subsection*{Structure of the paper} 
Section \ref{section2} serves as a reminder to the reader of the $s$-\textit{cobordism theorem} and some of the properties of \textit{Whitehead torsion}. 

In Section \ref{section3} we prove Theorem \ref{ThmB}. The proof boils down to arguing that certain simplicial abelian group $\Falg_\bullet(A)$ corresponds to the spectrum $HA_{hC_2}$ under the Dold--Kan correspondence (see Theorem \ref{ActualThmC}).

Section \ref{section4} deals with part $(i)$ of Theorem \ref{diffdifference}, which is proved in Theorem \ref{ThmAiequivalent}. We analyse the lower degree part of the homotopy long exact sequence associated to the homotopy pullback square of Theorem \ref{ThmB}. The proof of Theorem \ref{diffdifference}$(ii)$ builds on part $(i)$ and pseudoisotopy theory, and comprises Section \ref{section5}.

Appendix \ref{AppendixA} is an algebraic $K$-theory computation required for Sections \ref{section4} and \ref{section5}. Appendix \ref{appendixB} explores the connection between Theorem \ref{ThmB} and the theory of Weiss--Williams \cite{WWI}.

\subsection*{Acknowledgements}
The author is immensely grateful to his Ph.D. supervisor, Oscar Randal-Williams, for suggesting this problem and for many illuminating discussions that have greatly benefited this work. He would also like to thank Connor Malin for originally raising the question in \cite{MathOverflowHCob}, the anonymous referee for valuable feedback and corrections, John Nicholson for making him aware of \cite{Zhang2019}, and Sander Kupers, Robin Stoll and Henry Wilton for further helpful comments. The author was supported by an EPSRC Ph.D. Studentship (grant no. 2597647).

\section{Notation and recollections}\label{section2}
All manifolds will be assumed to be compact and smooth (possibly with corners).
\subsection{Whitehead Torsion}\label{WhTsection} The \textit{Whitehead group} of $(\pi,w)$ \cite[$\S$6]{MilnorWhiteheadTorsion}, where $\pi$ is a group and $w: \pi\to C_2=\{\pm1\}$ is a homomorphism, is the abelian group
$$
\Wh(\pi,w):=GL(\Z\pi)^{\mathrm{ab}}/(\pm\pi)
$$
equipped with the following involution: the anti-involution on the group ring $\Z\pi$ given by
\begin{equation}\label{Zpi1antiinvolution}
    a=\sum_{g\in \pi}a_g\cdot g\longmapsto \overline a:=\sum_{g\in \pi}w(g)\hspace{1pt} a_g\cdot g^{-1}, \quad a_g\in \Z,
\end{equation}
induces an involution on $\Wh(\pi,w)$ by sending an element represented by a matrix $\tau=(\tau_{ij})$ to its conjugate transpose $\overline\tau:=(\overline{\tau}_{ji})$. We will refer to this involution as the \textit{algebraic involution} on $\Wh(\pi,w)$. We will write $\Wh(\pi)$ for $\Wh(\pi,w)$ if $w$ is the trivial homomorphism, or if we are simply disregarding this involution. If $X$ is a finite CW-complex with a choice of basepoint in each of its connected components, the \textit{Whitehead group} of $X$ is
$$
\Wh(X):=\bigoplus_{X_j\in \pi_0(X)}\Wh(\pi_1(X_j)).
$$
If $X=M$ is moreover a manifold, the algebraic involution on $\Wh(M)$ is that induced by $w=w_1(M)\in H^1(M;\Z/2)$, the first Stiefel--Whitney class of $M$. Since every inner automorphism of a group $\pi$ induces the identity on $\Wh(\pi)$ \cite[Lem. 6.1]{MilnorWhiteheadTorsion}, the Whitehead group $\Wh(X)$ does not depend (up to canonical isomorphism) on the choice of basepoint in each path component of $X$; for this reason, we shall ignore basepoints from now on. 

Given a homotopy equivalence between finite pointed CW-complexes $f: X\overset{\simeq}\longrightarrow Y$, we will denote by $\tau(\hspace{2pt}f)\in \Wh(X)$ its (\textit{Whitehead}) \textit{torsion} \cite[$\S$7]{MilnorWhiteheadTorsion}. It only depends on $f$ up to homotopy \cite[Lem. 7.7]{MilnorWhiteheadTorsion}. Let us collect a few properties of the Whitehead torsion $\tau(-)$ that we will use throughout the paper:

\begin{itemize}[leftmargin=9pt, itemsep=4pt]
    \item \textit{Composition rule}: $\tau(-)$ is a crossed homomorphism in the sense that if $f:X\overset\simeq\longrightarrow Y$ and $g: Y\overset\simeq\longrightarrow Z$ are homotopy equivalences, then \cite[Lem. 7.8]{MilnorWhiteheadTorsion}
\begin{equation}\label{whiteheadtorsioncomposition}
\tau(g\circ f)=\tau(\hspace{2pt}f)+f_*^{-1}\tau(g),
\end{equation}
where $f_*: \Wh(X)\overset{\cong}{\longrightarrow} \Wh(Y)$ is the natural isomorphism induced by $\pi_1(\hspace{2pt}f): \pi_1(X)\overset{\cong}{\longrightarrow} \pi_1(Y)$. 

\item \textit{Inclusion-exclusion principle}: if $X=X_0\cup X_1$ and $Y=Y_0\cup Y_1$, where $X_0$, $X_1$, $Y_0$, $Y_1$, $X_{01}:=X_0\cap X_1$ and $Y_{01}:=Y_{0}\cap Y_1$ are all finite CW-complexes, and 
$$
f_0: X_0\overset\simeq\longrightarrow Y_0, \quad f_1: X_1\overset\simeq\longrightarrow Y_1,\quad f_{01}=f_0\cap f_1: X_{01}\overset\simeq\longrightarrow Y_{01},
$$
are homotopy equivalences, then the torsion of the homotopy equivalence $f=f_0\cup f_1: X\overset{\simeq}\longrightarrow Y$ is \cite[Thm. 23.1]{Cohen1973-zw}
\begin{equation}\label{whiteheadtorsincexc} 
\tau(\hspace{2pt} f)=(i_0)_*\tau(\hspace{2pt} f_0)+(i_1)_*\tau(\hspace{2pt} f_1)-(i_{01})_*\tau(\hspace{2pt} f_{01})\in\Wh(X),
\end{equation}
where $i_0: X_0\xhookrightarrow{}X$, $i_1: X_1\xhookrightarrow{}X$ and $i_{01}: X_{01}\xhookrightarrow{}X$ are the inclusions.

\item \textit{Product rule}: $\tau(-)$ is multiplicative with respect to the Euler characteristic in the sense that for any homotopy equivalence $f: X\overset\simeq \longrightarrow Y$ and any finite connected CW-complex $K$ with basepoint $*\in K$ \cite[Thm. 23.2]{Cohen1973-zw},
\begin{equation}\label{eulerchartors}
\tau(\hspace{2pt} f\times \mathrm{id}_K)=\chi(K)\cdot i_*\tau(\hspace{2pt} f)\in \Wh(X\times K), 
\end{equation}
where $i: X\cong X\times \{*\}\xhookrightarrow{}X\times K$ is the inclusion.
\end{itemize}

A homotopy equivalence $f$ as above is said to be \textit{simple}, and denoted $f: X\overset{\simeq_s}\longrightarrow Y$, if $\tau(\hspace{2pt}f)=0$. We will write $s\mathrm{Aut}(X)\subset h\mathrm{Aut}(X)$ for the topological submonoid (see (\ref{whiteheadtorsioncomposition})) of simple homotopy automorphisms of $X$.

\subsection{\texorpdfstring{The $s$-cobordism theorem}{The s-cobordism theorem}}\label{scobsection} Let $M^d$ be a smooth compact manifold of dimension $d$. A \textit{cobordism from} $M$ \textit{rel} $\partial M$ is a triple $(W; M, M')$, also written as $W: M \leadsto M'$, consisting of a $(d+1)$-manifold $W^{d+1}$  with boundary
$$
\partial W\cong M\cup M'\cup (\partial M\times [0,1])
$$
so that $M\cap (\partial M\times [0,1])=\partial M\times\{0\}$ and $M'\cap (\partial M\times [0,1])=\partial M\times \{1\}$ (in particular $\partial M'=\partial M$). Cobordisms are often accompanied with an additional data of \textit{collars}, i.e., open neighbourhoods of $M$ and $M'$ in $W$ diffeomorphic to $M\times [0,\epsilon)$ and $M'\times (1-\epsilon,1]$ (rel $\partial M\times I$) for some small $\epsilon>0$, but the choice of such is contractible. If $\partial M=\emptyset$, this coincides with the usual notion of a cobordism between closed manifolds. Such a cobordism is called an \textit{h-cobordism} if the inclusions $i_M: (M,\partial M)\to (W, \partial M\times I)$ and $i_{M'}: (M',\partial M')\to (W, \partial M\times I)$ are homotopy equivalences of pairs. In such case we will write $W: M\hcob M'$ to emphasise that $W$ is an $h$-cobordism from $M$ to $M'$. The \textit{torsion} of $W$ with respect to $M$ is
$$
\tau(W,M):=\tau(i_M)\in \Wh(M).
$$
If $\tau(W,M)=0$, such an $h$-cobordism $W:M\hcob M'$ is said to be \textit{simple} (or an \textit{$s$-cobordism}), and denoted $W: M\scob M'$. This definition does not depend on the direction of $W$ since the torsion of an $h$-cobordism satisfies the \textit{duality formula} \cite[$\S$10]{MilnorWhiteheadTorsion}
\begin{equation}\label{hcobdualformula}
    \tau(W,M')=(-1)^{d}(h^W)_*\overline{\tau(W,M)}.
\end{equation}
Here $h^W: M\simeq M'$ is the \textit{natural homotopy equivalence}
\begin{equation}\label{natheqW}
\begin{tikzcd}
h^W: M\rar[hook, "i_M", "\simeq"'] &W\rar[two heads,"r_{M'}", "\simeq"'] &M',
\end{tikzcd}
\end{equation}
where $r_{M'}$ is some homotopy inverse to $i_{M'}$ (so $h^W$ is only well-defined up to homotopy).

Due to the composition rule (\ref{whiteheadtorsioncomposition}), the torsion of an $h$-cobordism is nearly additive with respect to composition: namely if $W:M\hcob M'$ and $W': M'\hcob M''$ are $h$-cobordisms, we write $W'\circ W: M\hcob M''$ for the $h$-cobordism $W\cup_{M'}W'$, which can be made smooth by pasting along collars. Then
\begin{equation}\label{torsofhcobcomposition}
    \tau(W'\circ W,M)=\tau(W,M)+(h^W)_*^{-1}\tau(W',M').
\end{equation}

Let $h\mathrm{Cob}_\partial(M)$ denote the set of $h$-cobordisms rel boundary starting at $M$, up to diffeomorphism rel $M$. We will use the following a great deal \cite{Mazur1963, Bardenscob}.

\begin{thm}[$s$-Cobordism Theorem rel boundary]\label{scobthm}
If $d=\dim M\geq 5$, then there is a bijection
$$
h\mathrm{Cob}_\partial(M)\longleftrightarrow \mathrm{Wh}(M), \qquad 
(W: M\hcob M')\longmapsto \tau(W,M).
$$

\end{thm}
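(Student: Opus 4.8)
The assignment $W\mapsto\tau(W,M)$ is manifestly well-defined on diffeomorphism classes rel $M$: a diffeomorphism $W\overset{\cong}{\longrightarrow}V$ under $M$ is in particular a homotopy equivalence under $M$, and Whitehead torsion is a homotopy invariant \cite[Lem.~7.7]{MilnorWhiteheadTorsion}. So the plan is to show that this map is a bijection by establishing surjectivity (the \emph{realisation} step) together with injectivity, the latter being the $s$-cobordism theorem proper. Throughout I would follow the classical handle-theoretic route (Smale; Barden \cite{Bardenscob}; Mazur \cite{Mazur1963}; see also \cite[\S\S5--11]{MilnorWhiteheadTorsion}); since every handle manipulation below takes place in the interior, away from the collar $\partial M\times I$, the rel-boundary statement follows from the closed case with no extra work.

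For \emph{surjectivity} I would work one path-component of $M$ at a time, so assume $M$ connected with $\pi:=\pi_1(M)$, and let $\tau\in\Wh(M)=\Wh(\pi)$ be represented by a matrix $A\in GL_n(\Z\pi)$. Starting from $M\times I$, attach $n$ trivial $2$-handles to $M\times\{1\}$ (legitimate since $d\geq5$), and then attach $n$ $3$-handles along disjoint framed embedded $2$-spheres in the new upper boundary chosen so that their algebraic incidence matrix with the belt spheres of the $2$-handles equals $A$ (possible by general position, using $d\geq5$). Van Kampen gives $\pi_1(W)\cong\pi$, and the cellular chain complex of the universal cover relative to $M$ is the two-term complex $(\Z\pi)^n\xrightarrow{A}(\Z\pi)^n$, which is acyclic because $A$ is invertible; hence $i_M: M\hookrightarrow W$ is a $\Z\pi$-homology and $\pi_1$-isomorphism, so a homotopy equivalence, i.e.\ $W$ is an $h$-cobordism, with $\tau(W,M)=\pm[A]$. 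As $\Wh(M)$ is a group, every class is realised.

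\emph{Injectivity} reduces to the case $\tau(W,M)=0$, where one must show $W\cong M\times I$ rel $M$. Choose a handle decomposition of $W$ rel $M\times\{0\}$. Because $W$ is an $h$-cobordism (so $\pi_1(W,M)=1$) and $\dim W=d+1\geq6$, one trades away the $0$- and $1$-handles (a $1$-handle for a $3$-handle) and, dually (viewing $W$ as an $h$-cobordism rel $M'$), the $d$- and $(d{+}1)$-handles, and then rearranges so that all remaining handles lie in two consecutive indices $k,k{+}1$ with $2\leq k\leq d-2$. The relative cellular chain complex of the universal cover is then $(\Z\pi)^n\xrightarrow{\partial}(\Z\pi)^n$ with $\partial$ an isomorphism, and in the handle bases $\tau(W,M)=\pm[\partial]$. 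As this vanishes, after stabilising $\partial$ by an identity block — geometrically, introducing cancelling $(k,k{+}1)$-handle pairs — the matrix of $\partial$ becomes a product of elementary matrices and a $(\pm\pi)$-diagonal matrix; realising these operations by handle slides and reorientations makes the incidence matrix equal to the identity. The Whitney trick — valid since $\dim W\geq6$ and $2\leq k$, $k{+}1\leq d-1$ — then makes each $(k{+}1)$-handle's attaching sphere meet its partner $k$-handle's belt sphere transversely in a single point and be disjoint from the others, so all handle pairs cancel geometrically and $W\cong M\times I$ rel $M$.

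For general injectivity, suppose $W: M\hcob M'$ and $V: M\hcob M''$ with $\tau(W,M)=\tau(V,M)=:\tau$. By the realisation step applied to $M''$, pick an $h$-cobordism $V': M''\hcob N$ with $\tau(V',M'')=-(h^V)_*(\tau)$; then the composition rule (\ref{torsofhcobcomposition}) gives $\tau(V'\circ V,M)=0$, so by the case just settled $V'\circ V\cong M\times I$ rel $M$ — in particular $N\cong M$, so take $V': M''\hcob M$, and then $h^{V'}\simeq(h^V)^{-1}$ because $h^{V'}\circ h^V\simeq h^{V'\circ V}\simeq\mathrm{id}_M$. Applying (\ref{torsofhcobcomposition}) to $W\circ V'$ and using $\tau(W,M)=\tau$ gives $\tau(W\circ V',M'')=-(h^V)_*\tau+(h^V)_*\tau=0$, so $W\circ V'\cong M''\times I$ rel $M''$. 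Finally, by associativity of composition of $h$-cobordisms and the fact that gluing a trivial product $h$-cobordism onto one end changes nothing rel the opposite end, $W\cong W\circ(V'\circ V)=(W\circ V')\circ V\cong V$ rel $M$, which is the desired injectivity. I expect the main obstacle to be the $s$-cobordism step of the previous paragraph — above all the Whitney-trick cancellation of handle pairs, together with the low-index handle trading — as this is the technical heart and the place where the hypothesis $d\geq5$ is indispensable.
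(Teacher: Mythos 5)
The paper does not prove this theorem: it records it as the classical $s$-cobordism classification and cites Mazur \cite{Mazur1963} and Barden \cite{Bardenscob} in the sentence immediately preceding the statement, so there is no in-paper argument against which to measure yours. Taken on its own, your sketch is the standard handle-theoretic proof — realisation by attaching complementary handle rows with prescribed incidence matrix; the $\tau=0$ case by low- and high-index handle trading, normalisation to two adjacent indices, stabilisation, and Whitney cancellation (all requiring $\dim W = d+1 \geq 6$); and a reduction of general injectivity to the $\tau=0$ case by composing with realised $h$-cobordisms. The places where $d\geq5$ is used are identified correctly.

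One point in the final paragraph must be made explicit, or else it is a genuine gap. When you say ``in particular $N\cong M$, so take $V':M''\hcob M$'', the identification $N\cong M$ has to be the one supplied by the diffeomorphism $\psi:V'\circ V\overset{\cong}{\longrightarrow} M\times I$ rel $M$ that you just produced, namely $\psi|_N : N\overset{\cong}{\longrightarrow}M\times\{1\}=M$. With that choice $\psi$ is the identity on \emph{both} ends of $V'\circ V$, and both absorptions $W\circ(V'\circ V)\cong W$ rel $M$ and $(W\circ V')\circ V\cong V$ rel $M$ go through by sliding a cylinder into a collar. With an arbitrary identification $N\cong M$, the diffeomorphism $V'\circ V\cong M\times I$ rel $M$ restricts on the top to some $\psi_1\in\diff(M)$ which is only known to be homotopic to $\mathrm{id}_M$, and then $W\circ(V'\circ V)\cong (M\times I)\cup_{\psi_1^{-1}}W$ rel $M$ is diffeomorphic to $W$ rel $M$ only if $\psi_1$ is pseudoisotopic to the identity — which homotopic does not give. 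The same choice is what makes your claim $h^{V'\circ V}\simeq\mathrm{id}_M$ correct. With the identification pinned down, the chain $W\cong W\circ(V'\circ V)=(W\circ V')\circ V\cong V$ rel $M$ is valid and the proof is complete.
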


\section{The block moduli spaces of manifolds}\label{section3}
As explained in the introduction, we now present the $h$- and $s$-block moduli spaces of manifolds, in which a path, i.e., a $1$-simplex, is an $h$- or $s$-cobordism, respectively. To describe what higher-dimensional simplices should be we give the next definition, which is inspired by \cite[$\S$2]{RWLuckCollar}.

\begin{defn}\label{stratifieddefn}
Fix once and for all some small $\epsilon>0$. A compact smooth manifold with corners $W^{d+p}\subset \R^\infty\times \Delta^p$ is said to be \textbf{stratified over} $\Delta^p$ if:
\begin{itemize}
\item[($i$)] $W$ is a closed manifold if $p=0$,

\item[($ii$)] $W$ is transverse to $\R^{\infty}\times \sigma$ for every proper face $\sigma\subset \Delta^p$ and $W_\sigma:=W\cap (\R^\infty\times \sigma)$ is a $(d+\dim\sigma)$-dimensional manifold stratified over $\Delta^{\dim\sigma}\cong \sigma$,

\item[($iii$)] $W$ satisfies the $\epsilon$-collaring conditions of \cite[Defn. 2.3.1(ii)]{RWLuckCollar}.
\end{itemize}
We will write $W^{d+p}\Rightarrow \Delta^p$ for such a manifold. A map $f: W\to V$ between manifolds stratified over $\Delta^p$ is said to be \textbf{face-preserving}, and denoted $f: W\to_\Delta V$, if for every face $\sigma\subset \Delta^p$ we have $f(W_\sigma)\subset V_\sigma$ and $f$ satisfies the collaring conditions of \cite[Defn. 2.3.1(iii)]{RWLuckCollar} (roughly, $f$ must be the product $f_\sigma\times \mathrm{Id}$ in the $\epsilon$-neighbourhood of the strata $W_\sigma$, where $f_\sigma:=f\mid_{W_\sigma}$). If moreover $f_\sigma$ is a homotopy equivalence, simple homotopy equivalence or diffeomorphism for all $\sigma\subset \Delta^p$, we will write $f: W\overset{\spadesuit}\longrightarrow_\Delta V$ for $\spadesuit=\ \simeq_h,\ \simeq_s$ or $\cong$, respectively.
\end{defn}

\begin{notn}\label{simplicialnotation} Let $\Lambda_i^p\subset \Delta^p$ denote the $i$-th horn of $\Delta^p$ ($i=0,\dots,p$).
\begin{itemize}
    \item If $0\leq i_0<\dots<i_r\leq p$, we write $\langle i_0,\dots,i_r\rangle\subset \Delta^p$ for the face spanned by the vertices $i_0,\dots, i_r\in \Delta^p$. 
    
    \item If $W$ is stratified over $\Delta^p$, we will often write $\partial_i W$ for $W_{\langle0,\dots,\widehat{i},\dots,p\rangle}$ and $W_i$ for $W_{\langle i\rangle}$. For instance, $\langle0\dots,\widehat{i\hspace{1pt}},\dots,p\rangle\equiv\partial_i\Delta^p\subset \Delta^p$. 
    
    \item If $K\subset \Delta^p$ is a simplicial sub-complex, we will write $W_K$ for $W\cap (\R^\infty\times K)$. In the particular case that $K=\Lambda_i^p$, we set $
    \Lambda_i(W):=W_{\Lambda_i^p}
    $. For instance, if $\sigma\subset \Delta^p$ is some face, $\Lambda_i(\sigma)$ denotes the $i$-th horn of $\sigma$ ($i=0,\dots,\dim\sigma$).
    
    \item  If $f: W\longrightarrow_\Delta V$ is face-preserving, we will write $\partial_i\hspace{2pt} f$ for $f_{\partial_i\Delta^{p}}=f\mid_{\partial_iW}$.
\end{itemize}

\end{notn}

\begin{exm}
A cobordism $W^{d+1}: M\leadsto M'$ between closed manifolds $M$ and $M'$ is always diffeomorphic to a manifold $W'\subset \R^\infty\times \Delta^1$ stratified over $\Delta^1$ with $W'_0\cong M$ and $W'_1\cong M'$.
\end{exm}

\begin{defn}\label{modulispacesdefn}
Fix some integer $d\geq 0$. The \textbf{$h$-block moduli space of $d$-manifolds} is the semi-simplicial set $\widetilde{\mathcal M}^h_\bullet$ with $p$-simplices
\begin{equation}\label{hmodspacepsimplex}
\widetilde{\mathcal M}^h_p:=\left\{\begin{array}{cc}
   W^{d+p}\\
   \Downarrow\\
   \Delta^p
\end{array}
:\exists\ f: W\overset{\simeq_h}\longrightarrow_\Delta W_0\times \Delta^p\right\},
\end{equation}
and with face maps given by restriction to face-strata
$$
\partial_i: \widetilde{\mathcal M}^h_p\longrightarrow \widetilde{\mathcal M}^h_{p-1}, \quad \begin{array}{cc}
   W^{d+p}\\
   \Downarrow\\
   \Delta^p
\end{array}\longmapsto \begin{array}{cc}
   \partial_iW^{d+p}\\
   \Downarrow\\
   \partial_i\Delta^p\cong \Delta^{p-1},
\end{array}\quad i=0,\dots,p.
$$

The \textbf{$s$-block moduli space of $d$-manifolds} $\widetilde{\mathcal M}^s_\bullet$ is its \textit{simple} analogue, where $\simeq_h$ in (\ref{hmodspacepsimplex}) is replaced by $\simeq_s$, and has a natural inclusion $\widetilde{\mathcal M}^s_\bullet\xhookrightarrow{}\widetilde{\mathcal M}^h_\bullet$. We will let $\widetilde{\mathcal M}^{h}$ and $\widetilde{\mathcal M}^s$ denote the geometric realisations $|\widetilde{\mathcal M}^{h}_\bullet|$ and $|\widetilde{\mathcal M}^{s}_\bullet|$, respectively.
\end{defn}

\begin{rem}
Let $M$ be a closed $d$-manifold and $\widetilde{\mathcal M}^s_\bullet(M)$ denote the path-component of $M$ in $\widetilde{\mathcal M}^s_\bullet$. Our definition of $\widetilde{\mathcal M}^s_\bullet(M)$ differs from that of $\mathcal{M}(M)_\bullet$ in \cite{RWLuckCollar} in that our condition (ii) in Definition \ref{stratifieddefn} is stronger than condition (i) of Definition 2.3.1 loc. cit.; there, it is only required that $W$ be transverse to $\R^\infty\times \sigma$ for faces of the form $\sigma=\partial_i \Delta^p$. As noted right after \cite[Defn. 2.3.1]{RWLuckCollar}, their defined $\mathcal{M}(M)_\bullet$ is Kan, and our stronger requirement does not affect this Kan condition as any proper subface of $\Delta^p$ that is not of the form $\partial_i\Delta^p$ is already a subface of any horn $\Lambda_j^p$. Thus, $\widetilde{\mathcal M}^s_\bullet$ (and similarly $\widetilde{\mathcal M}^h_\bullet$) is Kan.

%Under the conditions of Definition \ref{stratifieddefn}, the semi-simplicial sets $\widetilde{\mathcal M}^h_\bullet$ and $\widetilde{\mathcal M}^s_\bullet$ are Kan, as noted right after \cite[Defn. 2.3.1]{RWLuckCollar}.
\end{rem}

The next two subsections are devoted to prove Theorem \ref{ThmB}. But first, we study the $s$-block moduli space $\widetilde{\mathcal M}^s$ more closely. We recall that the classifying space $B\bdiff(M)$ for the simplicial group of block diffeomorphisms has a semi-simplicial model (see e.g. \cite{RWEbertbdiff}) in which the $p$-simplices are
$$
B\bdiff(M)_p=\left\{\begin{array}{cc}
   W^{d+p}\\
   \Downarrow\\
   \Delta^p
\end{array}
:\exists\ \phi: W\overset{\cong}\longrightarrow_\Delta M\times \Delta^p\right\},
$$
and therefore there is a forgetful inclusion $B\bdiff(M)\xhookrightarrow{} \widetilde{\mathcal M}^s$.
\begin{prop}\label{connectedcompsofMs}
For $d\geq 5$, there is a decomposition of $\widetilde{\mathcal M}^s$ into connected components
\begin{equation}\label{blockmodspacebdiff}
\widetilde{\mathcal M}^s=\bigsqcup_{\substack{[M^d] \ \text{up}\\ \text{to $s$-cob.}}} B\bdiff(M)=\bigsqcup_{\substack{[M^d] \ \text{up}\\ \text{to diffeo.}}} B\bdiff(M). 
\end{equation}
\end{prop}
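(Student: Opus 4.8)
The plan is to prove both equalities at the level of semi-simplicial sets and then pass to geometric realisations. The right-hand equality requires nothing new: by Theorem \ref{scobthm} an $s$-cobordism $W\colon M\scob M'$ with $d\geq 5$ is diffeomorphic rel $M$ to the trivial cobordism $M\times I$, so $M\cong M'$, while conversely any diffeomorphism $M\cong M'$ produces a trivial $s$-cobordism between them; and ``$s$-cobordant'' is an equivalence relation on closed $d$-manifolds (reflexivity via $M\times I$, symmetry via the duality formula (\ref{hcobdualformula}) together with $\overline 0=0$, transitivity via (\ref{torsofhcobcomposition})). Hence the two indexing sets in the statement agree.

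For the left-hand equality I would first identify $\pi_0(\widetilde{\mathcal M}^s)$. A $1$-simplex $W\Rightarrow\Delta^1$ of $\widetilde{\mathcal M}^s$ is precisely an $s$-cobordism from $W_0$ to $W_1$: given a face-preserving simple homotopy equivalence $f\colon W\overset{\simeq_s}\longrightarrow_\Delta W_0\times\Delta^1$, the identity of maps $f\circ i_{W_0}=\iota\circ f_0$ (where $i_{W_0}\colon W_0\hookrightarrow W$ and $\iota\colon W_0\times\{0\}\hookrightarrow W_0\times\Delta^1$) shows first that $i_{W_0}$, and likewise $i_{W_1}$, is a homotopy equivalence, so $W$ is an $h$-cobordism, and then, using the composition rule (\ref{whiteheadtorsioncomposition}) together with $\tau(f)=0$, $\tau(f_0)=0$ and the fact that an inclusion $X\hookrightarrow X\times I$ is simple, that $\tau(i_{W_0})=\tau(f\circ i_{W_0})=\tau(\iota\circ f_0)=\tau(f_0)+(f_0)_*^{-1}\tau(\iota)=0$; conversely, by Theorem \ref{scobthm} every $s$-cobordism is a product and is therefore realised as such a $1$-simplex joining its two ends (the bookkeeping of positioning it inside $\R^\infty\times\Delta^1$ with prescribed end-strata uses only that the space of embeddings of a fixed closed manifold into $\R^\infty$ is contractible). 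Since ``$s$-cobordant'' is already an equivalence relation, $\pi_0(\widetilde{\mathcal M}^s)$ is the set of $s$-cobordism classes, i.e.\ by the first paragraph the set of diffeomorphism classes of closed $d$-manifolds; this yields the disjoint-union decomposition and the outer equality of (\ref{blockmodspacebdiff}).

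It then remains to identify the component of $M$ with $B\bdiff(M)$. The forgetful map realises $B\bdiff(M)_\bullet$ as a sub-semi-simplicial set of $\widetilde{\mathcal M}^s_\bullet$ (a face-preserving diffeomorphism $W\cong_\Delta M\times\Delta^p$ is in particular a face-preserving simple homotopy equivalence onto $W_0\times\Delta^p$), and this sub-semi-simplicial set lies in the component of $M$ since all vertices of such a $W$ are diffeomorphic to $M$. I claim it exhausts that component: \emph{every} $p$-simplex $W\Rightarrow\Delta^p$ of $\widetilde{\mathcal M}^s$ all of whose vertices are diffeomorphic to $M$ is face-preservingly diffeomorphic to $M\times\Delta^p$. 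I would prove this by induction on $p$, the case $p=0$ being trivial; for $p\geq 1$ each facet $\partial_i W$ is a $(p-1)$-simplex of $\widetilde{\mathcal M}^s$ with all vertices diffeomorphic to $M$, so by the inductive hypothesis $\partial_i W\cong_\Delta M\times\partial_i\Delta^p$.

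The step I expect to be the main obstacle is to assemble these facet trivialisations into a single face-preserving diffeomorphism over all of $\Delta^p$: the diffeomorphisms produced on the various facets need not agree on their common codimension-two faces, so they cannot naively be glued. This is a parametrised form of the $s$-cobordism theorem over a simplex --- equivalently, the statement that such a $W\Rightarrow\Delta^p$ is a smooth block bundle over $\Delta^p$ and that block bundles over a simplex are trivial --- and I would handle it by constructing the trivialisation inductively over the flag of faces $\langle 0\rangle\subset\langle 0,1\rangle\subset\cdots\subset\langle 0,\ldots,p\rangle=\Delta^p$, at each stage extending a given trivialisation across one further dimension by combining the $\epsilon$-collars built into Definition \ref{stratifieddefn} with the uniqueness half of the $s$-cobordism theorem rel boundary (Theorem \ref{scobthm}) applied to the $h$-cobordism transverse to the newly added direction; this lies within the block bundle formalism underlying \cite{RWLuckCollar}. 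The resulting face-preserving diffeomorphism $W\cong_\Delta W_0\times\Delta^p\cong_\Delta M\times\Delta^p$ completes the identification of the component of $M$ with $B\bdiff(M)_\bullet$, and realising (\ref{blockmodspacebdiff}) as an equality of semi-simplicial sets gives the claimed equality of spaces.
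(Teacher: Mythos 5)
You identify the main obstacle correctly---the facet trivialisations produced by induction need not agree on codimension-two strata---but your proposed fix does not actually resolve it. Extending a trivialisation along the flag $\langle 0\rangle\subset\langle 0,1\rangle\subset\cdots\subset\Delta^p$ reduces the final step to: given a face-preserving diffeomorphism $\partial_p W\cong_\Delta M\times\Delta^{p-1}$ of a single facet, extend it to a face-preserving diffeomorphism $W\cong_\Delta M\times\Delta^p$. Applying the uniqueness half of Theorem~\ref{scobthm} to the unbent $h$-cobordism rel boundary $(W;\partial_p W)$ does produce \emph{some} diffeomorphism $W\cong M\times\Delta^p$ rel $\partial_p W$, but nothing forces it to carry the other strata $\partial_i W$ ($i<p$) to $M\times\partial_i\Delta^p$; the $s$-cobordism theorem controls the boundary of the cobordism only up to the prescribed identification, not its internal stratification. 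So the result need not be face-preserving, and the inductive step stalls in exactly the place you flagged. (The variant of gluing the facet trivialisations into a trivialisation over the full sphere $\partial\Delta^p$ fails for a different reason: $W_{\partial\Delta^p}\hookrightarrow W$ is not a simple homotopy equivalence since $\partial\Delta^p$ is not contractible.)

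The missing idea in the paper's proof is to \emph{strengthen the inductive claim}: one proves, by induction on $p$, that if compatible face-preserving diffeomorphisms $\phi_i\colon\partial_i W\cong_\Delta M\times\Delta^{p-1}$ are given on \emph{any} initial segment $0\le i\le r$ of the facets (with $\partial_i\phi_j=\partial_{j-1}\phi_i$), then they extend to a face-preserving $\phi\colon W\cong_\Delta M\times\Delta^p$. The inductive step first applies the claim at level $p-1$ repeatedly to manufacture compatible $\phi_0,\dots,\phi_{p-1}$ (each new $\phi_i$ is constructed subject to the boundary constraints inherited from the earlier ones), pastes them into a trivialisation of a \emph{horn} $\Lambda(W)\cong M\times\Lambda^p$, then uses Lemma~\ref{lemmasurgeryscob} together with the inclusion–exclusion principle (\ref{whiteheadtorsincexc}) to conclude $\Lambda(W)\hookrightarrow W$ is simple, and finally unbends the corners of $\Lambda(W)$ and applies Theorem~\ref{scobthm} rel boundary. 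Because a horn of $\Delta^p$ already contains $p$ of its $p+1$ facets, the resulting diffeomorphism is automatically face-preserving. Without building the whole horn---and without the strengthened claim that makes the compatible facet trivialisations available in the first place---the $s$-cobordism theorem alone cannot deliver the block triviality you need. I would also point out that the paper's $1$-simplex/$\pi_0$ analysis is extracted as a separate preparatory lemma (Lemma~\ref{lemmasurgeryscob}), which is then reused in the inductive step; your first two paragraphs essentially reconstruct that lemma, which is fine.
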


In order to prove this proposition, we will need the following simple observation.

\begin{lem}\label{lemmasurgeryscob}
Let $W^{d+p}\Rightarrow \Delta^p$ represent a $p$-simplex in $\widetilde{\mathcal M}^s_\bullet$. For every face inclusion $\xi\subset\sigma\subset \Delta^p$, the map $W_\xi\xhookrightarrow{}W_\sigma$ is a simple homotopy equivalence. In particular if $p=1$, $W$ is an $s$-cobordism from $W_{0}$ to $W_{1}$.
\end{lem}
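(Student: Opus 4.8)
The plan is to deduce everything from the composition rule for Whitehead torsion (\ref{whiteheadtorsioncomposition}), played off against the reference simple homotopy equivalence furnished by the definition of $\widetilde{\mathcal M}^s_\bullet$. Fix a face inclusion $\xi\subset\sigma\subset\Delta^p$ and write $j\colon W_\xi\hookrightarrow W_\sigma$ and $\iota\colon W_0\times\xi\hookrightarrow W_0\times\sigma$ for the two inclusions. Since $W$ represents a $p$-simplex of $\widetilde{\mathcal M}^s_\bullet$, there is a face-preserving map $f\colon W\overset{\simeq_s}{\longrightarrow}_\Delta W_0\times\Delta^p$; restricting to strata produces simple homotopy equivalences $f_\xi\colon W_\xi\overset{\simeq_s}{\longrightarrow}W_0\times\xi$ and $f_\sigma\colon W_\sigma\overset{\simeq_s}{\longrightarrow}W_0\times\sigma$ which, by the collaring clause in the definition of ``face-preserving'', fit into a square
\[
\begin{tikzcd}
W_\xi \rar["f_\xi"] \dar[hook] & W_0\times\xi \dar[hook] \\
W_\sigma \rar["f_\sigma"'] & W_0\times\sigma
\end{tikzcd}
\]
commuting up to homotopy (which is all that is needed, as $\tau(-)$ is a homotopy invariant). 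Since $\xi\hookrightarrow\sigma$ is a homotopy equivalence so is $\iota$, and hence by two-out-of-three so is $j$; in particular $\tau(j)\in\Wh(W_\xi)$ is defined.

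First I would check that $\iota$ is \emph{simple}: the simplex $\sigma$ collapses onto its face $\xi$, and the product of that collapse with $W_0$ is a collapse of $W_0\times\sigma$ onto $W_0\times\xi$, so $\tau(\iota)=0$. Then I would feed the factorisation $f_\sigma\circ j\simeq \iota\circ f_\xi$ into (\ref{whiteheadtorsioncomposition}) in two ways: from the left-hand factorisation, $\tau(f_\sigma\circ j)=\tau(j)+j_*^{-1}\tau(f_\sigma)=\tau(j)$ because $\tau(f_\sigma)=0$; from the right-hand one, $\tau(\iota\circ f_\xi)=\tau(f_\xi)+(f_\xi)_*^{-1}\tau(\iota)=0$ because $\tau(f_\xi)=\tau(\iota)=0$. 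Homotopy invariance of $\tau$ then forces $\tau(j)=0$, i.e. $j$ is a simple homotopy equivalence; this is the first assertion.

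For the last sentence I would specialise to $p=1$. Applying the above with $\sigma=\Delta^1$ and $\xi=\langle0\rangle$ shows that $W_0\hookrightarrow W$ is a simple homotopy equivalence, i.e. $\tau(W,W_0)=0$; applying it with $\xi=\langle1\rangle$ (or invoking two-out-of-three) shows $W_1\hookrightarrow W$ is a homotopy equivalence. Hence $W$ is an $h$-cobordism, indeed an $s$-cobordism, from $W_0$ to $W_1$.

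The argument is essentially bookkeeping and I do not anticipate a genuine obstacle; the only points requiring a little care are translating the collaring part of the definition of ``face-preserving'' into (homotopy-)commutativity of the square above, and the elementary fact that the product of a collapse with a finite complex is again a collapse.
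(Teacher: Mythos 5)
Your argument is correct and is essentially the paper's proof: both factor the inclusion $W_\xi \hookrightarrow W_\sigma$ (up to homotopy) through $W_0\times\xi \hookrightarrow W_0\times\sigma$ via the stratified simple homotopy equivalence $f$, and conclude by the composition rule. You spell out the torsion computation a bit more explicitly (and verify that the product inclusion is a collapse, hence simple), but the route and key idea are the same.
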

\begin{proof}
 Let $f:W\overset{\simeq_s}\longrightarrow_\Delta W_0\times \Delta^p$ be some face-preserving simple homotopy equivalence. The inclusion $W_\xi\xhookrightarrow{}W_\sigma$ is homotopic to a composition of simple maps 
 $$
\begin{tikzcd}
W_\xi\rar["f_\xi"', "\simeq_s"] &W_0\times \xi\ar[r,hook, "\simeq_s", "\text{by (\ref{eulerchartors})}"'] &W_0\times \sigma\rar["f_\sigma^{-1}"', "\simeq_s"]& W_\sigma,
\end{tikzcd}
$$
where $f_\sigma^{-1}$ is any homotopy inverse to $f_\sigma$. Therefore it is also simple.
\end{proof}

\begin{proof}[Proof of Proposition \ref{connectedcompsofMs}]
For a closed manifold $M^d$, let $\widetilde{\mathcal M}^s_\bullet(M)$ denote the path-component of $M$ in $\widetilde{\mathcal M}^s_\bullet$. We only have to argue that $\widetilde{\mathcal M}^s(M)\subset B\bdiff(M)$, which is the following claim when $r=-1$.
\begin{cl}
Let $W\in \widetilde{\mathcal M}^s_p(M)$ and suppose that for some $-1\leq r\leq p-1$, there exist face-preserving diffeomorphisms
$$
\phi_i: \partial_iW\overset{\cong}\longrightarrow_\Delta M\times\Delta^{p-1}, \quad 0\leq i\leq r,
$$
such that $\partial_i\phi_j=\partial_{j-1}\phi_{i}$ for $0\leq i<j\leq r$. Then there exists some face-preserving diffeomorphism $\phi: W\overset{\cong}\longrightarrow_\Delta M\times \Delta^p$ such that $\partial_i\phi=\phi_i$ for $0\leq i\leq r$. In particular $W\in B\bdiff(M)_p$.
\end{cl}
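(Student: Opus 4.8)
The plan is to prove the Claim by a nested induction: a primary induction on the simplex dimension $p$, and, for each fixed $p$, a secondary \emph{downward} induction on $r$, descending from $r=p-1$ to $r=-1$. For the primary base case $p=0$ there is no data $\phi_i$, and the Claim just asserts $W_0\cong M$: indeed $W_0\in\widetilde{\mathcal M}^s_0(M)$ is joined to $M$ by a path in $\widetilde{\mathcal M}^s_\bullet$ each of whose edges is, by Lemma \ref{lemmasurgeryscob}, an $s$-cobordism, hence a product cylinder by Theorem \ref{scobthm}, so all the manifolds along the path are mutually diffeomorphic.

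The geometric heart is the secondary base case $r=p-1$. Here the relations $\partial_i\phi_j=\partial_{j-1}\phi_i$ are exactly what is needed for $\phi_0,\dots,\phi_{p-1}$ to glue to a single face-preserving diffeomorphism $\Phi\colon\Lambda_p(W)\overset{\cong}{\longrightarrow}_\Delta M\times\Lambda_p^p$ of the $p$-th horn. Using the $\epsilon$-collaring conditions of Definition \ref{stratifieddefn}($iii$) and uniqueness of collars, I would first extend $\Phi$ to a face-preserving diffeomorphism on an open neighbourhood of $\Lambda_p(W)$ in $W$, and then extend across the interior of $\Delta^p$. Fixing a diffeomorphism of manifolds with corners
$$
(\Delta^p,\Lambda_p^p)\ \cong\ \bigl(\Delta^{p-1}\times[0,1],\ \Delta^{p-1}\times\{0\}\cup\partial\Delta^{p-1}\times[0,1]\bigr),
$$
one regards $W$ as a ``block $h$-cobordism over $\Delta^{p-1}$'' in the $[0,1]$-direction, already trivialised near the end $\Delta^{p-1}\times\{0\}$ and near $\partial\Delta^{p-1}\times[0,1]$; by Lemma \ref{lemmasurgeryscob} every face-inclusion of $W$ is simple, so this block cobordism is fibrewise simple. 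The $s$-cobordism theorem (Theorem \ref{scobthm}), in the parametrised form encoded by the contractibility of the semi-simplicial group of block concordances \cite[Lem. 2.1]{BurgLashRoth}, then provides the desired extension $\phi\colon W\overset{\cong}{\longrightarrow}_\Delta M\times\Delta^p$, which satisfies $\partial_i\phi=\phi_i$ for all $i$ by construction.

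For the secondary inductive step, assume $-1\le r<p-1$ and that the Claim holds in dimension $p-1$ and for the pair $(p,r+1)$. Restricting $\phi_0,\dots,\phi_r$ to $\partial_{r+1}W$ and using the simplicial identities, one obtains compatible face-preserving diffeomorphisms on the faces $\partial_0(\partial_{r+1}W),\dots,\partial_r(\partial_{r+1}W)$; since $\partial_{r+1}W\in\widetilde{\mathcal M}^s_{p-1}(M)$ and $r\le(p-1)-1$, the primary inductive hypothesis yields a compatible $\phi_{r+1}\colon\partial_{r+1}W\overset{\cong}{\longrightarrow}_\Delta M\times\Delta^{p-1}$. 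A short check with the simplicial identities shows that $\phi_0,\dots,\phi_{r+1}$ are again compatible, so the Claim for $(p,r+1)$ applies and finishes the step; taking $r=-1$ gives $W\in B\bdiff(M)_p$.

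I expect the secondary base case $r=p-1$ to be the main obstacle. Gluing the trivialisation on the horn and spreading it over a collar is formal, but carrying it across $W$ genuinely uses the $s$-cobordism theorem, and two points require care: keeping the extension genuinely \emph{face-preserving} rather than merely a diffeomorphism---this is precisely what the collar conditions of Definition \ref{stratifieddefn} are there to guarantee---and choosing the product identification of $(\Delta^p,\Lambda_p^p)$ so that Theorem \ref{scobthm} can be applied in a parametrised, boundary-respecting way. The index bookkeeping in the inductive step is routine but must be carried out attentively to confirm that compatibility persists after adjoining $\phi_{r+1}$.
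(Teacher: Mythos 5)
Your overall strategy is the same as the paper's: extend the given $\phi_0,\ldots,\phi_r$ to a full compatible system $\phi_0,\ldots,\phi_{p-1}$ by repeatedly invoking the Claim in dimension $p-1$; glue these to a face-preserving diffeomorphism of the horn $\Lambda_p(W)$; and extend across $W$ by the $s$-cobordism theorem. Your explicit nested induction---primary on $p$, secondary downward on $r$---is a more verbose rendition of what the paper carries out in one sentence ("By the induction hypothesis, we can find diffeomorphisms $\phi_i$ for $0\leq i\leq p-1$ such that $\partial_i\phi_j=\partial_{j-1}\phi_i\ldots$"), but the content is identical. Incidentally, you correctly identify the relevant horn as $\Lambda_p^p$ (the union of the faces $\partial_0,\ldots,\partial_{p-1}$); the $\Lambda_0$ appearing at the corresponding point in the paper is an indexing slip. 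And you are right that the $p=0$ case is easy rather than literally vacuous.

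The one place you misidentify the tool is the extension step. You appeal to "the $s$-cobordism theorem\ldots in the parametrised form encoded by the contractibility of the semi-simplicial group of block concordances." Contractibility of $\widetilde C(M)$ is an Alexander-trick statement about block automorphisms of a cylinder and does not amount to a family $s$-cobordism theorem; nor is any family version needed. After unbending the corners of $\Lambda_p(W)$ to obtain a compact $(d+p-1)$-manifold with boundary $N$, the manifold $W$ is an $h$-cobordism rel boundary from $N$; Lemma~\ref{lemmasurgeryscob} together with the inclusion--exclusion principle~\eqref{whiteheadtorsincexc} shows that $N\hookrightarrow W$ is a simple homotopy equivalence; and a \textit{single} application of Theorem~\ref{scobthm} gives a product structure $W\cong N\times[0,1]$ rel $N$ and the side collar, which after re-bending corners and composing with $\Lambda_p(\phi)$ is the required face-preserving $\phi$. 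Since your geometric picture (trivialised horn plus a simple cobordism across the simplex) is correct, this is a misattribution of the key input rather than a genuine gap, but as written the sentence asserts a "parametrised $s$-cobordism theorem via $\widetilde C$" that does not exist.
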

Proceed by induction on $p\geq 0$. The statement is vacuous when $p=0$, and it holds by Lemma \ref{lemmasurgeryscob} and the $s$-cobordism theorem when $p=1$. Suppose that the claim is true for $p-1\geq 0$. By the induction hypothesis, we can find diffeomorphisms $\phi_i: \partial_iW\overset{\cong}\longrightarrow_\Delta M\times \Delta^{p-1}$ for $0\leq i\leq p-1$ such that $\partial_i\phi_j=\partial_{j-1}\phi_{i}$ for $0\leq i<j\leq p-1$. By pasting these together, we obtain a (face-preserving) diffeomorphism $\Lambda_p(\phi): \Lambda_p(W)\overset{\cong}\longrightarrow_\Delta M\times \Lambda_p^{p}$. Now by Lemma \ref{lemmasurgeryscob} and the inclusion-exclusion principle (\ref{whiteheadtorsincexc}), the inclusion $\Lambda_p(W)\xhookrightarrow{} W$ is a simple homotopy equivalence. Unbending the corners of $\Lambda_p(W)$, the $s$-cobordism theorem for manifolds with boundary (Theorem \ref{scobthm}) provides a face-preserving diffeomorphism $\phi: W\overset{\cong}\longrightarrow_\Delta M\times\Delta^p$ extending $\Lambda_p(\phi)$, as required. 
\end{proof}

By analogy to (\ref{blockmodspacebdiff}), we define $B\bdiffh(M)$ to be the path-component of $M$ in $\widetilde{\mathcal M}^h$, and so obtain a decomposition 
\begin{equation}\label{connectedcompsofMh}
\widetilde{\mathcal M}^h=\bigsqcup_{\substack{[M^d] \ \text{up}\\ \text{to $h$-cob.}}} B\bdiffh(M).
\end{equation}
\begin{rem}
The semi-simplicial sets $B\bdiff(M)_\bullet$ and $B\bdiffh(M)_\bullet$ have $M$ as their natural basepoint. If $M$ and $M'$ are $s$-cobordant, i.e. diffeomorphic (resp. $h$-cobordant), then $B\bdiff(M)$ and $B\bdiff(M')$ (resp. $B\bdiffh(M)$ and $B\bdiffh(M')$) are the same semi-simplicial set but equipped with different basepoints. 
\end{rem}

\begin{digr}\label{bdiffbounded}
Let $G: \mathsf{sSet}_*\to \mathsf{sGrp}$ denote the \textit{Kan simplicial loop space functor} \cite[$\S$10 and 11]{KanLoopGroup}. As we will see in Remark \ref{degeneraciesBdiffh}, the semi-simplicial set $B\bdiffh(M)_\bullet$ can be upgraded to a simplicial set. The simplicial group $\bdiffh(M):=GB\bdiffh(M)$ has been studied in previous literature under different names \cite[Appendix 5]{WWI}. More precisely, if $d\geq 5$ then $\bdiffh(M^d)$ is weakly equivalent to $\bdiffb(M\times \R)$, the space of block diffeomorphisms of $M\times \R$ \textit{bounded} in the $\R$-direction. This will be proved in Proposition \ref{appendixBprop} of Appendix \ref{appendixB}. With this in mind, the computation of the homotopy groups of $\bdiffb(M\times\R)/\bdiff(M)$ in \cite[Cor. 5.5]{WWI} agrees with Theorem \ref{ThmB}.
\end{digr}

\subsection{\texorpdfstring{A simplicial model for $H(-)_{hC_2}$}{A simplicial model for H(-)hC2}}
Let $A$ be a $\Z[C_2]$-module, i.e., an abelian group equipped with a $\Z$-linear involution $a\mapsto a^*:=t\cdot a$, where $t\in C_2$ denotes the generator. Write $HA$ for the \textit{Eilenberg--MacLane spectrum} associated to $A$, and let $HA_{hC_2}:=HA\wedge_{C_2}(EC_2)_+$ denote the homotopy $C_2$-orbits of $HA$. In this section we present a simplicial model
$$
\Falg_\bullet(-): \mathsf{Mod}_{\Z[C_2]}\longrightarrow \mathsf{sAb}
$$
for the functor $H(-)_{hC_2}: \mathsf{Mod}_{\Z[C_2]}\longrightarrow H\Z\text{-$\mathsf{Mod}$}$ in the following sense. 
\begin{thm}\label{ActualThmC}
Let $\Omega^\infty\text{-$\mathsf{Top}$}$ denote the category of infinite loop spaces. There is a natural equivalence
$$
|\Falg_\bullet(-)|\simeq \Omega^\infty(H(-)_{hC_2}):\mathsf{Mod}_{\Z[C_2]}\longrightarrow \Omega^\infty\text{-$\mathsf{Top}$},
$$
i.e., there is a zig-zag of natural weak equivalences connecting the left and the right hand functors.
\end{thm}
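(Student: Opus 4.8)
The plan is to reduce the statement, via the Dold--Kan correspondence, to a computation in the homological algebra of $\Z[C_2]$-modules. Since $\Falg_\bullet(A)$ is a simplicial abelian group, $|\Falg_\bullet(A)|$ is naturally a topological abelian group and hence an infinite loop space; under the standard chain of equivalences relating simplicial abelian groups, connective chain complexes of abelian groups, and connective $H\Z$-module spectra, it is identified with $\Omega^\infty$ of the generalised Eilenberg--MacLane spectrum $\mathbb H(N\Falg_\bullet(A))$ built from the normalised Moore complex. On the other hand $HA_{hC_2}$ is connective with $\pi_n(HA_{hC_2})\cong H_n(C_2;A)$, and is modelled as an $H\Z$-module by the complex $C_*(C_2;A)$ obtained by applying $A\otimes_{\Z[C_2]}(-)$ to a free resolution of the trivial module $\Z$ over $\Z[C_2]$, for instance the $2$-periodic resolution $\cdots\xrightarrow{1+t}\Z[C_2]\xrightarrow{1-t}\Z[C_2]\xrightarrow{1+t}\Z[C_2]$. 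As both constructions are functorial in $A$, it therefore suffices to produce a natural zig-zag of quasi-isomorphisms $N\Falg_\bullet(A)\simeq C_*(C_2;A)$ and then apply $\Omega^\infty$.

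The core step is to unwind the definition of $\Falg_\bullet(-)$ and exhibit $\Falg_\bullet(A)$ as $A\otimes_{\Z[C_2]}P_\bullet$ for an explicit \emph{free} simplicial $\Z[C_2]$-module $P_\bullet$ (independent of $A$) whose normalisation $NP_\bullet$ is a free resolution of $\Z$; equivalently, $P_\bullet=\Z[E_\bullet]$ for a free contractible simplicial $C_2$-set $E_\bullet$ modelling $EC_2$. Here $A$ enters only through the coefficient module, with its given involution $a\mapsto a^*$, so naturality in $A$ is immediate. One then checks that in each degree the subgroup spanned by the degeneracies is a $\Z[C_2]$-module direct summand, so that the additive functor $A\otimes_{\Z[C_2]}(-)$ commutes with normalisation and $N\Falg_\bullet(A)\cong A\otimes_{\Z[C_2]}NP_\bullet$.

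Since any two free resolutions of $\Z$ over $\Z[C_2]$ are $\Z[C_2]$-chain-homotopy equivalent, comparing $NP_\bullet$ with the $2$-periodic resolution above and tensoring with $A$ yields the desired natural zig-zag $N\Falg_\bullet(A)\simeq C_*(C_2;A)$; in particular $H_n(N\Falg_\bullet(A))\cong\mathrm{Tor}_n^{\Z[C_2]}(\Z,A)=H_n(C_2;A)=\pi_n(HA_{hC_2})$. A natural zig-zag of quasi-isomorphisms between connective chain complexes of abelian groups induces a natural zig-zag of weak equivalences of the associated $H\Z$-module spectra, and $\Omega^\infty$ carries this to a natural zig-zag of weak equivalences of infinite loop spaces; combined with the identifications $|\Falg_\bullet(A)|\simeq\Omega^\infty\mathbb H(N\Falg_\bullet(A))$ and $\mathbb H(C_*(C_2;A))\simeq HA_{hC_2}$ of the first paragraph, this gives the claimed natural equivalence $|\Falg_\bullet(-)|\simeq\Omega^\infty(H(-)_{hC_2})$.

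I expect the only real work to be the combinatorial step of the second paragraph: writing $\Falg_\bullet(A)$ explicitly in the form $A\otimes_{\Z[C_2]}P_\bullet$, checking that the face and degeneracy maps of $\Falg_\bullet$ genuinely assemble $P_\bullet$ into a free resolution of $\Z$ with the involution and sign conventions lining up, and verifying the compatibility of normalisation with $A\otimes_{\Z[C_2]}(-)$. Once that identification is in hand, the passage to $HA_{hC_2}$ and to infinite loop spaces is formal.
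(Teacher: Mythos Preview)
Your strategy matches the paper's: reduce via Dold--Kan to a comparison of $N\Falg_\bullet(A)$ with the standard complex computing $H_*(C_2;A)$, compare free resolutions of $\Z$ over $\Z[C_2]$, and then pass back through $\Gamma$, geometric realisation, and the usual identifications with $\Omega^\infty H(-)$. The paper carries out your deferred ``combinatorial step'' not by exhibiting a simplicial $P_\bullet$ with $\Falg_\bullet(A)\cong A\otimes_{\Z[C_2]}P_\bullet$, but by writing down the explicit chain map
\[
\psi^A_n\colon N\Falg(A)_n\longrightarrow A,\qquad \tau\longmapsto\tau(\Delta^{n+1},\langle 0\rangle),
\]
to the $2$-periodic complex $\mathbb{A}_{hC_2}$ and proving directly that it is an isomorphism; this identifies $N\Falg(A)$ with $A\otimes_{\Z[C_2]}M_\bullet$ for $M_\bullet$ the \emph{minimal} free resolution, and the zig-zag then runs $\Falg_\bullet(A)\to\Gamma\mathbb{A}_{hC_2}\leftarrow\underline{A}_{hC_2}$. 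One small correction to your outline: the $P_\bullet$ this produces (namely $\Gamma_{\Z[C_2]}M_\bullet$) is levelwise $\Z[C_2]$-free but is \emph{not} of the form $\Z[E_\bullet]$ for a simplicial $C_2$-set, since its structure maps involve sums such as $1\pm t$; so your ``equivalently, $P_\bullet=\Z[E_\bullet]$'' is a slight over-claim, though it does not affect the argument.
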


\subsubsection{The simplicial abelian group $\Falg_\bullet(A)$} We now define $\Falg_\bullet(A)$ as an \textit{algebraic analogue} of the semi-simplicial set $F_\bullet(M)$ (see (\ref{geomFddefn}) and Proposition \ref{Fdgeomvsalg}) when $A=\Wh(M)$ with the $C_2$-action $t\cdot \tau:=(-1)^{d-1}\overline \tau$ of Theorem \ref{ThmB}. We will need some preliminaries first.

A \textit{simplicial sub-complex} of $\Delta^p$ is a collection\footnote{We allow the empty collection $\emptyset=\{\hspace{3pt}\}$.} $K$ of non-empty subsets $\sigma$ of $[\hspace{1pt}p]=\{0,\dots,p\}$ such that if $\xi\subset \sigma$ and $\sigma\in K$, then $\xi\in K$ too. The \textit{realisation} of a subset $\sigma\subset [\hspace{1pt}p]$ is the subspace 
$$
|\sigma|:=\{(t_0,\dots,t_p)\in \Delta^p: \text{$t_i=0$ if $i\notin \sigma$} \}\subset \Delta^p.
$$
Then, the \textit{realisation} of a simplicial sub-complex $K$ of $\Delta^p$ is the subspace
$$
|K|:=\bigcup_{\sigma\in K}|\sigma|\subset \Delta^p.
$$
Since $|K|=|K'|$ if and only if $K=K'$, we will often identify a simplicial sub-complex $K$ of $\Delta^p$ with its realisation $|K|$.

Let $\SubComp_p$ denote the poset of simplicial complexes of $\Delta^p$, ordered by inclusion of collections. The assignment $[\hspace{1pt}p]\mapsto \SubComp_p$ assembles into a cosimplicial poset $\SubComp_\bullet$ in the obvious way: given an order-preserving arrow $a:[\hspace{1pt}p]\to [q]$, we set
$$
a: \SubComp_p\longrightarrow \SubComp_q, \quad K\mapsto a(K):=\{a(\sigma)\subset [q]: \sigma\in K\}.
$$
On realisations, we have $|a(K)|=a(|K|)\subset \Delta^q$, where the second $a$ now denotes the map $\Delta^p\to \Delta^q$ coming from the cosimplicial space $\Delta^\bullet$. So, for instance, the $i$-th coface map $\partial^{i}: \SubComp_{p-1}\to\SubComp_{p}$ identifies $\Delta^{p-1}$ with the $i$-th face of $\Delta^p$, i.e., $\partial^{i}\Delta^{p-1}:=\partial_i\Delta^p$.

More important to us will be the subposet $\subcomp_p\subset \SubComp_p$ consisting of those sub-complexes whose realisation is contractible (in particular non-empty). The assignment $[\hspace{1pt}p]\mapsto \subcomp_p$ now assembles only into a \emph{semi}-cosimplicial poset, for the coface maps $\partial^{i}: \SubComp_{p-1}\to\SubComp_{p}$ send $\subcomp_{p-1}$ to $\subcomp_p$, but the codegeneracies fail to do so: for instance, the codegeneracy $s^0: \SubComp_{3}\to\SubComp_{2}$ sends the contractible sub-complex of $\Delta^3$ cosisting of the edges $\{0,2\}$, $\{2,3\}$ and $\{1,3\}$ (and all its subsets) to $\partial\Delta^2$. 

Seen as a category, $\subcomp_p$ admits all pushouts, for the pushout of contractible simplicial complexes is also contractible. For an (abelian) group $A$ (seen as a one-object groupoid), write $\mathrm{Fun}(\subcomp_p, A)$ for the set of functors $\tau:\subcomp_p\to A$ and $\mathrm{Fun}^\square(\subcomp_p, {A})\subset \mathrm{Fun}(\subcomp_p,A)$ for the subset consisting of those functors $\tau$ such that for any pushout square
\begin{equation}\label{firstpushoutsquare}
\begin{tikzcd}
K_{01}\rar\dar\arrow[dr, phantom, "\usebox\pushout" , very near end, color=black]  &K_1\dar\\
K_0\rar &K
\end{tikzcd}
\end{equation}
in $\subcomp_p$:
\begin{equation}\label{funsquarecondition}
\tau(K_{01}\to K_1)=\tau(K_0\to K)\in A.
\end{equation}
Equivalently, by taking the transpose of (\ref{firstpushoutsquare}),
$$
\tau(K_{01}\to K_0)=\tau(K_1\to K).
$$

\begin{notn}
If $K,L\in \subcomp_p$ with $K\leq L$, there is a unique arrow $K\to L$. We will write $\tau(L,K)\in A$ for $\tau(K\to L)$ resembling the notation for $h$-cobordisms. Note that $\tau(K,K)=0$ for every $K$. We will sometimes refer to $\tau(L,K)$ as a \textbf{torsion element}.
\end{notn}

\begin{lem}\label{incexclemma}
Let $\tau:\subcomp_p\to A$ be a functor. Then $\tau\in \mathrm{Fun}^\square(\subcomp_p,A)$ if and only if for every diagram in $\subcomp_p$ of the form
\begin{equation}\label{pushoutdiagramsubcomp}
    \begin{tikzcd}
K_{01}\rar\dar\arrow[dr, phantom, "\usebox\pushout" , very near end, color=black]  &K_1\dar \arrow[ddr, bend left]&\\
K_0\arrow[drr, bend right=21pt]\rar &K\arrow[dr] &\\
&&L,
\end{tikzcd}
\end{equation}
the functor $\tau$ satisfies the inclusion-exclusion principle (compare to (\ref{whiteheadtorsincexc})):
\begin{equation}\label{incexcprinciple}
\tau(L,K)=\tau(L,K_0)+\tau(L,K_1)-\tau(L,K_{01}).
\end{equation}
\end{lem}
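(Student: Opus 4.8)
The plan is to derive both directions from the single structural fact that a functor $\tau\colon\subcomp_p\to A$ --- with $A$ regarded as a one-object groupoid --- is precisely an assignment $(K\le L)\mapsto\tau(L,K)\in A$ that is \emph{additive along chains}: $\tau(K,K)=0$, and $\tau(M,K)=\tau(M,L)+\tau(L,K)$ whenever $K\le L\le M$ (sums in the abelian group $A$, so the order is immaterial). This is the poset-level analogue of the composition rule (\ref{whiteheadtorsioncomposition}), and the computation below mirrors the derivation of (\ref{whiteheadtorsincexc}) from it.

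For the ``only if'' direction, suppose $\tau\in\mathrm{Fun}^\square(\subcomp_p,A)$ and consider a diagram of the form (\ref{pushoutdiagramsubcomp}). Applying additivity along the chains $K_0\le K\le L$ and $K_{01}\le K_1\le L$ one rewrites
\[
\tau(L,K_0)+\tau(L,K_1)-\tau(L,K_{01})=\tau(L,K)+\tau(K,K_0)-\tau(K_1,K_{01}).
\]
The inner square of (\ref{pushoutdiagramsubcomp}) is a pushout, so condition (\ref{funsquarecondition}) --- read in the form $\tau(K,K_0)=\tau(K_1,K_{01})$ --- makes the last two terms cancel, yielding exactly the inclusion-exclusion identity (\ref{incexcprinciple}).

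For the ``if'' direction, given an arbitrary pushout square in $\subcomp_p$, complete it to a diagram of the form (\ref{pushoutdiagramsubcomp}) by the tautological choice $L=K$, with the identity cocone $K\to K$. Then (\ref{incexcprinciple}) becomes $0=\tau(K,K)=\tau(K,K_0)+\tau(K,K_1)-\tau(K,K_{01})$, and substituting $\tau(K,K_{01})=\tau(K,K_1)+\tau(K_1,K_{01})$ (additivity along $K_{01}\le K_1\le K$) collapses it to $\tau(K_1,K_{01})=\tau(K,K_0)$, which is precisely (\ref{funsquarecondition}).

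I expect no genuine obstacle here: each direction is a two-line manipulation, and could equally be packaged as one chain of equalities run forwards and backwards. The only points that deserve a word are that the inner square of (\ref{pushoutdiagramsubcomp}) really is a pushout (so that (\ref{funsquarecondition}) is applicable in the first direction) and that $L=K$ is a legitimate instance of (\ref{pushoutdiagramsubcomp}) (so that every pushout square is reached by the hypothesis in the second direction); both are immediate from the definitions.
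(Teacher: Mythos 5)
Your proof is correct and matches the paper's own argument essentially verbatim: both directions use additivity along chains (functoriality), apply (\ref{funsquarecondition}) in the form $\tau(K,K_0)=\tau(K_1,K_{01})$ for the forward direction, and specialise to $L=K$ for the converse.
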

\begin{proof}
If $\tau\in \mathrm{Fun}^\square(\subcomp_p,A)$, then
\begin{align*}
    \tau(L,K_0)+\tau(L,K_1)-\tau(L,K_{01})&=\tau(L,K)+\tau(K,K_0)+\tau(L,K_1)-\tau(L,K_1)-\tau(K_1,K_{01})\\
    &=\tau(L,K),
\end{align*}
where the second line follows from (\ref{funsquarecondition}). Conversely (\ref{funsquarecondition}) follows from the inclusion-exclusion principle (\ref{incexcprinciple}) applied to the diagram (\ref{pushoutdiagramsubcomp}) with $L=K$, noting that $\tau(K,K)=0$.
\end{proof}

Observe that both $\mathrm{Fun}(\subcomp_p,A)$ and $\mathrm{Fun}^\square(\subcomp_p,A)$ are abelian groups under morphism-wise addition. Therefore, $\mathrm{Fun}(\subcomp_\bullet,A)$ defines a semi-simplicial abelian group whose $i$-th face map is $\partial^{\mathrm{Fun}}_i:=\mathrm{Fun}(\partial^{i}, A)$. These face maps clearly descend to $\mathrm{Fun}^\square(\subcomp_\bullet,A)$, and we will write $\partial_i^\square\equiv\partial_i^{\mathrm{Fun}}\mid_{\mathrm{Fun}^\square}$ for their restriction. We will now construct a system of degeneracies $s_i^\square$ for $\mathrm{Fun}^\square(\subcomp_\bullet,A)$ compatible with the $\partial_i^\square$'s which makes it into a simplicial abelian group---this will be handy when invoking the Dold--Kan correspondence later in Section \ref{DoldKanSection} (see also Remark \ref{technicalrem}).

\begin{rem}
    One might consider the abelian groups $\mathrm{Fun}(\SubComp_p,A)$ and $\mathrm{Fun}^\square(\SubComp_p,A)$, defined analogously. Then, $\mathrm{Fun}(\SubComp_\bullet,A)$ defines an actual simplicial abelian group whose $i$-th face and degeneracy maps are $\partial_i^{\Fun}$ and $s_i^\Fun=\Fun(s^i,A)$, respectively. However, $\mathrm{Fun}^\square(\SubComp_\bullet,A)$ is only semi-simplicial: for any non-zero $\tau\in \mathrm{Fun}^\square(\SubComp_1,A)$, $s_1^{\mathrm{Fun}}\tau$ does not satisfy (\ref{funsquarecondition}) for $K=\Lambda_2^2$, $K_0=\partial_0\Delta^2$, $K_1=\partial_1\Delta^2$ and $K_{01}=\langle2\rangle$.
\end{rem}

Let $\face_\bullet$ denote the sub-\emph{cosimplicial} poset of $\SubComp_\bullet$ consisting of those sub-complexes of $\Delta^\bullet$ which are \textit{faces}---those collections $K$ for which there exists some $\sigma\subset[\bullet]$ such that $\xi\in K$ for every $\xi\subset \sigma$; thus, the empty sub-complex $\emptyset\subset \Delta^\bullet$ is \emph{not} a face. Note that $\face_\bullet\subset\subcomp_\bullet$ and write $\iota$ for the inclusion. The following result says that a functor in $\mathrm{Fun}^\square(\subcomp_\bullet,A)$ is completely determined by the torsion elements corresponding to face inclusions.

\begin{prop}\label{SubFacelem}
There is a natural isomorphism of semi-simplicial abelian groups
$$
\iota^*: \mathrm{Fun}^\square(\subcomp_\bullet, A)\cong \mathrm{Fun}(\face_\bullet, A): \iota_!.
$$
Therefore, $\mathrm{Fun}^\square(\subcomp_\bullet, A)$ upgrades to a simplicial abelian group with degeneracy maps
$$
s_i^\square:=\iota_!\circ\mathrm{Fun}(s^{i},A)\circ\iota^*: \mathrm{Fun}^\square(\subcomp_\bullet,A)\to \mathrm{Fun}^\square(\subcomp_{\bullet+1},A).
$$
\end{prop}
\begin{proof}
    We begin with the definition of the map $\iota_!$. For $\tau\in \mathrm{Fun}(\face_p,A)$, let us first define $\iota_!\tau(L,K)$ for any inclusion of sub-complexes $K\subset L\subset \Delta^p$ in $\subcomp_p$. Setting
\begin{equation}\label{functorialityofiota!}
\iota_!\tau(L,K):=\iota_!\tau(\Delta^p,K)-\iota_!\tau(\Delta^p,L),
\end{equation}
it suffices to specify $\iota_!\tau(\Delta^p,K)$ for any $K\in \subcomp_p$. Note that, as defined in \eqref{functorialityofiota!}, $\iota_!\tau$ is immediately a functor $\subcomp_p\to A$ (even for arbitrary values of $\iota_!\tau(\Delta^p,K)$ for $K\subsetneq \Delta^p$), since if $K\subset L\subset M$ are sub-complexes of $\Delta^p$, then
\begin{align*}
\iota_!\tau(M,K)&=\iota_!\tau(\Delta^p,K)-\iota_!\tau(\Delta^p,M)\\
&=\iota_!\tau(\Delta^p,K)-\iota_!\tau(\Delta^p,L)+\iota_!\tau(\Delta^p,L)-\iota_!\tau(\Delta^p,M)\\
&=\iota_!\tau(L,K)+\iota_!\tau(M,L).
\end{align*}
We now define $\iota_!\tau(\Delta^p,K)$ for \emph{any\footnote{Not only contractible ones!}} sub-complex $K$ of $\Delta^p$ as 
\begin{equation}\label{IotaTauDefn}
\iota_!\tau(\Delta^p, K):=\sum_{\sigma\in K}T(\Delta^p,\sigma), \quad \text{where}\quad T(\Delta^p,\sigma):=\sum_{\emptyset\neq\xi\subseteq \sigma}(-1)^{\dim \sigma+\dim \xi} \tau(\Delta^p,\xi).    
\end{equation}
Here $\dim\sigma$ denotes the dimension of the face $\sigma$ (one less than the cardinality of $\sigma$ viewed as a subset of $[\hspace{1pt}p]$). We now establish the following properties of the functor $\iota_!\tau$:

\begin{enumerate}[itemsep=4pt]
    \item[(i)]  $\iota_!\tau$ satisfies the inclusion-exclusion principle \eqref{incexcprinciple} and hence, by Lemma \ref{incexclemma}, $\iota_!\tau$ restricted to $\subcomp_p$ is an element of $\Fun^\square(\subcomp_p,A)$.

    \item[(ii)] $\iota^*\iota_!\tau\equiv\tau$ as functors $\face_p\to A$.

    \item[(iii)] For every $0\leq i\leq p$, $\partial_i(\iota_!\tau)\equiv \iota_!(\partial_i\tau)$ as functors $\subcomp_{p-1}\to A$.
\end{enumerate}

To establish property (i), note that, in light of \eqref{functorialityofiota!}, it suffices to show that $\iota_!\tau$ satisfies the inclusion-exclusion for diagrams (\ref{pushoutdiagramsubcomp}) where $L=\Delta^p$. This follows by a simple check:
\begin{align*}
    \iota_!\tau(\Delta^p,K)&= \sum_{\sigma\in K}T(\Delta^p,\sigma)\\
    &= \sum_{\sigma\in K_0}T(\Delta^p,\sigma)+\sum_{\sigma\in K\setminus K_0}T(\Delta^p,\sigma)\\
    &=\sum_{\sigma\in K_0}T(\Delta^p,\sigma)+\sum_{\sigma\in K_1\setminus K_{01}}T(\Delta^p,\sigma)\\
    &=\sum_{\sigma\in K_0}T(\Delta^p,\sigma)+\sum_{\sigma\in K_1}T(\Delta^p,\sigma)-\sum_{\sigma\in K_{01}}T(\Delta^p,\sigma)\\
    &=\iota_!\tau(\Delta^p,K_0)+\iota_!\tau(\Delta^p,K_1)-\iota_!\tau(\Delta^p,K_{01}).
\end{align*}

To prove (ii), it suffices yet again to show that $\iota_!\tau(\Delta^p,\sigma)=\tau(\Delta^p,\sigma)$ for every face $\sigma\in \face_p$. Given a linear combination $X$ of $\tau(\Delta^p,\xi)$'s for $\xi\in \face_p$, let $X_{(\eta)}$ denote the coefficient of $\tau(\Delta^p,\eta)$ in $X$. We now count such coefficient $\iota_!\tau(\Delta^p,\sigma)_{(\eta)}$ for any face $\eta\in \face_p$. Clearly $\iota_!\tau(\Delta^p,\sigma)_{(\eta)}=0$ if $\eta$ is not contained in $\sigma$, so assume that $\eta\subset \sigma$. Then,
\begin{align*}
    \iota_!\tau(\Delta^p,\sigma)_{(\eta)}&=\sum_{\xi\in \sigma}T(\Delta^p,\xi)_{(\eta)}=\sum_{\eta\subset \xi\in \sigma}T(\Delta^p,\xi)_{(\eta)}=\sum_{\eta\subset \xi\in \sigma}(-1)^{\dim \xi+\dim\eta}\\
    &=\sum_{i=0}^{\dim\sigma-\dim\eta}(-1)^i\binom{\dim\sigma-\dim\eta}{i}=\left\{
\begin{array}{cl}
    1, & \dim\eta=\dim \sigma, \\
     0,& \text{otherwise.} 
\end{array}
    \right.
\end{align*}
Since $\dim\eta=\dim\sigma$ if and only if $\eta=\sigma$, it follows that $\iota_!\tau(\Delta^p,\sigma)=\tau(\Delta^p,\sigma)$, as claimed.

For (iii), again we only need to show that both functors $\partial_i(\iota_!\tau)$ and $\iota_!(\partial_i\tau)$ agree on arrows in $\subcomp_{p-1}$ of the form $K\to \Delta^{p-1}$. Then, 
\begin{align*}
    \iota_!(\partial_i\tau)(\Delta^{p-1},K)&=\sum_{\sigma\in K}\sum_{\emptyset\neq\xi\subset \sigma}(-1)^{\dim\sigma+\dim\xi}\partial_i\tau(\Delta^{p-1},\xi)\\
    &=\sum_{\sigma\in K}\sum_{\emptyset\neq\xi\subset \sigma}(-1)^{\dim\sigma+\dim\xi}\tau(\partial^i\Delta^{p-1},\partial^i\xi)\\
    &=\sum_{\sigma\in K}\sum_{\emptyset\neq\xi\subset \sigma}(-1)^{\dim\sigma+\dim\xi}\left(\tau(\Delta^{p},\partial^i\xi)-\tau(\Delta^p,\partial^i\Delta^{p-1})\right)\\
    &=\iota_!\tau(\Delta^p,\partial^i K)-\left(\sum_{\sigma\in K}\sum_{\emptyset\neq\xi\subset\sigma}(-1)^{\dim\sigma+\dim\xi}\right)\cdot\tau(\Delta^p,\partial^i\Delta^{p-1})
\end{align*}
Now by a similar consideration as in (ii), the sum $\sum_{\emptyset\neq\xi\subset\sigma}(-1)^{\dim\sigma+\dim\xi}$ is $(-1)^{\dim\sigma}$ (as we are not summing over $\xi=\emptyset$). Thus, using this and property (ii), we see that
\begin{equation}\label{EulerCharEq}
\iota_!(\partial_i\tau)(\Delta^{p-1},K)=\iota_!\tau(\Delta^p,\partial^i K)-\chi_K\cdot \iota_!\tau(\Delta^p,\partial^i\Delta^{p-1}),
\end{equation}
where $\chi_K$ stands for the Euler characteristic of $K$. As \emph{$K$ is contractible} (so $\chi_K=1$), we obtain
$$
\iota_!(\partial_i\tau)(\Delta^{p-1},K)=\iota_!\tau(\partial^i\Delta^{p-1},\partial^i K)=\partial_i(\iota_!\tau)(\Delta^{p-1},K),
$$
as desired.

By definition, $\iota_!: \Fun(\face_p,A)\to \Fun^\square(\subcomp_p, A)$ is clearly a group homomorphism, so, by (iii), we have successfully constructed a morphism of semi-simplicial abelian groups
$$
\iota_!: \mathrm{Fun}(\face_\bullet, A)\longrightarrow \mathrm{Fun}^\square(\subcomp_\bullet, A),
$$
and by (ii), we also have that $\iota_!$ is a right inverse of $\iota^*$. To finish the proof of the proposition, it only remains to show that $\iota^*$ is injective.

To this end, suppose that $\tau\in \Fun^\square(\subcomp_p,A)$ is a functor such that $\iota^*\tau\equiv 0$. We show that $\tau(\Delta^p,K)=0$ for every $K\in \subcomp_p$ (and hence $\tau\equiv 0$), by induction on the \textit{dimension} of $K$, that is, the maximal dimension of a face of $\Delta^p$ contained in $K$. This is clear if $\dim K=0$, so assume the claim holds for every $K'\in \subcomp_p$ of dimension $\leq j-1$, and let $K\in \subcomp_p$ be of dimension $j\geq 1$. Suppose that there exists a $j$-dimensional face $\sigma\subset K$ and some $0\leq i\leq j$ such that $\partial_i\sigma$ is not contained in any other $j$-dimensional face $\sigma'$ of $K$. Then, consider the sub-complex $K':=K\setminus(\operatorname{Int}\sigma\cup \operatorname{Int}\partial_i\sigma)$ and the diagram
$$
\begin{tikzcd}
\Lambda_i(\sigma)\rar\dar\arrow[dr, phantom, "\usebox\pushout" , very near end, color=black]  &K'\dar \arrow[ddr, bend left]&\\
\sigma\arrow[drr, bend right=21pt]\rar &K\arrow[dr] &\\
&&\Delta^p.
\end{tikzcd}
$$
Since $\Lambda_i(\sigma)\to\sigma$ is an equivalence and the square is a pushout, it follows that $K'\in \subcomp_p$. Then, by induction on the number of $j$-dimensional faces of $K$, we may assume that $\tau(\Delta^p,K')=0$, and hence, by the inclusion-exclusion principle (\ref{incexcprinciple}) for the above diagram,
$$
\tau(\Delta^p,K)=\tau(\Delta^p,\sigma)+\tau(\Delta^p,K')-\tau(\Delta^p,\Lambda_i(\sigma))=0+0-0=0.
$$
The penultimate equality follows from our assumption $\iota^*\tau\equiv 0$ and from our induction hypothesis as $\Lambda_i(\sigma)$ is $(j-1)$-dimensional. But such $\sigma$ and $0\leq i\leq j$ must always exist. For if it did not, then 
$$
M:=\bigcup_{\sigma\in K:\ \dim\sigma=j}\sigma
$$
would be a non-empty, closed PL-manifold of dimension $j$, and hence $H_j(M;\Z/2)\neq 0$. But since $K$ is obtained from $M$ by attaching faces of dimension $\leq j-1$, then $H_j(M;\Z/2)\xhookrightarrow{}H_j(K;\Z/2)$, leading to a contradiction as $K$ is contractible. This concludes the proof of the proposition.
\end{proof}

\begin{rem}
    As part of the proof, we see that, for each $p\geq 0$, the group homomorphism $\iota_!:\Fun(\face_p,A)\to \Fun^\square(\subcomp_p,A)$ actually factors through $\Fun^\square(\SubComp_p,A)$. However, 
    $$
\iota_!: \Fun(\face_\bullet,A)\longrightarrow \Fun^\square(\SubComp_\bullet,A)
    $$
    does \textit{not} assemble to a morphism of semi-simplicial abelian groups by (\ref{EulerCharEq}); surprisingly, though, if $\SubComp_\bullet^{\chi = 1}$ denotes the sub-poset of those sub-complexes with Euler characteristic one, then
    $$
\iota_!: \Fun(\face_\bullet,A)\longrightarrow \Fun^\square(\SubComp_\bullet^{\chi = 1},A)
    $$
    is indeed a semi-simplicial isomorphism that factors the one of Proposition \ref{SubFacelem}.
\end{rem}

A functor $\tau\in\mathrm{Fun}(\subcomp_p,A)$ will be said to satisfy \textit{face-horn duality} for $\sigma$ if
\begin{equation}\label{facehornduality}
    \tau(\sigma, \partial_i\sigma)=(-1)^{\dim\sigma}\tau^*(\sigma,\Lambda_i(\sigma)), \quad i=0,\dots,\dim\sigma.
\end{equation}
In the above notation, $\tau^*(L,K)$ stands for $(\tau(L,K))^*$. Write $D_p(A)\subset \mathrm{Fun}(\subcomp_p,A)$ for the subgroup of functors that satisfy face-horn duality for every face $\sigma\subset \Delta^p$, and let $D_\bullet(A)\subset \mathrm{Fun}(\subcomp_\bullet,A)$ denote the corresponding sub-semi-simplicial abelian group.

\begin{rem}
If $\tau\subset D_p(A)\cap\mathrm{Fun}^\square(\subcomp_p,A)$, then $\tau$ satisfies more general sorts of dualities. For instance, if $\sigma\subset\Delta^p$ is a face and $0\leq i<j\leq \dim\sigma$, then
$$
\tau(\sigma, \partial_i\sigma\cup\partial_j\sigma)=(-1)^{\dim\sigma}\tau^*(\sigma,\partial\sigma\setminus\operatorname{Int}(\partial_i\sigma\cup\partial_j\sigma)).
$$
This follows from the inclusion-exclusion principle (\ref{incexcprinciple}) of Lemma \ref{incexclemma} applied to $K=\partial_i\sigma\cup\partial_j\sigma$ and $\Lambda_i(\sigma)=(\partial\sigma\setminus \operatorname{Int}(\partial_i\sigma\cup\partial_j\sigma))\cup\partial_j\sigma$, and $L=\sigma$. By induction, one can generalise this duality to any proper collection of faces $\partial_I\sigma:=\bigcup_{i\in I}\partial_i\sigma$, $I\subsetneq\{0,\dots,\dim\sigma\}$:
\begin{equation}\label{generalisedfacehornduality}
\tau(\sigma,\partial_I\sigma)=(-1)^{\dim\sigma}\tau^*(\sigma,\partial_J\sigma), \quad J:=\{0,\dots,\dim\sigma\}\setminus I.
\end{equation}
Even more generally, if $K\in\subcomp_p$ is a union of $k$-dimensional faces and $Q\subset \partial K$ is a contractible sub-complex which is a union of $(k-1)$-dimensional faces, then
\begin{equation}\label{generalisedfacehornduality2}
\tau(K,Q)=(-1)^{k}\tau^*(K,\overline{\partial K\setminus Q}).
\end{equation}
\end{rem}
We now give a simple inductive criterion to check if a functor satisfies all face-horn dualities.
\begin{lem}\label{dualitycriterionlem}
Let $\tau\in \mathrm{Fun}^{\square}(\subcomp_p,A)$ satisfy face-horn duality for all $\sigma\subsetneq \Delta^p$ and for the $0$-th face-horn of $\Delta^p$:
$$
\tau(\Delta^p,\partial_0\Delta^p)=(-1)^{p}\tau^*(\Delta^{p},\Lambda^p_0).
$$
Then $\tau$ satisfies face-horn duality for $\Delta^p$ too, i.e., $\tau\in D_p(A)$.
\end{lem}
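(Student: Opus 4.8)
The plan is to show that the element
$\delta_i := \tau(\Delta^p,\partial_i\Delta^p) - (-1)^p\,\tau^*(\Delta^p,\Lambda_i^p)\in A$
is independent of $i\in\{0,\dots,p\}$. Since $\delta_0=0$ is precisely the hypothesis, this will force $\delta_i=0$ for every $i$, i.e.\ $\tau$ satisfies face-horn duality for $\Delta^p$ itself; combined with the assumed duality for all $\sigma\neq\Delta^p$ this is exactly the assertion $\tau\in D_p(A)$. One may assume $p\geq 2$, the case $p\leq 1$ being trivial: for $p=1$ the horns $\Lambda_0^1=\partial_1\Delta^1$ and $\Lambda_1^1=\partial_0\Delta^1$ are the two vertices, so the two face-horn duality conditions for $\Delta^1$ are exchanged by the involution $a\mapsto a^*$ of $A$ and hence equivalent.

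Fixing $i\neq j$, the sub-complexes I would work with are the ``double horn'' $\Lambda_{ij}^p:=\bigcup_{k\neq i,j}\partial_k\Delta^p$, the facet intersection $\sigma_{ij}:=\partial_i\Delta^p\cap\partial_j\Delta^p$, and $R_{ij}:=\Lambda_{ij}^p\cap\partial_j\Delta^p=\bigcup_{k\neq i,j}(\partial_k\Delta^p\cap\partial_j\Delta^p)$, together with $R_{ji}$ defined symmetrically; all of these are contractible (e.g.\ $\sigma_{ij}$ is a simplex, while $\Lambda_{ij}^p$, $R_{ij}$ and $R_{ji}$ are cones on a vertex), hence lie in $\subcomp_p$. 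Two structural observations would drive the argument. First, $\Lambda_i^p=\Lambda_{ij}^p\cup\partial_j\Delta^p$ is a pushout square along $R_{ij}$. Second, inside $\partial_j\Delta^p\cong\Delta^{p-1}$ the face $\sigma_{ij}$ is a facet and $R_{ij}$ is exactly the horn of $\partial_j\Delta^p$ complementary to it; as $\partial_j\Delta^p\neq\Delta^p$, face-horn duality for $\partial_j\Delta^p$ is available and reads $\tau(\partial_j\Delta^p,\sigma_{ij})=(-1)^{p-1}\tau^*(\partial_j\Delta^p,R_{ij})$, and symmetrically for $\partial_i\Delta^p$.

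From there the argument becomes bookkeeping of torsion elements, organised as two parallel computations whose outputs are then matched. On one side, I would feed the pushout $\Lambda_i^p=\Lambda_{ij}^p\cup\partial_j\Delta^p$ and its $i\leftrightarrow j$ twin into the inclusion--exclusion principle of Lemma~\ref{incexclemma} (with $L=\Delta^p$) and subtract; the $\Lambda_{ij}^p$-terms cancel and one is left with $\tau(\Delta^p,\Lambda_i^p)-\tau(\Delta^p,\Lambda_j^p)$ expressed purely through $\tau$ evaluated on $\partial_i\Delta^p,\partial_j\Delta^p,R_{ij},R_{ji}$. On the other side, functoriality of $\tau$ along $\sigma_{ij}\subset\partial_i\Delta^p\subset\Delta^p$ (and along the $j$-analogue) gives $\tau(\Delta^p,\partial_i\Delta^p)-\tau(\Delta^p,\partial_j\Delta^p)=\tau(\partial_j\Delta^p,\sigma_{ij})-\tau(\partial_i\Delta^p,\sigma_{ij})$; applying the facet-horn duality above to each term and then functoriality along $R_{ij}\subset\partial_j\Delta^p\subset\Delta^p$ rewrites the right-hand side, again purely through $\tau^*$ on $\partial_i\Delta^p,\partial_j\Delta^p,R_{ij},R_{ji}$. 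Comparing the two expressions---they coincide after multiplying one by the overall sign $(-1)^p$---yields
\[
\tau(\Delta^p,\partial_i\Delta^p)-\tau(\Delta^p,\partial_j\Delta^p)=(-1)^p\bigl(\tau^*(\Delta^p,\Lambda_i^p)-\tau^*(\Delta^p,\Lambda_j^p)\bigr),
\]
that is, $\delta_i=\delta_j$, and the proof concludes.

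The hard part---and really the only subtle point---is recognising that one must \emph{not} apply inclusion--exclusion to the decomposition $\partial\Delta^p=\Lambda_i^p\cup\partial_i\Delta^p$, nor to $\partial\Delta^p$ in any guise, because neither $\partial\Delta^p$ nor the overlap $\Lambda_i^p\cap\partial_i\Delta^p\cong S^{p-2}$ is contractible, so $\tau$ is not defined on them at all. The way around this is to compare horn $i$ with horn $j$ indirectly, routing through the contractible auxiliaries $\Lambda_{ij}^p$ and $R_{ij}$, the essential point being that $R_{ij}$ is the horn of the \emph{proper} facet $\partial_j\Delta^p$ complementary to $\sigma_{ij}$---which is exactly the channel through which the hypothesis ``face-horn duality for all $\sigma\neq\Delta^p$'' propagates to the top cell.
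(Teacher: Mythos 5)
Your proof is correct, and while it uses the same combinatorial actors as the paper's (a double horn and the horns of facets obtained by intersecting), it reorganises the argument in a way that is, if anything, slightly cleaner. The paper fixes $j=0$ and establishes each $i$-th face-horn duality for $\Delta^p$ directly via a chain of equalities passing through $\Lambda_{0i}^p$; crucially, its third step invokes the ``even more general'' duality (\ref{generalisedfacehornduality2}) for $K=\Lambda_{0i}^p$ a \emph{union} of facets, which is outsourced to the preceding remark and, strictly read, is stated there under the stronger assumption $\tau\in D_p(A)$ (though it does hold under the lemma's hypothesis because $\Lambda_{0i}^p$ is a union of \emph{proper} faces). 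Your reformulation via the defect $\delta_i=\tau(\Delta^p,\partial_i\Delta^p)-(-1)^p\tau^*(\Delta^p,\Lambda_i^p)$ sidesteps this: by comparing $\delta_i$ and $\delta_j$ you only ever apply the face-horn duality hypothesis to the single facets $\partial_i\Delta^p$ and $\partial_j\Delta^p$ (literally as stated), combined with inclusion--exclusion on the pushout $\Lambda_i^p=\Lambda_{ij}^p\cup_{R_{ij}}\partial_j\Delta^p$ and functoriality along $\sigma_{ij}\subset\partial_k\Delta^p\subset\Delta^p$. The price is two parallel bookkeeping computations rather than one chain, and the hypothesis at index $0$ enters only at the very end to pin down the common value of the $\delta_i$. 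Both proofs are sound; yours is marginally more self-contained since it does not lean on the remark's extension of face-horn duality to unions of facets. One small point worth making explicit when you write this up: the double horn $\Lambda_{ij}^p$ (and more generally any proper nonempty union of facets of $\Delta^p$) is a cone on any vertex $v\notin\{i,j\}$, which is what places it in $\subcomp_p$ and licenses the use of Lemma~\ref{incexclemma}.
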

\begin{proof}
For $i\in \{1,\dots,p\}$ denote $\Lambda^p_{0i}$ for $\partial_{\{1,\dots,\widehat{i},\dots,p\}}\Delta^p$, and consider the two pushout diagrams in $\subcomp_p$
$$
\begin{array}{cc}
\begin{tikzcd}
\Lambda_{0}(\partial_i\Delta^p)\rar\dar\arrow[dr, phantom, "\usebox\pushout" , very near end, color=black]  &\Lambda_{0i}^p\dar \arrow[ddr, bend left]&\\
\partial_i\Delta^p\arrow[drr, bend right = 18pt]\rar &\Lambda_0^p\arrow[dr] &\\
&&\Delta^p,
\end{tikzcd}
   &  
\begin{tikzcd}
\Lambda_{i-1}(\partial_0\Delta^p)\rar\dar\arrow[dr, phantom, "\usebox\pushout" , very near end, color=black]  &\partial_0\Delta^p\dar \arrow[ddr, bend left]&\\
\Lambda_{0i}^p\arrow[drr, bend right = 17pt]\rar &\Lambda_i^p\arrow[dr] &\\
&&\Delta^p.
\end{tikzcd} 
\end{array}
$$
Note that $\Lambda_{i-1}(\partial_0\Delta^p)$ is the union of the codimension one sub-faces of $\partial_0\Delta^p=\langle1,\dots,p\rangle\subset \Delta^p$ that contain the $i$-th vertex of $\Delta^p$. We check directly that $\tau$ satisfies duality for the $i$-th face-horn using the inclusion-exclusion principle (\ref{incexcprinciple}).
\begin{align*}
\tau(\Delta^p,\partial_i\Delta^p)&=\tau(\Delta^p,\Lambda_0^p)+\tau(\Lambda_0^p, \partial_i\Delta^p)\\
&=(-1)^p\tau^*(\Delta^p,\partial_0\Delta^p)+\tau(\Lambda_{0i}^p,\Lambda_0(\partial_i\Delta^p))\\
&=(-1)^p\tau^*(\Delta^p,\Lambda_i^p)+(-1)^p\tau^*(\Lambda_i^p,\partial_0\Delta^p)+(-1)^{p-1}\tau^*(\Lambda_{0i}^p,\Lambda_{i-1}(\partial_0\Delta^p))\\
&=(-1)^p\tau^*(\Delta^p, \Lambda_i^p).
\end{align*}
In the third line we have used (\ref{generalisedfacehornduality2}) for $K=\Lambda_{0i}^p$ and $Q=\Lambda_0(\partial_i\Delta^p)$. 
\end{proof}

Finally, we write $Z_p(A)\subset \mathrm{Fun}(\subcomp_{p+1},A)$ for the subgroup of functors $\tau$ such that
$$
\tau(L,K)=0, \quad \forall\ K\subset L\subset \partial_0\Delta^{p+1}=\langle1,\dots,p+1\rangle.
$$
The assignment $[\hspace{1pt} p]\mapsto Z_p(A)$ defines a semi-simplicial abelian group $Z_\bullet(A)$ whose $i$-th face map is the restriction of $\partial_{i+1}^{\mathrm{Fun}}$ to $Z_p(A)$.

\begin{defn}
The simplicial abelian group $\Falg_\bullet(A)\subset \mathrm{Fun}(\subcomp_{\bullet+1}, A)$ has as $p$-simplices
$$
\Falg_p(A):=Z_p(A)\cap D_{p+1}(A)\cap\mathrm{Fun}^\square(\subcomp_{p+1},A),
$$
as face maps $\delta_i: \Falg_p(A)\to \Falg_{p-1}(A)$ the restriction to $\Falg_p(A)$ of $\partial_{i+1}^{\mathrm{Fun}}$, and as degeneracy maps $s_i: \Falg_p(A)\to \Falg_{p+1}(A)$ the restriction to $\Falg_p(A)$ of the map $s_{i+1}^{\square}$ from Proposition \ref{SubFacelem}.
\end{defn}

\begin{lem}\label{FalgIsSimplicial}
$\Falg_\bullet(A)$ as defined above is a simplicial abelian group.
\end{lem}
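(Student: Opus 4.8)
The plan is to realise $\Falg_\bullet(A)$ as a degreewise subgroup of the décalage of the simplicial abelian group $\mathrm{Fun}^\square(\subcomp_\bullet,A)$ provided by Lemma \ref{SubFacelem}. Recall the standard fact that, for any simplicial object $X_\bullet$, the reindexed diagram $[p]\mapsto X_{p+1}$ with faces $d_{i+1}$ and degeneracies $s_{i+1}$ ($0\le i\le p$) again satisfies all the simplicial identities. Applying this to $\mathrm{Fun}^\square(\subcomp_\bullet,A)$ produces a simplicial abelian group which in degree $p$ is $\mathrm{Fun}^\square(\subcomp_{p+1},A)$, with structure maps $\partial^{\mathrm{Fun}}_{i+1}$ and $s^\square_{i+1}$ --- precisely the maps used to define $\Falg_\bullet(A)$. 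Hence it is enough to check that both families $\partial^{\mathrm{Fun}}_{i+1}$ and $s^\square_{i+1}$ carry the subgroups $Z_\bullet(A)$ and $D_{\bullet+1}(A)$ into themselves: the restrictions to $\Falg_\bullet(A)$ are then well-defined group homomorphisms and automatically obey the simplicial identities. For the face maps there is nothing to prove, since $Z_\bullet(A)$ and $D_\bullet(A)$ are semi-simplicial subgroups of $\mathrm{Fun}(\subcomp_\bullet,A)$ and $\mathrm{Fun}^\square$ is plainly face-preserved; so only the two degeneracy statements remain.

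For $Z_\bullet(A)$: let $\sigma^{i+1}\colon\Delta^{p+2}\to\Delta^{p+1}$ denote the codegeneracy of $\Delta^\bullet$ inducing $s^{i+1}$. Since $i\ge 0$, the simplicial map $\sigma^{i+1}$ fixes the vertices $1,\dots,i+1$ and maps $i+2,\dots,p+2$ onto $i+1,\dots,p+1$, hence carries the face $\langle1,\dots,p+2\rangle$ into $\langle1,\dots,p+1\rangle$. Therefore, for $\tau\in Z_p(A)\cap\mathrm{Fun}^\square(\subcomp_{p+1},A)$, the functor $\mathrm{Fun}(s^{i+1},A)(\iota^*\tau)$ vanishes on every inclusion of faces contained in $\langle1,\dots,p+2\rangle$; applying the uniqueness half of Lemma \ref{SubFacelem} to the sub-simplex $\langle1,\dots,p+2\rangle$ shows that $s^\square_{i+1}\tau$ vanishes on all $K\subset L\subset\langle1,\dots,p+2\rangle$, i.e.\ $s^\square_{i+1}\tau\in Z_{p+1}(A)$.

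For $D_{\bullet+1}(A)$ --- the main obstacle --- I would prove by induction on $n$ that $s^\square_j$ maps $D_n(A)\cap\mathrm{Fun}^\square(\subcomp_n,A)$ into $D_{n+1}(A)$ for all $0\le j\le n$, the base case $n=0$ being trivial since $\mathrm{Fun}^\square(\subcomp_0,A)=0$. Given such a $\tau$, put $\rho:=s^\square_j\tau\in\mathrm{Fun}^\square(\subcomp_{n+1},A)$; by Lemma \ref{dualitycriterionlem} it suffices to verify face--horn duality of $\rho$ on every proper face of $\Delta^{n+1}$ and on the $0$-th face--horn of $\Delta^{n+1}$. If $\sigma\subsetneq\Delta^{n+1}$ is proper, pick a vertex $k$ not in $\sigma$; via the coface $\partial^k$, the face--horn dualities of $\rho$ for $\sigma$ coincide with those of $\partial^{\mathrm{Fun}}_k\rho$ for the corresponding face of $\Delta^n$, and the simplicial identities identify $\partial^{\mathrm{Fun}}_k\rho$ with $\tau$ itself or with $s^\square_{j'}(\partial^{\mathrm{Fun}}_{k'}\tau)$; as $\partial^{\mathrm{Fun}}_{k'}\tau\in D_{n-1}(A)\cap\mathrm{Fun}^\square(\subcomp_{n-1},A)$, the inductive hypothesis puts $\partial^{\mathrm{Fun}}_k\rho$ in $D_n(A)$, so all of these dualities hold.

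The remaining point --- duality for the $0$-th face--horn of the top simplex $\Delta^{n+1}$ --- is the technical heart and is a direct combinatorial computation. On one side, the combinatorics of $\sigma^j$ on faces gives $\rho(\Delta^{n+1},\partial_0\Delta^{n+1})=\tau(\Delta^n,\partial_0\Delta^n)$ when $j\ge 1$, which equals $(-1)^n\tau^*(\Delta^n,\Lambda_0^n)$ because $\tau\in D_n(A)$ (both sides vanish when $j=0$). On the other side, since all unions and intersections of $\partial_1\Delta^{n+1},\dots,\partial_{n+1}\Delta^{n+1}$ are contractible, iterating the inclusion--exclusion principle (\ref{incexcprinciple}) expresses $\rho(\Delta^{n+1},\Lambda_0^{n+1})$ as a signed sum of face values $\tau(\Delta^n,\sigma^j(-))$, in which only the two faces $\partial_j\Delta^{n+1},\partial_{j+1}\Delta^{n+1}$ have image all of $\Delta^n$ (hence contribute $0$); grouping the remaining terms by their $\sigma^j$-image shows the sum collapses to $-\tau(\Delta^n,\Lambda_0^n)$ (and to $0$ when $j=0$). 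Comparing the two sides yields $\rho(\Delta^{n+1},\partial_0\Delta^{n+1})=(-1)^{n+1}\rho^*(\Delta^{n+1},\Lambda_0^{n+1})$, which closes the induction. Finally, the abelian-group structure needs no comment, since $\Falg_\bullet(A)$ is a subgroup in each degree and all structure maps are homomorphisms.
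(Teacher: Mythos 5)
Your proof is correct and follows the same overall strategy as the paper: realise the degeneracies as $s^\square_{i+1}$, use Lemma~\ref{dualitycriterionlem} together with induction and the simplicial identities to reduce the $D$-preservation to the $0$-th face--horn of the top simplex, and then compute with the inclusion--exclusion principle. The one place where you improve on the paper is that you carry out the inclusion--exclusion computation for an arbitrary codegeneracy $\sigma^j$, correctly obtaining $\rho(\Delta^{n+1},\Lambda_0^{n+1})=-\tau(\Delta^n,\Lambda_0^n)$ and $\rho(\Delta^{n+1},\partial_0\Delta^{n+1})=\tau(\Delta^n,\partial_0\Delta^n)$ for $j\ge1$ (both vanishing for $j=0$), whereas the paper asserts ``without loss of generality $i=0$'' and then does the computation only for $s^\square_0$ (where both sides vanish); that WLOG needs a symmetry argument on $\subcomp_p$ which the paper leaves implicit, and moreover the degeneracies actually defining $\Falg_\bullet(A)$ are $s^\square_{i+1}$ with $i+1\ge1$, so the $j\ge1$ case you treat is the one genuinely used. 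Your décalage framing and your explicit check that $s^\square_{i+1}$ preserves the $Z_\bullet$-condition are also welcome additions, the latter being dismissed by the paper as ``non-trivial only for $D$''.
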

\begin{proof}
The only non-trivial thing to check is that $s_i$ sends $D_{p}(A)$ into $D_{p+1}(A)$, so let $\tau\in D_p(A)\cap \mathrm{Fun}^\square(\subcomp_p,A)$. Without loss of generality assume $i=0$, and by the induction hypothesis and the simplicial identities, it suffices to check that $s_0\tau$ satisfies face-horn duality for the top face $\Delta^{p+1}$. By Lemma \ref{dualitycriterionlem}, just checking this for the $0$-th face-horn of $\Delta^{p+1}$ will suffice. Since $s_0\tau$ satisfies the inclusion-exclusion principle,
\begin{alignat*}{3}
    s_0\tau(\Delta^{p+1},\Lambda_0^{p+1})&=\sum_{k=1}^{p+1}(-1)^{k-1}&&\sum_{0<j_1<\dots<j_k\leq p+1}s_0\tau\left(\Delta^{p+1}, \bigcap_{r=1}^k\partial_{j_r}\Delta^{p+1}\right)\\
    &=\sum_{k=1}^{p+1}(-1)^{k-1}\Bigg\{&&\sum_{1<j_1<\dots<j_k}\tau(\Delta^p, \langle 0,\dots,\widehat{j_1-1},\dots, \widehat{j_k-1},\dots, p+1\rangle)\\
     & &&+\sum_{1=j_1<j_2<\dots<j_k}\hspace{-4pt}\tau(\Delta^p, \langle 0,\dots,\widehat{j_2-1},\dots, \widehat{j_k-1},\dots, p+1\rangle)\Bigg\}\\
     &=(-1)^p&&\hspace{-40pt}\sum_{\underbrace{\scriptstyle 1<j_1<\dots<j_{p+1}\leq p+1}_{=\emptyset}}\tau(\Delta^p, \langle 0,\dots,\widehat{j_1-1},\dots, \widehat{j_k-1},\dots, p+1\rangle)=0.
\end{alignat*}
In the other hand, $s_0\tau(\Delta^{p+1},\partial_0\Delta^{p+1})=\tau(\Delta^p,\Delta^p)=0=(-1)^{p+1}(s_0\tau(\Delta^{p+1},\Lambda_0^{p+1}))^*$, as required.
\end{proof}

\subsubsection{Proof of Theorem \ref{ActualThmC}}\label{DoldKanSection}
Recall that the \textit{Dold--Kan correspondence} \cite[$\S$III.2, Cor. 2.3]{GoerssJardine} establishes an equivalence of categories
\begin{equation}\label{DoldKan}
\begin{tikzcd}
N: \mathsf{sAb}\rar[yshift={1ex}, "\adjunction"'] &\lar[yshift={-1ex}]\mathsf{Ch}_{\geq 0}(\Z):\Gamma,
\end{tikzcd}
\end{equation}
where $N$ is the \textit{normalised Moore complex} functor, given for a simplicial group $G=(G_\bullet,\delta_\bullet)$ by
$$
(NG)_n:=\bigcap_{i=1}^n\ker\left(\delta_i: G_n\to G_{n-1}\right), \quad d_n=\delta_0\mid_{(NG)_n}: (NG)_n\longrightarrow (NG)_{n-1}.
$$
Under (\ref{DoldKan}), we will identify $\Falg_\bullet(A)$ with the (connective) chain complex $\mathbb{A}_{hC_2}$ given by
$$
\begin{tikzcd}
\dots\rar["1-t"] &A\rar["1+t"] &A  \rar["1-t"] &A\rar&0=(\mathbb{A}_{hC_2})_{-1}.
\end{tikzcd}
$$

\begin{prop}\label{quasiisoprop}
The map $\psi_\bullet^{A}: (N\Falg(A)_\bullet, d_\bullet)\longrightarrow \mathbb{A}_{hC_2}$ given by
\begin{equation}\label{psiAdefn}
\psi^{A}_n: N\Falg(A)_n\longrightarrow (\mathbb{A}_{hC_2})_n=A,\quad \tau\longmapsto \tau(\Delta^{n+1},\langle0\rangle),
\end{equation}
is a quasi-isomorphism of chain complexes. In particular,
$$
\pi_n(\Falg_\bullet(A))\cong H_n(N\Falg(A))\cong H_n(C_2;A)=\left\{
\begin{array}{cc}
    \frac{A}{\left\{b- b^*\ \mid \ b\in A\right\}}, & n=0, \\[10pt]
    \frac{\left\{a\in A\ \mid \ a=(-1)^{n+1} a^*\right\}}{\left\{b+(-1)^{n+1} b^*\ \mid\ b\in A\right\}}, & n\geq 1. 
    \end{array}
\right. 
$$
\end{prop}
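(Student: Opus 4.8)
The plan is to prove the stronger statement that $\psi^A_\bullet$ is an \emph{isomorphism} of chain complexes $N\Falg(A)_\bullet\xrightarrow{\ \cong\ }\mathbb{A}_{hC_2}$; the quasi-isomorphism assertion is then immediate, and the homology formula follows from the Dold--Kan identification $\pi_n(\Falg_\bullet(A))\cong H_n(N\Falg(A))$ together with the standard description of $H_*(C_2;A)=H_*(\mathbb{A}_{hC_2})$. The key preliminary is to understand normalised chains: if $\tau\in N\Falg(A)_n$ then, since $\delta_i$ is the restriction of $\partial^{\mathrm{Fun}}_{i+1}$, the conditions $\delta_1\tau=\dots=\delta_n\tau=0$ together with $\tau\in Z_n(A)$ say precisely that $\tau(L,K)=0$ whenever $K\subseteq L\subseteq\partial_j\Delta^{n+1}$ with $j\neq 1$. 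Since every face of the horn $\Lambda^{n+1}_1=\bigcup_{j\neq1}\partial_j\Delta^{n+1}$ lies in one such $\partial_j\Delta^{n+1}$, and every contractible subcomplex is an iterated pushout of faces, Lemmas \ref{incexclemma} and \ref{SubFacelem} promote this to: $\tau$ vanishes on all of $\Lambda^{n+1}_1$, and $\tau(\sigma,\xi)=0$ for every pair of faces $\xi\subseteq\sigma$ with $\sigma\notin\{\Delta^{n+1},\partial_1\Delta^{n+1}\}$.

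First I would check that $\psi^A$ is a chain map. Writing $c:=\psi^A_n(\tau)=\tau(\Delta^{n+1},\langle0\rangle)$ and using functoriality of $\tau$ along $\langle0\rangle\subset\partial_1\Delta^{n+1}\subset\Delta^{n+1}$, one gets $\psi^A_{n-1}(\delta_0\tau)=\tau(\partial_1\Delta^{n+1},\langle0\rangle)=c-\tau(\Delta^{n+1},\partial_1\Delta^{n+1})$; so it is enough to prove $\tau(\Delta^{n+1},\partial_1\Delta^{n+1})=(-1)^{n+1}c^*$, which identifies the Moore differential with $a\mapsto a+(-1)^na^*$ under $\psi^A$. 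For $n\geq 1$ this is face-horn duality (\ref{facehornduality}) for $\Delta^{n+1}$, namely $\tau(\Delta^{n+1},\partial_1\Delta^{n+1})=(-1)^{n+1}\tau^*(\Delta^{n+1},\Lambda^{n+1}_1)$, combined with $\tau(\Delta^{n+1},\Lambda^{n+1}_1)=\tau(\Delta^{n+1},\langle0\rangle)+\tau(\Lambda^{n+1}_1,\langle0\rangle)=c+0=c$, using $\langle0\rangle\subset\Lambda^{n+1}_1$ and the vanishing of $\tau$ on $\Lambda^{n+1}_1$; the case $n=0$ is vacuous.

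Next I would show that each $\psi^A_n\colon N\Falg(A)_n\to A$ is bijective. Injectivity: if $c=0$ then the identity just proved gives $\tau(\Delta^{n+1},\partial_1\Delta^{n+1})=0=\tau(\partial_1\Delta^{n+1},\langle0\rangle)$, and iterating the same kind of reasoning — functoriality, the vanishing on the $\partial_j\Delta^{n+1}$ ($j\neq1$), and face-horn duality for the $n$-simplex $\partial_1\Delta^{n+1}$ — forces all $\tau(\Delta^{n+1},\sigma)$ to vanish and $\tau|_{\partial_1\Delta^{n+1}}=0$, whence $\tau=0$ by Lemma \ref{SubFacelem}. Surjectivity: given $a\in A$ I would build a preimage $\phi_n(a)\in N\Falg(A)_n$ by induction on $n$, the case $n=0$ being the directly verified isomorphism $\Falg_0(A)\cong A$, $\tau\mapsto\tau(\Delta^1,\langle0\rangle)$. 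For $n\geq1$: let $\phi_n(a)$ vanish on $\partial_j\Delta^{n+1}$ for $j\neq1$, restrict on $\partial_1\Delta^{n+1}$ to the reindexing of $\phi_{n-1}(a+(-1)^na^*)$, satisfy $\phi_n(a)(\Delta^{n+1},\sigma)=a$ for every proper face $\sigma\neq\partial_1\Delta^{n+1}$ and $\phi_n(a)(\Delta^{n+1},\partial_1\Delta^{n+1})=(-1)^{n+1}a^*$, and extend over $\subcomp_{n+1}$ via $\iota_!$ of Lemma \ref{SubFacelem}. One then checks that this recipe yields a well-defined functor (the composition relations collapse, via the inductive hypothesis and the symmetry $(a+(-1)^na^*)^*=(-1)^n(a+(-1)^na^*)$, to immediate identities), that $\phi_n(a)\in Z_n(A)$ and $\delta_1\phi_n(a)=\dots=\delta_n\phi_n(a)=0$, and that $\phi_n(a)\in D_{n+1}(A)$ — here Lemma \ref{dualitycriterionlem} reduces matters to face-horn duality for the $0$-th horn of $\Delta^{n+1}$, which after expanding $\phi_n(a)(\Delta^{n+1},\Lambda^{n+1}_0)$ by inclusion--exclusion becomes the elementary identity $\sum_{\emptyset\neq S\subseteq\{1,\dots,n+1\}}(-1)^{|S|-1}=1$. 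Since $\psi^A_n\phi_n(a)=a$ and $\psi^A_n$ is injective, $\psi^A_n$ is bijective.

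Putting these together, $\psi^A$ is a chain map that is bijective in each degree, hence an isomorphism of complexes and in particular a quasi-isomorphism; the displayed homology of $\Falg_\bullet(A)$ is then $H_n(\mathbb{A}_{hC_2})=H_n(C_2;A)$ computed in the usual way. The main obstacle is the combinatorial bookkeeping in the surjectivity step: checking that the explicit $\phi_n(a)$ satisfies all three conditions defining $\Falg_n(A)$ inside $\mathrm{Fun}(\subcomp_{n+1},A)$, and keeping signs straight throughout the face-horn duality computations. Once the vanishing behaviour of normalised chains on $\Lambda^{n+1}_1$ is established, the remaining arguments are essentially formal.
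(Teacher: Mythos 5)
Your proof is correct but takes a genuinely stronger route than the paper's. The paper proves only what is asserted, a quasi-isomorphism: after the same chain-map check, it identifies the $n$-cycles of $\Falg_\bullet(A)$ explicitly with $\{a\in A:a=(-1)^{n+1}a^*\}$ (via the functor $\tau_a$ supported on the top face $\Delta^{n+1}$), giving surjectivity of $\psi_*$ on homology, and establishes injectivity on homology by constructing a bounding functor $T\in\Falg_{n+1}(A)$ whenever the cycle's torsion element is a norm $b+(-1)^{n+1}b^*$. You instead show each $\psi^A_n$ is a bijection, so $\psi^A_\bullet$ is an isomorphism of complexes. This stronger fact is exactly the content of Remark~\ref{remNFdA}, where the paper sketches the injectivity argument (a normalised $\tau$ is pinned down by $\delta_0\tau$ and $\tau(\Delta^{n+1},\langle 0\rangle)$, which are linked by face-horn duality for $(\partial_1\Delta^{n+1},\Lambda_1^{n+1})$) but explicitly ``leave[s] it to the reader to check that $\psi_\bullet^{A}$ is indeed surjective'' --- the step you supply with the recipe for $\phi_n(a)$. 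Your preliminary characterisation of normalised chains (vanishing on $\Lambda_1^{n+1}$ and on all faces other than $\Delta^{n+1}$ and $\partial_1\Delta^{n+1}$), the inductive construction of $\phi_n(a)$, and the reduction of the face-horn duality check to the $0$-th horn via Lemma~\ref{dualitycriterionlem} are all sound. The tradeoff: your route has a cleaner logical structure (one bijection per degree rather than a Claim plus a separate bounding-functor construction) at the cost of heavier bookkeeping, since you must verify $\phi_n(a)$ is well-defined as a functor and meets all three defining conditions of $\Falg_n(A)$. One small slip: in the chain-map step the functoriality identity should read $\tau(\Delta^{n+1},\Lambda_1^{n+1})=\tau(\Delta^{n+1},\langle 0\rangle)-\tau(\Lambda_1^{n+1},\langle 0\rangle)$, not a sum; since the second term vanishes the conclusion is unchanged.
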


\begin{proof}
First we verify that $\psi_\bullet=\psi_\bullet^{A}$ is a chain map. Let $\tau\in N\Falg(A)_n$ and write 
$$
a:=\psi_n(\tau)=\tau(\Delta^{n+1},\langle0\rangle),\quad b:=\psi_{n-1}(d_n\tau)=\tau(\partial_1\Delta^{n+1},\langle0\rangle), \quad c:=\tau(\Delta^{n+1},\partial_1\Delta^{n+1}).
$$
Noting that $\tau(\Lambda_1^{n+1},\langle0\rangle)=0$ by inclusion-exclusion, applying $\tau$ to the diagram in $\subcomp_{n+1}$
$$
\begin{tikzcd}
\langle0\rangle\rar\dar&\Lambda_1^{n+1}\dar\\
\partial_1\Delta^{n+1}\rar &\Delta^{n+1}
\end{tikzcd}
$$
and duality of $\tau$, we obtain
$$
(-1)^{n+1} c^*=a=b+c\implies a+(-1)^{n} a^*=b+c+(-1)^{n}\left((-1)^{n+1} c^*\right)^*=b,
$$
i.e., $d_n(\psi_n(\tau))=\psi_{n-1}(d_n\tau)$.

We now have to show that the map
$$
\psi_*: H_n(N\Falg(A))\longrightarrow H_n(C_2;A)
$$
is an isomorphism for $n\geq 0$. 

\begin{cl}
For $n>0$, there is a bijection 
\begin{align*}
\tau_{(-)}:\{a\in A\ \mid \ a=(-1)^{n+1} a^*\}&\longleftrightarrow \bigcap_{i=0}^n\ker(\delta_{i}: \Falg_n(A)\longrightarrow \Falg_{n-1}(A)):\psi_n, \label{Fdcyclebijection}\\
a&\longmapsto \tau_{a}\\
\tau(\Delta^{n+1},\langle0\rangle)&\mapsfrom \tau
\end{align*}
where $\tau_{a}\in \Falg_n(A)$ is the functor given by
$$
\tau_{a}(L,K)=\left\{
\begin{array}{cl}
    a, & \text{if}\ K\subsetneq L=\Delta^{n+1},\\
    0, & \text{otherwise.}
\end{array}
\right.
$$
\end{cl}
\begin{proof}[Proof of Claim]
Note that the condition $a=(-1)^{n+1}a^*$ is exactly the face-horn duality for $\Delta^{n+1}$, so $\tau_{a}$ is indeed an element of $\Falg_n(A)$. Also observe that $\psi_n(\tau_a)=a$, so we only need to show that $\tau_{(-)}$ is surjective. Let $\tau$ be a cycle in $\Falg_n(A)$, and set $a:=\tau(\Delta^{n+1},\langle0\rangle)$; we check that $\tau=\tau_{a}$. By the functoriality relation $\tau(L,K)=\tau(\Delta^{n+1},K)-\tau(\Delta^{n+1},L)$, we may assume that $L=\Delta^{n+1}$, and by Proposition \ref{SubFacelem} that $K=\sigma\in \face_{n+1}$. Let $i$ be such that $\sigma\subset\partial_i\Delta^{n+1}$. As $\partial_i\tau=0$,
$$
\tau(\Delta^{n+1},\sigma)=\tau(\Delta^{n+1},\partial_i\Delta^{n+1})+\tau(\partial_i\Delta^{n+1},\sigma)=\tau(\Delta^{n+1},\partial_i\Delta^{n+1}),
$$
so it is enough to show that $\tau(\Delta^{n+1},\partial_i\Delta^{n+1})=a$ for $i=0,\dots,n+1$. If $i\neq 0$, this follows from the definition of $a:=\tau(\Delta^{n+1},\langle0\rangle)$. Applying $\tau$ to the diagram in $\subcomp_{n+1}$
$$
\begin{tikzcd}
\langle2,\dots,n+1\rangle\rar\dar &\partial_1\Delta^{n+1}\dar\\
\partial_0\Delta^{n+1}\rar &\Delta^{n+1}
\end{tikzcd}
$$
and noting that $\partial_0\tau=0$, we obtain $\tau(\Delta^{n+1},\partial_0\Delta^{n+1})=a$, as required. Also by definition $\tau_a$ is a cycle in $\Falg_\bullet(A)$ and $\tau_{a}(\Delta^{n+1},\langle0\rangle)=a$, so the claim follows. 
\end{proof}

The previous claim shows that $\psi_*$ is surjective when $n>0$. But this is also the case when $n=0$, as
$$
\psi_0: N\Falg_0(A)=\Falg_0(A)\longrightarrow A, 
$$
is an isomorphism: namely, for $a\in A$, the functor $\tau: \subcomp_1\to A$ given by $\tau(\Delta^1,\langle0\rangle):=a$ and $\tau(\Delta^1,\langle 1\rangle):= -a^*$ (and zero otherwise) is clearly in $\Falg_0(A)$ and sent to $a$ under $\psi_0$. Conversely, if $\tau\in \Falg(A)_0$ and $\tau(\Delta^1,\langle0\rangle)=0$, duality then forces $\tau$ to be zero itself.

For injectivity of $\psi_*$, let $\tau\in N\Falg(A)_n$ be a cycle such that $\psi_*[\tau]=0$, i.e., $\tau(\Delta^{n+1},\langle0\rangle)=b+(-1)^{n+1} b^*$ for some $b\in A$. It is not difficult to see that there exists a functor $T\in \mathrm{Fun}^\square(\subcomp_{n+2},A)$ with
$$
\partial_i T=\left\{
\begin{array}{cc}
    \tau, & i=1,  \\
    0, & i\neq 1,
\end{array}
\right.\qquad
T(\Delta^{n+2},\partial_i\Delta^{n+2}):=\left\{
\begin{array}{cc}
    -b, & i=1, \\
    (-1)^{n+1} b^*, & i\neq 1,
\end{array}
\right.\quad (0\leq i\leq n+2).
$$
By construction, $T$ satisfies face-horn duality for any face $\sigma\neq \Delta^{n+1}$ and for the first face-horn $(\partial_1\Delta^{n+2},\Lambda_1^{n+2})$. Therefore by Lemma \ref{dualitycriterionlem} it satisfies all face-horn dualities. Then $T$ is clearly an element of $N\Falg(A)_{n+1}$ bounding $\tau$, so $[\tau]=0$ in $H_n(N\Falg(A)_\bullet)$. This finishes the proof.
\end{proof}

\begin{rem}
The map $\psi_\bullet^{A}: N\Falg(A)\overset{\cong}\longrightarrow \mathbb{A}_{hC_2}$ is in fact an isomorphism of chain complexes. An element $\tau\in N\Falg(A)_n$ is completely determined by $\delta_0\tau(=\partial_1\tau)$ and $b:=\tau(\Delta^{n+1},\langle0\rangle)\in A$ using functoriality and duality. By the claim in the proof of Proposition \ref{quasiisoprop}, $\delta_0\tau=\tau_{a}$ for some $a\in A$ with $a=(-1)^{n} a^*$. Face-horn duality for $(\partial_1\Delta^{n+1},\Lambda_{1}^{n+1})$ yields
$$
a+(-1)^{n+1} b^*=b\implies a=b+(-1)^{n} b^*,
$$
so $\delta_0\tau=\tau_{a}$ is completely determined by $b=\tau(\Delta^{n+1},\langle0\rangle)$. As we will not need this fact, we leave it to the reader to check that $\psi_\bullet^{A}$ is indeed surjective.
\end{rem}

Before moving on to the proof of Theorem \ref{ActualThmC}, we still need some categorical background. For the rest of the section, we shall adopt the conventions of \cite{Schwede}---for instance, our category $\Omega^\infty\text{-$\mathsf{Top}$}$ of infinite loop spaces (a.k.a. connective spectra) is modelled by the model category of $\Gamma$-spaces thereof. For $G=\{e\}$ or $C_2$ and $\mathsf{C}$ a category, the category of $G$-objects in $\mathsf{C}$ is $\mathsf{C}^G:=\mathrm{Fun}(G,\mathsf{C})$. Observe that there are natural isomorphisms of categories $\mathsf{Mod}_{\Z[G]}\cong \mathsf{Ab}^G$ and $H\Z[G]\text{-}\mathsf{Mod}\cong (H\Z\text{-}\mathsf{Mod})^G$. There is an inclusion of categories $\mathsf{Mod}_{\Z[G]}\xhookrightarrow{}\mathsf{sMod}_{\Z[G]}$ sending a $\Z[G]$-module $M$ to the \textit{constant} simplicial $\Z[G]$-module on $M$, denoted by $\underline M=\underline{M}_\bullet$. By \cite[p. 332]{Schwede}, the Eilenberg--MacLane functor $H: \mathsf{Mod}_{\Z[G]}\to H\Z[G]\text{-}\mathsf{Mod}$ upgrades to a functor
$$
H: \mathsf{sMod}_{\Z[G]}\longrightarrow H\Z[G]\text{-}\mathsf{Mod}
$$
such that, for $M_\bullet\in \mathsf{sMod}_{\Z[G]}$, the underlying infinite loop space of $HM_\bullet$ is the realisation $|M_\bullet|$.

Given a simplicial $\Z[C_2]$-module $M_\bullet$, we will write $(M_\bullet)_{hC_2}$ for the simplicial abelian group
$$
(M_\bullet)_{hC_2}:= \mathrm{Diag}(M_\bullet \otimes_{\Z[C_2]} \Z[E_\bullet C_2]): [\hspace{1pt}p]\longmapsto M_p\otimes_{\Z[C_2]}\Z[C_2^{\times(\hspace{1pt} p+1)}].
$$
Now, given a $C_2$-spectrum $X$, we will write $X_{hC_2}$ for the spectrum $X\wedge_{C_2}(EC_2)_+$. The following result says that both meanings of $(-)_{hC_2}$ are intertwined by the Eilenberg--MacLane functor.

\begin{lem}\label{EMhC2Lemma}
    For $M_\bullet\in \mathsf{sMod}_{\Z[C_2]}$, there is a natural equivalence of spectra
    $$ (HM_\bullet)_{hC_2}\xrightarrow{\sim}H((M_\bullet)_{hC_2}).
    $$
\end{lem}

\begin{proof}
    According to \cite[Lem. 4.2]{Schwede}, there is a natural equivalence
    $$
HM_\bullet\wedge_{H\Z[C_2]} H(\Z[E_\bullet C_2])\xrightarrow{\sim} H((M_\bullet)_{hC_2}),
    $$
    where we are using that $H(\Z[E_\bullet C_2])$ is a cofibrant $H\Z[C_2]$-module. But for a simplicial set $X_\bullet$, $H(\Z[X_\bullet])$ is the free $H\Z$-module on $X_\bullet$, i.e., it is (equivalent to) $H\Z\wedge |X_\bullet|_+$. Thus
    \begin{align*}
        HM_\bullet\wedge_{H\Z[C_2]} H(\Z[E_\bullet C_2])&\cong \operatorname{colim}_{C_2} HM_\bullet\wedge_{H\Z} H(\Z[E_\bullet C_2])\\
        &\simeq \operatorname{colim}_{C_2} HM_\bullet\wedge_{H\Z} (H\Z\wedge (EC_2)_+)\\
        &\simeq HM_\bullet\wedge (EC_2)_+\\
        &=: (HM_\bullet)_{hC_2}.
    \end{align*}
    In the second and third equivalences, we are implicitly using that the $C_2$-spectra we are taking the colimit of are free, thus the colimit coincides with the homotopy colimit.
\end{proof}

Finally, we will denote $C:\mathsf{sAb}\to \mathsf{Ch}_{\geq 0}(\Z)$ for the functor sending a simplicial abelian group $(M_\bullet, \delta_\bullet)$ to the chain complex $(CM_\bullet, d_\bullet)$
$$
CM_n:= M_n, \quad d_n=\sum_{i=0}^{n}(-1)^{i}\delta_i: M_n\longrightarrow M_{n-1}.
$$
The normalised chain complex $NM_\bullet$ is a sub-complex of $CM_\bullet$, and in fact the inclusion $NM_\bullet\xhookrightarrow{}CM_\bullet$ is a split quasi-isomorphism \cite[$\S$III.2, Thm. 2.1 $\&$ Thm. 2.4]{GoerssJardine}. %Crucially, this relies on \( M_\bullet \) being simplicial rather than merely semi-simplicial---hopefully excusing us from the technical Proposition \ref{SubFacelem} and Lemma \ref{FalgIsSimplicial}.

\begin{proof}[Proof of Theorem \ref{ActualThmC}]
Our goal is to identify $\Falg_\bullet(A)$ with $\underline{A}_{h{C_2}}$, functorially in $A\in \mathsf{Mod}_{\Z[C_2]}$. To do so, we first compare $N\underline{A}_{hC_2}$ and $\mathbb{A}_{hC_2}$.

Write $M_\bullet\overset{\epsilon}\twoheadrightarrow\Z$ for the minimal resolution of $\Z$ by free $\Z[C_2]$-modules
$$
\begin{tikzcd}
\dots\rar &\Z[C_2]\rar["1+t"] &\Z[C_2]\rar["1-t"]&\Z[C_2]\rar[two heads,"\epsilon"] &\Z,
\end{tikzcd}
$$
where $\epsilon$ sets $t=1$. The chain complex $C(\Z[E_\bullet C_2])$ together with the augmentation map $\epsilon: C(\Z[E_\bullet C_2])_0=\Z[C_2]\to \Z$ provides another such resolution (also known as the \textit{canonical resolution} of $\Z$ by free $\Z[C_2]$-modules). Therefore, there is a map $C(\Z[E_\bullet C_2])\to M_\bullet$ of resolutions of $\Z$ which, upon applying $A\otimes_{\Z[C_2]}(-)$, provides a quasi-isomorphism of chain complexes $C\underline{A}_{hC_2}\overset{\simeq}\longrightarrow \mathbb{A}_{hC_2}$. We thus obtain the desired quasi-isomorphism of chain complexes
$$
\begin{tikzcd}
\phi^{A}_\bullet: N\underline{A}_{hC_2}\rar[hook, "\simeq"] &C\underline{A}_{hC_2}\rar["\simeq"] & \mathbb{A}_{hC_2},
\end{tikzcd}
$$
which is clearly functorial in $A$.

The Dold--Kan correspondence (\ref{DoldKan}) can be upgraded to a Quillen equivalence of model categories \cite[$\S$II.4]{QuillenDoldKan} (for the projective model structure on chain complexes), so there is a zig-zag of equivalences (functorial in $A\in \mathsf{Mod}_{\Z[C_2]}$)
\begin{equation}\label{zigzagequivalenceFalgThmC}
\begin{tikzcd}
\Falg_\bullet(A)\rar["(\psi^{A}_\bullet)^{\vee}", "\sim"'] & \Gamma\mathbb{A}_{hC_2} &\lar["(\phi^{A}_\bullet)^\vee"', "\sim"] \underline{A}_{hC_2}.
\end{tikzcd}
\end{equation}
Each map in the zig-zag is an equivalence since $\Gamma$ preserves equivalences ($\Gamma$ is a right Quillen functor and every object in $\mathsf{Ch}_{\geq 0}(\Z)$ is fibrant) and because $N$ and $\Gamma$ are inverse to each other (as the Dold--Kan correspondence (\ref{DoldKan}) is an actual equivalence of categories). Applying geometric realisation to (\ref{zigzagequivalenceFalgThmC}) and noting Lemma \ref{EMhC2Lemma}, there results a zig-zag of infinite loop spaces
\begin{align*}
    |\Falg_\bullet(A)|\overset{\simeq}\longrightarrow |\Gamma \mathbb{A}_{hC_2}|&\overset{\simeq}\longleftarrow |\underline{A}_{hC_2}|= \Omega^\infty(H(\underline{A}_{hC_2}))\overset{\simeq}\longleftarrow \Omega^\infty((HA)_{hC_2}),
\end{align*}
which once again is functorial in $A$. This finishes the proof.
\end{proof}

\begin{rem}\label{technicalrem}
    A key step in the proof is the identification of $\Falg_\bullet(A)$ with $\Gamma\mathbb{A}_{hC_2}$ in~\eqref{zigzagequivalenceFalgThmC}, via the (homotopical) Dold--Kan correspondence. Crucially, this relies on the fact that $\Falg_\bullet(A)$ is a simplicial abelian group (cf. Lemma \ref{FalgIsSimplicial}), rather than just semi-simplicial, as we are unaware of a generalisation of Dold--Kan to the semi-simplicial setting. This hopefully clarifies the need for Proposition \ref{SubFacelem} and Lemma \ref{FalgIsSimplicial}.
    
    %Note that while any semi-simplicial group is Kan \cite[Thm. 3]{simpgroupsKan}, and every Kan semi-simplicial set admits a compatible system of degeneracy maps making it a simplicial Kan complex \cite[Thm. 5.7]{ROURKE1971}, the induced degeneracies on a semi-simplicial abelian group need not be group homomorphisms. This hopefully clarifies the need for Proposition \ref{SubFacelem} and Lemma \ref{FalgIsSimplicial}.

\end{rem}

\subsection{Proof of Theorem \ref{ThmB}}\label{subsectionFdgeometry}
Let $F_\bullet(M)$ denote the simplicial homotopy fibre
\begin{equation}\label{geomFddefn}
F_\bullet(M):=\operatorname{holim}\left(\begin{tikzcd}
\{M^d\}\rar &\widetilde{\mathcal M}^h_\bullet&\lar[hook']\widetilde{\mathcal M}^s_\bullet
\end{tikzcd}\right).
\end{equation}
It has as $p$-simplices
$$
F_p(M)=\left\{W\in \widetilde{\mathcal{M}}^h_{p+1}: W_0=M, \quad \partial_0W\in \widetilde{\mathcal M}^s_p\right\},
$$
and as face maps
$$
\delta_i: F_p(M)\longrightarrow F_{p-1}(M), \quad W\longmapsto \partial_{i+1}W=W_{\langle 0,\dots,\widehat{i+1},\dots, p+1\rangle}, \quad i=0,\dots,p.
$$

Recall that $C_2$ acts on $\Wh(M)$ by $t\cdot \tau:=(-1)^{d-1}\overline\tau$. Our task is to find an equivalence
$
|F_\bullet(M)|\simeq \Omega^\infty(H\Wh(M)_{hC_2})
$, and by Theorem \ref{ActualThmC} we already know that the latter space is equivalent to $|\Falg_\bullet(\Wh(M))|$. Therefore, in order to show Theorem \ref{ThmB}, it suffices to establish an equivalence of semi-simplicial sets between $F_\bullet(M)$ and $\Falg_\bullet(\Wh(M))$.

Let $W^{d+p+1}\in F_{p}(M)$. We may find some face preserving homotopy equivalence $f: W\overset{\simeq_h}\longrightarrow_\Delta M\times \Delta^{p+1}$ with $f_0=\mathrm{Id}_M: W_0=M\to M$. With such a homotopy equivalence we can identify $\pi_1(W_K)$ with $\pi_1(M)$ for every sub-complex $K\in \subcomp_p$ so that for $K\subset L$, the identifications $\pi_1(W_K)\cong\pi_1(M)$ and $\pi_1(W_L)\cong \pi_1(M)$ are compatible with the induced isomorphism $\pi_1(W_K\xhookrightarrow{}W_L):\pi_1(W_K)\cong \pi_1(W_L)$. Any other choice of such a homotopy equivalence $f'$ will not change this identification, as $f$ and $f'$ are homotopic rel $W_0=M$ (to see this, note that it is equivalent to showing that a homotopy equivalence $f: M\times\Delta^p\overset{\simeq}\longrightarrow M\times \Delta^p$ with $f_0=\mathrm{Id}_M$ is homotopic rel $M\times \Delta^p_0$ to the identity. This in part follows from the fact that it is block homotopic rel $M\times \Delta^p_0$ to the identity by an \textit{Alexander trick}-like argument similar to \cite[Lem. 2.1]{BurgLashRoth} and that $h\mathrm{Aut}(M)_\bullet\simeq \widetilde{h\mathrm{Aut}}(M)_\bullet$). Once we have made such an identification of fundamental groups, we can define the functor $\tau_W\in \mathrm{Fun}(\subcomp_p, {\Wh(M)})$
$$
\tau_W(L,K):=\tau(W_K\lhook\joinrel\xrightarrow{\ \simeq\ }W_L)\in \Wh(M).
$$
The composition rule (\ref{whiteheadtorsioncomposition}) of the Whitehead torsion and the inclusion-exclusion principle (\ref{whiteheadtorsincexc}) guarantees that $\tau_W$ satisfies (\ref{incexcprinciple}) and therefore, by Lemma \ref{incexclemma}, $\tau_W$ is an element of $\mathrm{Fun}^\square(\subcomp_p,{\Wh(M)})$. In fact, by the definition of the $F_\bullet(M)$, the functor $\tau_W$ is a $p$-simplex in this space (remember that $\tau^*$ in (\ref{generalisedfacehornduality}) should now be replaced by $(-1)^{d-1}\overline{\tau}$).

\begin{prop}\label{Fdgeomvsalg}
For $d=\dim M\geq 5$, there is an equivalence of semi-simplicial sets
$$
\tau_{(-)}: F_\bullet(M)\overset{\simeq}\longrightarrow \Falg_\bullet(\Wh(M)), \quad W\longmapsto \tau_W.
$$
\end{prop}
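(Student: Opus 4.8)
The plan is to construct a map of semi-simplicial sets in each direction and check they are mutually inverse. One direction is already given: $W\mapsto \tau_W$. For the inverse, I would use the $s$-cobordism theorem to realise an abstract torsion functor by an actual stratified manifold. More precisely, given $\tau\in\Falg_p(\Wh(M))$, I would build a manifold $W^{d+p+1}\Rightarrow\Delta^{p+1}$ face-stratum by face-stratum, by induction on the dimension of the faces of $\Delta^{p+1}$: start with $W_{\langle 0\rangle}=M$ (the constraint $W_0=M$ in $F_p(M)$ forces this), and having built $W_\sigma$ over a face $\sigma$ together with a face-preserving homotopy equivalence $W_\sigma\overset{\simeq_h}\longrightarrow_\Delta M\times\sigma$, attach over each larger face $\sigma'\supset\sigma$ an $h$-cobordism whose torsion relative to the already-constructed boundary piece is the prescribed element $\tau(\sigma',\sigma)$, which exists and is unique up to diffeomorphism rel boundary by Theorem \ref{scobthm}. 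The functoriality relation and the inclusion-exclusion principle (\ref{incexcprinciple}) satisfied by $\tau$ guarantee these choices are compatible along intersections of faces, so the pieces glue to a well-defined $W$; the $Z_p$-condition (vanishing torsion for subcomplexes of $\langle 1,\dots,p+1\rangle$) ensures that $\partial_0 W$ lies in $\widetilde{\mathcal M}^s_p$, and the $\mathrm{Fun}^\square$ and $D_{p+1}$ conditions ensure the collaring/transversality and duality hypotheses of Definition \ref{stratifieddefn} are met. The $\epsilon$-collaring data is contractible, so it can always be arranged.

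Next I would verify that the two constructions are inverse to each other up to the appropriate notion of equivalence. Going $\tau\mapsto W_\tau\mapsto \tau_{W_\tau}$ returns $\tau$ essentially by construction, since the $h$-cobordism attached over $(\sigma',\sigma)$ was chosen with torsion $\tau(\sigma',\sigma)$, and the compatibility of the $\pi_1$-identifications was built in. Going $W\mapsto \tau_W\mapsto W_{\tau_W}$ requires showing that the stratified manifold reconstructed from the torsion data of $W$ is face-preserving-diffeomorphic to $W$ itself; this is again an inductive application of the $s$-cobordism theorem exactly as in the proof of Proposition \ref{connectedcompsofMs}, using that $\tau_W$ records precisely the torsion of every face inclusion $W_\sigma\hookrightarrow W_{\sigma'}$, so each inductive step has a diffeomorphism realising it, and these can be chosen compatibly. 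Here one must be slightly careful that the construction $\tau\mapsto W_\tau$ is only well-defined up to (face-preserving) diffeomorphism, so the correct statement is that $\tau_{(-)}$ is a well-defined bijection on the level of isomorphism classes of simplices, which is what is needed for a map of semi-simplicial sets and for it to be a levelwise bijection — hence an isomorphism, a fortiori a weak equivalence.

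Finally I would check compatibility with face maps: the $i$-th face map on $F_\bullet(M)$ sends $W$ to $W_{\langle 0,\dots,\widehat{i+1},\dots,p+1\rangle}=\partial_{i+1}W$, while on $\Falg_\bullet(\Wh(M))$ it is the restriction of $\partial^{\mathrm{Fun}}_{i+1}$; since the torsion functor $\tau_{\partial_{i+1}W}$ is manifestly the restriction of $\tau_W$ to the subposet $\subcomp_p\subset\subcomp_{p+1}$ sitting over that face, the square commutes strictly. (There are no degeneracies to check, as these are only semi-simplicial sets.) Therefore $\tau_{(-)}$ is an isomorphism, hence an equivalence, of semi-simplicial sets.

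The main obstacle, I expect, is the well-definedness and compatibility of the inductive gluing in the inverse construction: one must check that attaching $h$-cobordisms over all faces of $\Delta^{p+1}$ with prescribed relative torsions can be done consistently, i.e. that the $\mathrm{Fun}^\square$ and inclusion-exclusion conditions on $\tau$ are exactly what is needed for the local pieces to agree on overlaps and assemble into a genuine manifold stratified over $\Delta^{p+1}$ satisfying the $\epsilon$-collaring conditions. This is the translation, from algebra to geometry, of Lemma \ref{incexclemma} and Lemma \ref{SubFacelem}, and is where the geometric content of the argument (the $s$-cobordism theorem, unbending corners, pasting along collars) is concentrated; the duality condition $D_{p+1}$ plays the complementary role of matching the algebraic $C_2$-action with the geometric direction-reversal of $h$-cobordisms via (\ref{hcobdualformula}).
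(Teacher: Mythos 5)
Your proposed strategy has a fundamental flaw: you try to show that $\tau_{(-)}: F_\bullet(M)\to \Falg_\bullet(\Wh(M))$ is an \emph{isomorphism} of semi-simplicial sets by building a strict inverse, but this map is nowhere near a levelwise bijection. A $p$-simplex of $F_\bullet(M)$ is a specific submanifold of $\R^\infty\times\Delta^{p+1}$ satisfying the stratification and collaring conditions; there is a continuum of distinct such submanifolds $W$ (differing by, say, the choice of embedding, or by diffeomorphism) all giving the same torsion functor $\tau_W\in\Falg_p(\Wh(M))$, and $\Falg_p(\Wh(M))$ is only countable. Passing to ``isomorphism classes of simplices'' as you suggest at the end does not rescue this: $F_\bullet(M)$ is a semi-simplicial set of manifolds, not of isomorphism classes, and a bijection on isomorphism classes is not what a levelwise bijection of semi-simplicial sets means. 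So the conclusion that $\tau_{(-)}$ is an isomorphism, hence ``a fortiori a weak equivalence,'' does not go through; the map is genuinely only a weak equivalence, and one must prove it by computing homotopy groups.

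The paper's proof does exactly that: it verifies that $\tau_{(-)}$ induces an isomorphism $\pi_n(F_\bullet(M))\to \pi_n(\Falg_\bullet(\Wh(M)))$ for all $n$, using the explicit description of the target's homotopy groups from Proposition \ref{quasiisoprop}. For surjectivity, given $a\in\Wh(M)$ with $a=(-1)^{d+n}\overline a$, one realises $a$ as the torsion of a single $h$-cobordism rel boundary $W: M\times\Lambda_0^{n+1}\hcob W_{\langle1,\ldots,n+1\rangle}$, stratifies $W$ over $\Delta^{n+1}$, and checks $\tau_W=\tau_a$ via Lemma \ref{SubFacelem} and duality; no face-by-face induction is needed because the cycle representatives $\tau_a$ are already concentrated on the top face. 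For injectivity, given a cycle $W\in F_n(M)$ with $[\tau_W]=0$, one attaches a further $h$-cobordism rel boundary $V:W\hcob W'$ whose torsion makes $W'$ simple over $M\times\Lambda_0^{n+1}$, applies the $s$-cobordism theorem to trivialise $W'$, and produces a nullhomotopy of $W$ in $F_\bullet(M)$ by restratifying $M_\phi\circ V$ over $\Delta^{n+2}$ via $\Delta^{n+1}\cong\Lambda_1^{n+2}$. Your geometric intuition — realising prescribed torsions by attaching $h$-cobordisms, using $\mathrm{Fun}^\square$ and inclusion-exclusion to ensure compatibility of pieces — does capture the spirit of the surjectivity argument, but it must be deployed on cycles and boundaries, not to manufacture an inverse map. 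Restructure the argument as a homotopy-group computation and the ideas you already have will largely carry it.
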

Together with Theorem \ref{ActualThmC}, this will prove Theorem \ref{ThmB}. In particular, by Proposition \ref{quasiisoprop},
\begin{equation}\label{FMhtpygroups}
    \pi_n(F_\bullet(M))\cong H_n(C_2;\Wh(M))\cong\left\{
    \begin{array}{cc}
        \frac{\Wh(M)}{\left\{b+ (-1)^d\overline{b}\ \mid \ b\in \Wh(M)\right\}}, & n=0, \\[10pt]
    \frac{\left\{a\in \Wh(M)\ \mid \ a=(-1)^{d+n} \overline{a}\right\}}{\left\{b+(-1)^{d+n} \overline{b}\ \mid\ b\in \Wh(M)\right\}}, & n\geq 1. 
    \end{array}
    \right.
\end{equation}
\begin{proof}[Proof of Proposition \ref{Fdgeomvsalg}]
We need to prove that $\tau_{(-)}$ induces isomorphisms in homotopy groups. To prove surjectivity, let $a\in \Wh(M)$ be such that $a=(-1)^{d+n}\overline a$ and let $W^{d+n+1}: M\times \Lambda_0^{n+1}\hcob W_{\langle 1,\dots,n+1\rangle}$ be an $h$-cobordism rel boundary with
$
\tau(W, M\times \Lambda_0^{n+1})=a
$. The manifold $W$ admits a stratified structure over $\Delta^{n+1}$ with $0$-th horn $\Lambda_0(W)=M\times \Lambda_0^{n+1}$ and $0$-th face $W_{\langle1,\dots,n+1\rangle}$. It is not difficult to see that $\tau_W=\tau_{a}$ by Proposition \ref{SubFacelem} and noting that $\tau_W$ satisfies face-horn dualities for all faces (in particular $\Delta^{n+1}$).

For injectivity, let $W\in F_n(M)$ be a cycle such that $[\tau_W]=0$ in $\pi_n(\Falg_\bullet(\Wh(M))$. By the first step in the proof of Proposition \ref{quasiisoprop}, $\tau_W=\tau_{b+(-1)^{d+n}\overline b}$ for some $b\in \Wh(M)$. Let $V: W\hcob W'$ be an $h$-cobordism rel boundary with torsion $\tau(V,W)=-b$ (after having identified $\pi_1(W)$ with $\pi_1(M)$ appropriately. Then $\pi_1(V)$ and $\pi_1(W')$ get identified with $\pi_1(M)$ too). We claim $W'$ is (face-preservingly) diffeomorphic to $M\times\Delta^{n+1}$, i.e., we have to show that $\tau(W',\Lambda_0(W'))=0$ by the $s$-cobordism theorem. Since $\Lambda_0(W')= \Lambda_0(W)=M\times\Lambda_0^{n+1}$,
$$
\tau(W,\Lambda_0(W))+\tau(V,W)=\tau(W',\Lambda_0(W'))+\tau(V,W')
$$
which, by duality, yields
$$
\tau(W',\Lambda_0(W'))=b+(-1)^{d+n}\overline b-b-(-1)^{d+n+1}\overline{(-b)}=0.
$$
Let $\phi: W'\cong M\times \Delta^{n+1}$ be a diffeomorphism fixing $\Lambda_0(W')=M\times \Lambda_0^{n+1}$, and consider $V':=M_\phi\circ V$, where $M_\phi: W'\scob M\times \Delta^{n+1}$ denotes the \textit{mapping cylinder}\footnote{This should really be the mapping cylinder \textit{with collars} (see Definition \ref{stratifieddefn}($iii$)). Namely, given a diffeomorphism (possibly rel boundary) $\phi: A\cong B$, we define $M_\phi$ by $A\times [0,1/2]\cup_{\phi\times\{1/2\}}B\times[1/2,1]$.} of $\phi$. Then using the canonical diffeomorphism rel boundary $\Delta^{n+1}\cong \Lambda_1^{n+2}$, the manifold $V'$ admits a stratified structure over $\Delta^{n+2}$ with $\partial_1V'=W$ and $\Lambda_1(V')=M\times \Lambda_1^{n+2}$. Therefore $V'$ provides a null-homotopy of $W$ in $F_\bullet(M)$, as desired. 
\end{proof}
\begin{rem}\label{degeneraciesBdiffh}
The semi-simplicial sets $\widetilde{\mathcal M}^h_\bullet$ and $\widetilde{\mathcal M}^s_\bullet$ admit compatible systems of degeneracies that make them into simplicial objects. Namely, the $i$-th degeneracy map $s_i: \widetilde{\mathcal M}^{h/s}_p\to \widetilde{\mathcal M}^{h/s}_{p+1}$ sends a $p$-simplex $W^{d+p}\Rightarrow \Delta^p$ to the pullback $W \ \leftidx{_\mathrm{pr}}{\!\times}_{s^{i}}\Delta^{p+1}$, where $\mathrm{pr}: W\to \Delta^p$ is the composition $W\subset \R^\infty\times \Delta^p\twoheadrightarrow \Delta^p$, and $s^{i}: \Delta^{p+1}\to \Delta^p$ is the linear $i$-th codegeneracy map. The pullback $W \ \leftidx{_\mathrm{pr}}{\!\times}_{s^{i}}\Delta^{p+1}$ is regarded as a manifold stratified over $\Delta^{p+1}$ under the inclusion
$$
W \ \leftidx{_\mathrm{pr}}{\!\times}_{s^{i}}\Delta^{p+1}\xhookrightarrow{}\R^\infty\times \Delta^{p+1}, \quad ((w,x),y)\longmapsto (w,y),
$$
for $(w,x)\in W\subset \R^\infty\times \Delta^p$ and $y\in \Delta^{p+1}$ such that $x=s^{i}(y)$. The semi-simplicial homotopy fibre $F_\bullet(M)$ thus inherits a simplicial structure which agrees with that of $\Falg_\bullet(\Wh(M))$, i.e., the map $\tau_{(-)}$ of Proposition \ref{Fdgeomvsalg} becomes an equivalence of simplicial sets.
\end{rem}

\subsection{Relation to the Rothenberg exact sequence}\label{Rothsection}
The purpose of this section is to derive a consequence of Theorem \ref{ThmB} in a different direction to Theorem \ref{diffdifference}. The reader may want to skip it on first reading.

For a finite group $G$ and a naïve $G$-spectrum $X$, we will denote  by $X^{tG}$ the \textit{Tate construction} of $X$ \cite[Defn. 2.2]{TateHomology}, i.e., the homotopy cofibre of the \textit{norm map} (cf. \cite[Prop. 2.4]{WWII}) $N: X_{hG}\to X^{hG}$, where $X_{hG}:= X\wedge_G (EG)_+$ are the homotopy $G$-orbits of $X$ as before, and $X^{hG}:=F(\Sigma_+^\infty EG, X)^G$ denotes the homotopy $G$-fixed points of $X$. Here $F(-,-)^G$ is the $G$-equivariant mapping spectrum. When $X=HA$ for some $\Z[G]$-module $A$, $\pi_*^s((HA)_{hG})=H_*(G;A)$ whilst $\pi_*^s((HA)^{hG})=H^{-*}(G;A)$, and the norm map in degree zero $A_G\to A^G$ is multiplication by the norm element $N=\sum_{g\in G}g\in \Z[G]$. Therefore when $G=C_2$ and $A=\Wh(M)$,
$$
\widehat{H}^*(C_2;\Wh(M)):=\pi_{*}^s(H\Wh(M)^{tC_2})=\left\{
\begin{array}{cc}
   H_{*-1}(C_2;\Wh(M)),  & *\geq 2, \\[10pt]
    \tfrac{\left\{a\in \Wh(M)\ \mid \ a=(-1)^{d} \overline{a}\right\}}{\left\{b+(-1)^{d} \overline{b}\ \mid\ b\in \Wh(M)\right\}}\subset H_0(C_2;\Wh(M)), & *=1,\\[10pt]
    \tfrac{\left\{a\in \Wh(M)\ \mid \ a=(-1)^{d-1} \overline{a}\right\}}{\left\{b+(-1)^{d-1} \overline{b}\ \mid\ b\in \Wh(M)\right\}}\subset H^0(C_2;\Wh(M)), & *=0,\\[10pt]
    H^{-*}(C_2;\Wh(M)), & *\leq -1.
    
\end{array}
\right.
$$

Let $\mathbb{L}^{h/s}(M)$ denote the quadratic ordinary/simple \textit{$L$-theory spectrum} of $M^d$ \cite[$\S$13]{RanickiAlgSurg}, whose homotopy groups are the $L$-theory groups $L^{h/s}_*(\Z[\pi_1 M])$. In this section we establish a spacified version of the positive-degree part of the \textit{Rothenberg exact sequence} for quadratic $L$-theory \cite[Prop. 1.10.1]{Ranickiexactseq}.

\begin{prop}\label{Rothprop}
For $d\geq 5$, there is an equivalence of spaces 
\begin{equation}\label{RothCorequation}
\Omega^{\infty+d+1}\operatorname{hofib}\left(\mathbb{L}^s(M)\to \mathbb{L}^h(M)\right)\simeq \Omega^{\infty+1}(H\Wh(M)^{tC_2}).
\end{equation}
\end{prop}

\begin{proof}Let $\widetilde{\mathcal S}^{h/s}_\bullet(M)$ denote the ordinary/simple \textit{block structure spaces} of $M$ \cite{QuinnSSP}. Roughly speaking, a $p$-simplex in the space $\widetilde{\mathcal S}^{h/s}_\bullet(M)$ is a pair $(W^{d+p},f)$ consisting of a manifold $W$ stratified over $\Delta^p$ and a face-preserving homotopy equivalence $f: W\overset{\simeq_{h/s}}\longrightarrow_{\Delta}M\times\Delta^p$. For $d\geq 5$, surgery theory establishes a diagram of fibration sequences \cite[$\S$3]{QuinnSSP}
$$
\begin{tikzcd}
\Omega^{\infty+d+1}\mathbb{L}^s(M)\dar\rar&\widetilde{\mathcal S}^s(M)\dar\rar&(G/O)^{M_+}_*\dar[equal]\\
\Omega^{\infty+d+1}\mathbb{L}^h(M)\rar&\widetilde{\mathcal S}^h(M)\rar&(G/O)^{M_+}_*.
\end{tikzcd}
$$
Taking homotopy fibres we obtain an equivalence
\begin{equation}\label{LtheorySurgeryequivalence}
\Omega^{\infty+d+1}\operatorname{hofib}\left(\mathbb{L}^s(M)\to \mathbb{L}^h(M)\right)\simeq\operatorname{hofib}\left(\widetilde{\mathcal S}^s(M)\longrightarrow\widetilde{\mathcal S}^h(M)\right).
\end{equation}
On the other hand, it is not difficult to see that there is another diagram of fibration sequences
$$
\begin{tikzcd}
s\mathrm{Aut}(M)\dar[hook, "\substack{\text{incl. of}\\ \text{cpts.}}"]\rar&\widetilde{\mathcal S}^s(M)\dar\rar["u"]&\widetilde{\mathcal M}^s\dar\\
h\mathrm{Aut}(M)\rar&\widetilde{\mathcal S}^h(M)\rar["u"] &\widetilde{\mathcal M}^h,
\end{tikzcd}
$$
where $u$ is the (geometric realisation of the) forgetful map sending a $p$-simplex $(W^{d+p},f)$ in $\widetilde{\mathcal S}^{h/s}_\bullet(M)$ to $W\in \widetilde{\mathcal M}^{h/s}_p$. Taking again homotopy fibres (in the basepoint components corresponding to $M$ and $\mathrm{Id}_M$), we get a map $\overline{u}:\operatorname{hofib}(\widetilde{\mathcal S}^s(X)\longrightarrow\widetilde{\mathcal S}^h(X))\to |F_\bullet(M)|\simeq \Omega^\infty(H\Wh(M)_{hC_2})$ which is an equivalence onto the components that are hit. For each $[a]\in H_0(C_2;\Wh(M))$, write $\Omega^\infty_{[a]}(H\Wh(M)_{hC_2})\subset \Omega^\infty(H\Wh(M)_{hC_2})$ for the connected component corresponding to $[a]$. There is hence a chain of equivalences
\begin{align*}
\Omega^{\infty+d+1}\operatorname{hofib}\left(\mathbb{L}^s(M)\to \mathbb{L}^h(M)\right)\overset{(\ref{LtheorySurgeryequivalence})}\simeq \operatorname{hofib}\left(\widetilde{\mathcal S}^s(X)\to\widetilde{\mathcal S}^h(X)\right)\simeq\hspace{-3pt} \bigsqcup_{[a]\in \mathrm{Im} (\pi_0(\overline{u}))} \Omega^{\infty}_{[a]}(H\Wh(M)_{hC_2}).
\end{align*}

To establish (\ref{RothCorequation}), it remains to argue that $\mathrm{Im}(\pi_0(\overline{u}))=\widehat{H}^1(C_2;\Wh(M))$. Choose a $0$-simplex in $\operatorname{hofib}(\widetilde{\mathcal S}^s_\bullet(X)\longrightarrow\widetilde{\mathcal S}^h_\bullet(X))$, that is, a $1$-simplex $(W,f)\in \widetilde{\mathcal S}^h_1(M)$ such that $W_0=M$, $f_0=\mathrm{Id}_M$ and $f_1: W_1\overset{\simeq_s}\longrightarrow M\times\{1\}$ is a simple homotopy equivalence. In particular, $W: M\hcob W_1$ is an $h$-cobordism starting at $M$. By definition of $\psi_\bullet^{\Wh(M)}$ (see (\ref{psiAdefn})), 
$$
\pi_0(\overline u): \pi_0\left(\operatorname{hofib}\left(\widetilde{\mathcal S}^s(X)\longrightarrow\widetilde{\mathcal S}^h(X)\right)\right)\longrightarrow H_0(C_2,\Wh(M)), \quad [W,f]\longmapsto [\tau(W,M)].
$$
Now $0=\tau(\hspace{2pt}f_0)=(i_M)_*^{-1}\tau(\hspace{2pt}f)+\tau(W,M)$ and, by duality, $0=(h^W_*)^{-1}\tau(\hspace{2pt}f_1)=(i_M)_*^{-1}\tau(\hspace{2pt}f)+(-1)^{d}\overline{\tau}(W,M)$. Putting these two together we obtain $\tau(W,M)=(-1)^{d}\overline{\tau}(W,M)$, so it follows that $\mathrm{Im}(\pi_0(\overline u))\subset \widehat{H}^1(C_2;\Wh(M))$. For the other inclusion, let $W\in F_0(M)$ be such that $\tau(W,M)=(-1)^{d}\overline{\tau}(W,M)$, and pick some face-preserving homotopy equivalence $f: W\overset{\simeq_h}\longrightarrow_\Delta M\times I$. By post-composing $f$ with $f_0^{-1}\times I$, for some $f_0^{-1}$ homotopy inverse to $f_0$, we may assume that $f_0=\mathrm{Id}_M$. Then
$$
\tau(\hspace{2pt}f_1)=(i_{W_1})_*^{-1}\tau(\hspace{2pt}f)+(-1)^{d}h^W_*\overline{\tau}(W,M)=(i_{W_1})_*^{-1}\tau(\hspace{2pt}f)+h^W_*\tau(W,M)=h^W_*\tau(\hspace{2pt}f_0)=0,
$$
so $f_1: W_1\simeq_s M\times \{1\}$ is a simple homotopy equivalence and thus $(W,f)$ represents a $0$-simplex in $\operatorname{hofib}(\widetilde{\mathcal S}^s_\bullet(X)\longrightarrow\widetilde{\mathcal S}^h_\bullet(X))$. This concludes the proof of Propostion \ref{Rothprop}.
\end{proof}

\begin{rem}[Speculative]

The equivalence (\ref{RothCorequation}) can presumably be upgraded to one of infinite loop spaces, and should hold also for $d<5$ (we need this assumption in our statement only because of the use of surgery theory in the proof). The argument should be completely "surgery-free" but still similar to that of Theorem \ref{ThmB}, by replacing the block moduli spaces $\widetilde{\mathcal M}^{h/s}_\bullet$ with the $L$-theory semi-simplicial sets $\mathsf{L}^{h/s}_\bullet$ as defined in \cite[$\S$2]{QuinnSSP}. More generally, an equivalence of spectra $\Sigma^{-d}\operatorname{hofib}\left(\mathbb{L}^s(M)\to \mathbb{L}^h(M)\right)\simeq H\Wh(M)^{tC_2}$ should hold.
\end{rem}

\begin{rem}[Relation to Weiss--Williams II]
    As will be explained in Appendix \ref{appendixB}, Theorem \ref{ThmB} is closely related to the first part of the seminal series \textit{Automorphisms of Manifolds and Algebraic $K$-Theory} by Weiss and Williams \cite{WWI}. That said, the techniques we have used in this section are reminiscent of those in Part II of the same series \cite{WWII}, as was pointed out to us by the anonymous referee, to whom we are grateful. Let us briefly comment on this now.

    Given a ring with anti-involution $R$ (e.g., $R = \Z[\pi_1 M]$ with the anti-involution described in~(\ref{Zpi1antiinvolution})), the authors of~\cite{WWII} construct a map from the $L$-theory spectrum of $R$ (with various decorations, such as $h$ and $s$) to $\mathrm{K}(R)^{tC_2}$, the Tate construction of the algebraic $K$-theory spectrum of $R$ (with the corresponding decoration). As noted in~\cite[p.~52]{WWII}, these maps can be used to derive ``higher order'' Rothenberg exact sequences, which are not quite the one we study in Proposition~\ref{Rothprop}---our version concerns the ``change of decoration'' from~$h$ to~$s$, whereas theirs arise from the Postnikov truncations~$\tau_{\geq *}\mathrm{K}(R)$.

To construct these maps, they present a simplicial resolution of $\mathrm{K}(R)$ given by
\[
[n] \mapsto \mathrm{K}(\Fun(\face_n, \mathsf{Ch}_R)),
\]
where $\mathsf{Ch}_R$ denotes the Waldhausen category of chain complexes of projective left $R$-modules. They then introduce a notion of \emph{duality} in $\mathrm{K}(\Fun(\face_n, \mathsf{Ch}_R))$ of Poincaré duality flavour~\cite[pp.~77--78]{WWII}, which is reminiscent of our \emph{face-horn} duality~\eqref{facehornduality}. By similar techniques to those of Vogell~\cite{VogellInvolution}, this duality upgrades $\mathrm{K}(\Fun(\face_\bullet, \mathsf{Ch}_R))$ to a simplicial $C_2$-spectrum.

By contrast, our work concerns only the (reduced) first algebraic $K$-theory group of $R$, rather than the entire spectrum. For instance, given an object $C \in \Fun(\face_n, \mathsf{Ch}_R)$ such that $C(\sigma)$ is free and finitely generated for all $\sigma \subset \Delta^n$ and such that $C(\sigma) \to C(\xi)$ is a quasi-isomorphism for all $\sigma \subset \xi \subset \Delta^n$, we focus on the functor $\face_n \to \widetilde{K}_1(R)$ that assigns to each face inclusion $\sigma \subset \xi$ the torsion of $C(\sigma) \to C(\xi)$. On a different note, our $\Falg(-)$-construction models the homotopy $C_2$-orbit spectrum $H(-)_{hC_2}$, as opposed to the $C_2$-spectrum $H(-)$.

In any case, even though our work and that in~\cite{WWII} are clearly related, we have not yet fully worked out the details of how they connect, but it would be very interesting to do so.

\end{rem}

\section{\texorpdfstring{Proof of Theorem \ref{diffdifference}($i$)}{Proof of Theorem A(i)}}\label{section4}
By analysing the lower-degree portion of the long exact sequence of homotopy groups associated to the homotopy pullback of Theorem \ref{ThmB}, we propose a general strategy to prove Theorem \ref{diffdifference}$(i)$ (see Proposition \ref{conditionprop}). We then present an example of an $h$-cobordism $W: L\hcob M$, where $L$ is as in the statement of Theorem \ref{diffdifference}, which satisfies the conditions of the proposed strategy. All throughout let $M^d$ denote a closed smooth manifold of dimension $d\geq 5$.

\subsection{A general strategy}
From the homotopy cartesian square of the homotopy fibre $F_\bullet(M)$ (see (\ref{geomFddefn})) we obtain an associated long exact sequence of homotopy groups
\begin{equation}\label{lesMhMsF}
\begin{tikzcd}[column sep = 13 pt, row sep=9 pt]
\dots \rar&\pi_n(F_\bullet(M))\rar &\pi_n(\widetilde{\mathcal M}^s,\{M\})\rar &\pi_n(\widetilde{\mathcal M}^h,\{M\})\rar&\dots& \\
&\dots\rar &\pi_1(\widetilde{\mathcal M}^h,\{M\})\rar["\partial"] &\pi_0(F_\bullet(M))\rar &\pi_0(\widetilde{\mathcal M}^s)\rar &\pi_0(\widetilde{\mathcal{M}}^h).
\end{tikzcd}
\end{equation}
For $n\geq 1$, the boundary map $\partial: \pi_n(\widetilde{\mathcal M}^h,\{M\})\to \pi_{n-1}(F_\bullet(M))$ sends an $n$-cycle $W^{d+n}\in\widetilde{\mathcal M}^h_n$ based at $M$ to $W$ as an $(n-1)$-cycle in $F_{\bullet}(M)$. So the image of the lowest-degree boundary map $\partial: \pi_1(\widetilde{\mathcal M}^h,\{M\})\to \pi_0(F_\bullet(M))$ consists of those classes represented by $h$-cobordisms $W: M\hcob M$. 

\begin{defn}
An $h$-cobordism $W: M\hcob M'$ is said to be \textbf{inertial} \cite[Defn. 2.1]{JahrenKwasikInertiaHcobs} if $M'$ is diffeomorphic to $M$. The set of inertial $h$-cobordisms starting at $M$ (up to diffeomorphism rel $M$) is denoted by $I(M)\subset h\mathrm{Cob}(M)\cong\Wh(M)$.
\end{defn}

\begin{exm}
Given an $h$-cobordism $W: M\hcob M'$, denote $\overline W:M'\hcob M$ for $W$ with the reversed cobordism direction. The \textit{double} $D(W):=\overline W\circ W=W\cup_{M'}\overline{W}$ \cite[p. 400]{MilnorWhiteheadTorsion} is an inertial $h$-cobordism $M\hcob M$ with torsion (see (\ref{hcobdualformula}) and (\ref{torsofhcobcomposition}))
$$
\tau(D(W), M)=\tau(W,M)+(-1)^{d}\overline{\tau}(W,M).
$$
The subgroup of \textit{double} $h$-cobordisms of $M^d$,
\begin{equation}\label{doublesubgroup}
\mathcal D(M):=\{\sigma+(-1)^d\overline\sigma: \sigma\in \Wh(M)\}\subset \Wh(M),
\end{equation}
is therefore a subset of $I(M)$ too. Also observe from (\ref{FMhtpygroups}) that 
$$
H_0(C_2;\Wh(M))=\Wh(M)/\mathcal D(M).
$$
\end{exm}

\begin{lem}
Let $\frac{I(M)}{\mathcal D(M)}$ denote the image of $I(M)$ under the projection $\Wh(M)\twoheadrightarrow{} H_0(C_2;\Wh(M))$. Under the isomorphism $\pi_0(F_\bullet(M))\cong H_0(C_2;\Wh(M))$ established in (\ref{FMhtpygroups}),
$$
\im\left\{\partial: \pi_1(\widetilde{\mathcal M}^h,\{M\})\to \pi_0(F_\bullet(M))\cong\frac{\Wh(M)}{\mathcal D(M)}\right\}=\frac{I(M)}{\mathcal D(M)}.
$$
\end{lem}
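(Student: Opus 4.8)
The plan is to unwind the two identifications already at hand. As noted just before the statement, $\im(\partial)$ is precisely the set of classes represented by self-$h$-cobordisms $W: M\hcob M$; and under the isomorphism $\pi_0(F_\bullet(M))\cong\Wh(M)/\mathcal D(M)$ of (\ref{FMhtpygroups}), the class of a $0$-simplex $W\in F_0(M)$ corresponds to $[\tau(W,M)]$. The latter is a direct consequence of Propositions \ref{Fdgeomvsalg} and \ref{quasiisoprop}: under $\tau_{(-)}: F_\bullet(M)\xrightarrow{\simeq}\Falg_\bullet(\Wh(M))$ the map $\psi_0$ sends $\tau_W$ to $\tau_W(\Delta^1,\langle0\rangle)=\tau(W_0\hookrightarrow W)=\tau(W,M)$, which is exactly the computation already carried out in Section \ref{Rothsection}. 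Combining these,
\[
\im(\partial)=\bigl\{\,[\tau(W,M)]\in\Wh(M)/\mathcal D(M)\ :\ W: M\hcob M\,\bigr\}.
\]

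For the inclusion $\im(\partial)\subseteq I(M)/\mathcal D(M)$: a self-$h$-cobordism $W: M\hcob M$ is in particular inertial, so $\tau(W,M)\in I(M)$ by the definition of $I(M)$, and hence $[\tau(W,M)]$ belongs to the image of $I(M)$ in $\Wh(M)/\mathcal D(M)$.

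For the reverse inclusion, fix $\tau\in I(M)$. By Theorem \ref{scobthm} there is an $h$-cobordism $W_\tau: M\hcob M'$ with $\tau(W_\tau,M)=\tau$, and since $\tau$ is inertial we may pick a diffeomorphism $g: M'\xrightarrow{\cong}M$. Gluing the mapping cylinder $M_g$ (with collars, as in the proof of Proposition \ref{Fdgeomvsalg}) onto the $M'$-end produces an $h$-cobordism $W:=M_g\circ W_\tau$; since $M_g: M'\scob M$ is a trivial $s$-cobordism, the composition rule (\ref{torsofhcobcomposition}) gives $\tau(W,M)=\tau(W_\tau,M)+(h^{W_\tau})_*^{-1}\tau(M_g,M')=\tau$. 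As any two embeddings of a compact manifold into $\R^\infty$ are isotopic, $W$ may be positioned in $\R^\infty\times\Delta^1$ so that $W_0=W_1=M$; thus it represents a loop at the vertex $M$, and $[\tau]=[\tau(W,M)]\in\im(\partial)$. The one place that merits care is this last step — one must make sure that $M_g\circ W_\tau$ really returns to the vertex $M$ and not merely to a diffeomorphic manifold — but it is settled by the two standard facts just used (the mapping cylinder of a diffeomorphism is a trivial $s$-cobordism, and embeddings of a compact manifold into $\R^\infty$ are unique up to isotopy).
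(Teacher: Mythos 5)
Your proof is correct and takes essentially the same route as the paper: the forward inclusion is immediate from the observation that a loop at $M$ in $\widetilde{\mathcal M}^h_\bullet$ is a self-$h$-cobordism $M\hcob M$ (hence inertial), and the reverse inclusion is obtained exactly as in the paper by composing an inertial $h$-cobordism $W_\tau\colon M\hcob M'$ with the mapping cylinder of a diffeomorphism $M'\cong M$, which preserves the torsion and produces a genuine loop at the vertex $M$. The only cosmetic difference is that you spell out the re-embedding into $\R^\infty\times\Delta^1$ at the end, which the paper leaves implicit.
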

\begin{proof}
The inclusion $(\subset)$ is immediate. Conversely if $W: M\leadsto M'$ is an inertial $h$-cobordism with $\phi: M\cong M'$, let $W': M\leadsto M$ denote the $h$-cobordism $M_{\phi^{-1}}\circ W$. Recall that the isomorphism $\pi_0(F_\bullet(M))\cong H_0(C_2;\Wh(M))$ sends the class represented by $W$ to that of its torsion $\tau(W,M)=\tau(W',M)$. As $W'$ represents a class in $\pi_1(\widetilde{\mathcal M}^h,\{M\})$, we are done.
\end{proof}

Recall from Proposition \ref{connectedcompsofMs} and (\ref{connectedcompsofMh}) that $B\bdiff(M)$ and $B\bdiffh(M)$ are the connected components of $\widetilde{\mathcal M}^s$ and $\widetilde{\mathcal M}^h$, respectively, which contain $M^d$ as basepoint. We will denote $\bdiffh/\bdiff(M)\subset F_\bullet(M)$ for the union of connected components corresponding to $\frac{I(M)}{\mathcal D(M)}$. By exactness of (\ref{lesMhMsF}), these are exactly the components of $F_\bullet(M)$ that map to $B\bdiff(M)\subset \widetilde{\mathcal M}^s$. We thus obtain a fibration sequence
\begin{equation}\label{bdiffhmodbdifffibrationseq}
\begin{tikzcd}
\bdiffh/\bdiff(M)\rar & B\bdiff(M)\rar & B\bdiffh(M).
\end{tikzcd}    
\end{equation}

For the remaining of the section, let $W^{d+1}: L^d\hcob M^d$ be some $h$-cobordism with torsion $\tau:=\tau(W,L)\in \Wh(L)$. The homotopy long exact sequences of the fibration (\ref{bdiffhmodbdifffibrationseq}) for $L$ and $M$ yield the diagram
\begin{equation}\label{bdiffhextensionLM}
\begin{tikzcd}[column sep= 11 pt]
\pi_2(B\bdiffh(L))\drar[phantom, "(\dagger)"]\ar[d, "\cong", "\substack{\text{base pt.}\\ \mathrm{change}}"']\rar["\partial"]&H_1(C_2;\Wh(L))\ar[d,"\cong", "h^W_*"']\rar& \pi_1(B\bdiff(L))\rar &\pi_{1}(B\bdiffh(L))\ar[d, "\cong", "\substack{\text{base pt.}\\ \mathrm{change}}"']\ar[r, two heads, "\partial"] &\frac{I(L)}{\mathcal D(L)}\\
\pi_2(B\bdiffh(M))\rar["\partial"]&H_1(C_2;\Wh(M))\rar& \pi_1(B\bdiff(M))\rar &\pi_{1}(B\bdiffh(M))\ar[r, two heads, "\partial"] &\frac{I(M)}{\mathcal D(M)},
\end{tikzcd}
\end{equation}
where we have used the isomorphism $\pi_n(\bdiffh/\bdiff(-))\cong H_n(C_2;\Wh(-))$ for $n\geq 1$ from Proposition \ref{Fdgeomvsalg}. We are trying to compare the middle terms of the two extensions above, since
$$
\pi_1(B\bdiff(-))\cong\pi_0(\bdiff(-))=:\widetilde{\Gamma}(-).
$$
We first study the left part of the extensions in (\ref{bdiffhextensionLM}). 
\begin{prop}\label{leftextensionlemma}
For $n\geq 2$, the following square commutes:
$$
\begin{tikzcd}
\pi_n(B\bdiffh(L))\ar[d, "\cong", "\substack{\text{\normalfont{base pt.}}\\ \mathrm{change}}"']\rar["\partial"] & H_{n-1}(C_2; \Wh(L))\ar[d,"\cong", "h^W_*"']\\
\pi_n(B\bdiffh(M))\rar["\partial"] &H_{n-1}(C_2; \Wh(M)).
\end{tikzcd}
$$
In particular, the square decorated by $(\dagger)$ in (\ref{bdiffhextensionLM}) commutes.
\end{prop}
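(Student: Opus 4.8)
The plan is to recognise the square as the $\pi_*$-shadow of the naturality of a fibration sequence under change of basepoint along the edge $W$, and then to identify the resulting monodromy with $h^W_*$.

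\smallskip

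First, since $L$ and $M$ are $h$-cobordant, the semi-simplicial sets $B\bdiffh(L)_\bullet$ and $B\bdiffh(M)_\bullet$ coincide; write $\mathcal{N}$ for this common path-component of $\widetilde{\mathcal{M}}^h$, so that $W$ is an edge in $\mathcal{N}$ from the vertex $L$ to the vertex $M$. The two maps $\partial$ in the statement are the boundary homomorphisms in the homotopy long exact sequences of the fibrations $\bdiffh/\bdiff(-)\to B\bdiff(-)\to\mathcal{N}$ of (\ref{bdiffhmodbdifffibrationseq}), based at $L$ and at $M$. For $n\geq 2$ we have $n-1\geq 1$, and since $F_\bullet(-)$ is an infinite loop space (Theorem \ref{ThmB}), the inclusion of the non-empty union of path-components $\bdiffh/\bdiff(-)\subset F_\bullet(-)$ induces an isomorphism on $\pi_{n-1}$; hence, under Proposition \ref{Fdgeomvsalg}, these boundary maps agree with the boundary maps $\partial\colon\pi_n(\mathcal{N},-)\to\pi_{n-1}(F_\bullet(-))\cong H_{n-1}(C_2;\Wh(-))$ of the homotopy fibration sequence $F_\bullet(-)\to\widetilde{\mathcal{M}}^s\to\widetilde{\mathcal{M}}^h$ underlying the homotopy cartesian square (\ref{geomFddefn}). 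Appealing now to the naturality of the homotopy long exact sequence of a fibration under change of basepoint, the edge $W\colon L\to M$ induces a commuting square whose horizontals are these two $\partial$'s, whose left vertical is exactly the base-point change isomorphism $\pi_n(\mathcal{N},L)\xrightarrow{\cong}\pi_n(\mathcal{N},M)$ of the statement, and whose right vertical is $\pi_{n-1}$ of the monodromy equivalence $\mu_W\colon F_\bullet(L)\to F_\bullet(M)$ along $W$. So the proof reduces to showing that, for $n-1\geq 1$, the map $\pi_{n-1}(\mu_W)$ agrees with $h^W_*\colon H_{n-1}(C_2;\Wh(L))\to H_{n-1}(C_2;\Wh(M))$; note that $h^W_*$ is $C_2$-equivariant for the actions $t\cdot\tau=(-1)^{d-1}\overline\tau$, because $h^W$ is a homotopy equivalence of closed $d$-manifolds and so $(h^W)^*w_1(M)=w_1(L)$, whence $h^W_*$ intertwines the two algebraic involutions.

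\smallskip

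To identify the monodromy I would pass to the homotopy-fibre description of $F_\bullet(-)$, in which a point is a pair $(X,\omega)$ consisting of a point $X$ of $\widetilde{\mathcal{M}}^s$ together with an $h$-cobordism $\omega\colon N\hcob X$, and the monodromy along $W$ is $\mu_W(X,\omega)=(X,\omega\circ\overline W)$, where $\overline W\colon M\hcob L$ is $W$ with reversed direction. Under $\tau_{(-)}\colon F_\bullet(N)\xrightarrow{\simeq}\Falg_\bullet(\Wh(N))$, the torsion functor recording such a point has value $\tau(\omega,N)$ on $(\Delta^{\bullet+1},\langle0\rangle)$; so by the composition rule (\ref{torsofhcobcomposition}) for torsions of $h$-cobordisms, together with the identity $(h^{\overline W})_*^{-1}=h^W_*$ (which follows since $h^{\overline W}$ is a homotopy inverse to $h^W$), the monodromy $\mu_W$ corresponds under $\tau_{(-)}$ to the map $\Falg_\bullet(\Wh(L))\to\Falg_\bullet(\Wh(M))$ obtained by post-composing the map $(h^W_*)_\bullet$ induced by functoriality of $\Falg_\bullet(-)$ in its $\Z[C_2]$-module argument with translation by the fixed class $[\tau(W,M)]\in H_0(C_2;\Wh(M))=\Wh(M)/\mathcal{D}(M)$. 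On $\pi_{n-1}$ with $n-1\geq 1$ the translation acts trivially after the canonical identification of homotopy groups at different basepoints of the infinite loop space $F_\bullet(M)$, while $(h^W_*)_\bullet$ induces $h^W_*$ on $\pi_{n-1}\cong H_{n-1}(C_2;\Wh(-))$ by Proposition \ref{quasiisoprop}. Hence $\pi_{n-1}(\mu_W)=h^W_*$, the square commutes for all $n\geq 2$, and the square $(\dagger)$ in (\ref{bdiffhextensionLM}) is the case $n=2$.

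\smallskip

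The step I expect to be the main obstacle is the bookkeeping of basepoint components. The monodromy $\mu_W$ does not preserve basepoint components: it sends the basepoint of $F_\bullet(L)$, which lies over the component $B\bdiff(L)$ of $\widetilde{\mathcal{M}}^s$, to a point still lying over $B\bdiff(L)$ rather than over $B\bdiff(M)$ (the torsion of the resulting $h$-cobordism $M\hcob L$ is $\tau(W,M)$, not $0$). One therefore has to invoke the infinite loop space structure of $F_\bullet(-)$ both to make sense of the right-hand vertical as a homomorphism of $H_{n-1}(C_2;\Wh(-))$'s independently of the component and to kill the translation correction; this is genuinely essential, which is why the statement is restricted to $n\geq 2$ and fails on $\pi_0$, where base-point change is only affine. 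A secondary, purely routine point is to verify the precise effect of $\mu_W$ on the functors $\tau_{(-)}$, which is a slightly delicate but mechanical application of the composition rule (\ref{whiteheadtorsioncomposition}) and the duality formula (\ref{hcobdualformula}).
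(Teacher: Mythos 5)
Your approach is genuinely different from the paper's. The paper proves commutativity directly: it exhibits the basepoint-change map on $\pi_n(B\bdiffh(-))$ explicitly as $V\mapsto W_\#V:=V\cup_{L\times\partial\Delta^n}(W\times\partial\Delta^n)$, factors the inclusion $M\hookrightarrow W_\#V$ through $W$ and $P:=V\cup_L W$, and computes $\tau(W_\#V,M)$ by hand using the inclusion-exclusion, product, and composition rules for Whitehead torsion, obtaining $\tau(W_\#V,M)=h^W_*\tau(V,L)+(-1)^{n-1}(h^W_*\tau+(-1)^{d+n-1}\overline{h^W_*\tau})$, whose correction term visibly dies in $H_{n-1}(C_2;\Wh(M))$. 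Your argument instead tries to deduce the square from abstract naturality of the long exact sequence under basepoint change plus an identification of the monodromy.

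The gap lies precisely where you flag it, but I do not think the fix you propose closes it. Naturality of the long exact sequence of a fibration under basepoint change requires a path in the \emph{total} space. Here $L$ and $M$ lie in different components of $\widetilde{\mathcal M}^s$, so there is no such path; the only available lift of $W$ (in a fibrant replacement) starts at $L\times I\in F_\bullet(L)$ and ends at the $0$-simplex $\overline W\in F_\bullet(M)$, which lies in a different component of $F_\bullet(M)$ — and of $\widetilde{\mathcal M}^s$ — than the preferred basepoint $M\times I$. Naturality therefore gives you a commuting square whose bottom arrow is a boundary map $\partial_M'$ in the long exact sequence based at $\overline W$, not the boundary map $\partial_M$ based at $M\times I$ that appears in \ref{bdiffhextensionLM}. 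These are boundary maps of two genuinely different long exact sequences (whose middle terms are $\pi_n(B\bdiff(L))$ and $\pi_n(B\bdiff(M))$, respectively), with the same source $\pi_n(\mathcal N,M)$ but different targets. The infinite loop space structure on $F_\bullet(M)$ gives a canonical identification of $\pi_{n-1}$ at the two fibre basepoints, but since $\widetilde{\mathcal M}^s\to\widetilde{\mathcal M}^h$ is not a fibration of infinite loop spaces, it does not by itself force $\partial_M$ and $\partial_M'$ to agree under that identification, nor allow you to declare the "translation correction" irrelevant. Establishing $\partial_M=\partial_M'$ (modulo the canonical identification) is exactly the content of the proposition, and resolving it appears to require a torsion computation of the same nature as the paper's — at which point the structural reduction buys little. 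The secondary claim, that the monodromy corresponds under $\tau_{(-)}$ to $(h^W_*)_\bullet$ followed by translation, is plausible but also left unverified: checking it in all simplicial degrees (with the face-horn dualities intact) again amounts to running through the same torsion bookkeeping.

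So: the conclusion is correct and your intuition about \emph{why} the statement is restricted to $n\geq 2$ is in the right ballpark, but the proof as written does not close the identified gap, and the promised shortcut via infinite loop space structure is not available in this fibration.
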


\begin{proof}
The basepoint change map sends an $n$-cycle $V^{d+n}\in B\bdiffh(L)_n$ to the manifold (see Figure \ref{Basepoint Change Figure})
$$
W_\#V:=V\cup_{L\times \partial\Delta^n} (W\times \partial \Delta^n).
$$
The union is made along the boundary $\partial V=L\times \partial\Delta^n$. The manifold $W_\#V$ is naturally stratified over $\Delta^n$, and clearly represents an $n$-cycle in $B\bdiffh(M)$. As mentioned before, the boundary map $\partial: \pi_n(B\bdiffh(L))\longrightarrow H_{n-1}(C_2;\Wh(L))$ sends $[V]$ to the class represented by $\tau(V,V_0)=\tau(V,L)$ in $H_{n-1}(C_2;\Wh(L))$. We thus need to show that
$$
\tau(W_\#V,M)\equiv h_*^W\tau(V,L)\mod \{\sigma+(-1)^{d+n-1}\overline \sigma: \sigma\in \Wh(L)\}.
$$
We compute $\tau(W_\#V,M)$ directly. For any subspaces $A, B\subset W_\#V$ with $A\subset B$, write $i_A^B$ for the inclusion. If $P:=V\cup_{V_0=L}W$ (see Figure \ref{Basepoint Change Figure}), we can factor the inclusion $i_M^{W_\#V}:M=M\times\{0\}\xhookrightarrow{}W_\#V$ as
$$
\begin{tikzcd}
M\dar[hook, "\simeq"]\ar[rr,hook, "\simeq"]&&W_\#V\\
W\ar[rr,hook, "\simeq"]&&P\ar[u,hook, "\simeq"].
\end{tikzcd}
$$
We compute the torsion of these three maps using the inclusion-exclusion principle (\ref{whiteheadtorsincexc}):
\begin{align*}
    \tau(W,M)&=(-1)^{d}h^W_*\overline\tau,\\
    \tau(P,W)&=(i_L^W)_*\tau(V,L)+(i_W^W)_*\tau(W,W)-(i_L^W)_*\tau(L,L)\\
    &=(i_L^W)_*\tau(V,L),\\
    \tau(W_\#V,P)&=(i_V^P)_*\tau(W_\#V,V)+(i_W^P)_*\tau(W,W)-(i_L^P)_*\tau(W,L)\\
    &=(i^P_V)_*(i_{\partial V}^{V})_*\tau(W\times\partial\Delta^n,L\times\partial\Delta^n)-(i_L^P)_*\tau\\
    &=\chi(\partial\Delta^n)\cdot (i^P_{L})_*\tau-(i_L^P)_*\tau\\
    &=(-1)^{n-1}(i^P_{L})_*\tau.
\end{align*}
In the penultimate line we have used that $i^P_V\circ i^{V}_{\partial V}\circ i_{L\times 0}^{L\times\partial\Delta^n}=i_L^P$ and the product rule (\ref{eulerchartors}) of $\tau(-)$, for which we need the condition $n\geq 2$ for $\partial\Delta^n$ to be connected. By the composition rule (\ref{whiteheadtorsioncomposition}), we get
\begin{align*}
    \tau(W_\#V,M)&=(-1)^dh^W_*\overline\tau+(i_M^W)^{-1}_*(i_L^W)_*\tau(V,L)+(-1)^{n-1}(i_M^P)^{-1}_*(i^P_{L})_*\tau\\
    &=(-1)^dh^W_*\overline\tau+h_*^W\tau(V,L)+(-1)^{n-1}h^W_*\tau\\
    &=h_*^W\tau(V,L)+(-1)^{n-1}\left(h^W_*\tau+(-1)^{d+n-1}\overline{h^W_*\tau}\right),
\end{align*}
where in the second line we have used the commutative diagram
$$
\begin{tikzcd}
M\rar[hook,"\simeq"]\ar[dr, hook, "\simeq"] &W\dar[hook, "\simeq"]&L\ar[l, hook', "\simeq"']\ar[ld, hook', "\simeq"]\\
&P,&
\end{tikzcd}
$$
so $(i_M^P)^{-1}_*(i^P_{L})_*=(i_M^W)^{-1}_*(i^W_{L})_*=h^W_*$. This finishes the proof.
\end{proof}
\begin{figure}[ht]
    \centering
    \includegraphics[scale = 0.1]{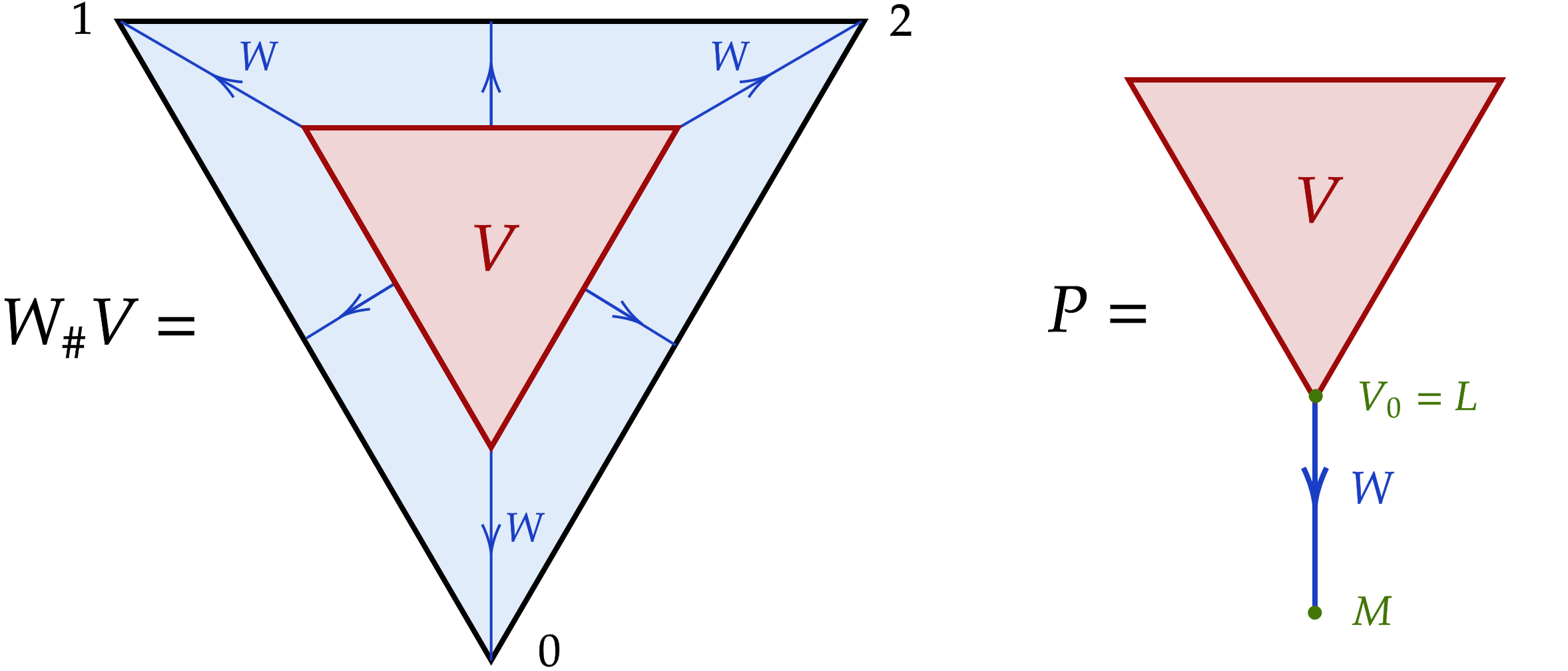}
    \caption{Illustration of $W_\#V$ and $P:=V\cup_{L\times\{0\}}W$ when $n=2$.}
    \label{Basepoint Change Figure}
\end{figure}

In the next section we will focus on the task of finding an example of $W:L\hcob M$ for which $\widetilde{\Gamma}(L)\neq \widetilde{\Gamma}(M)$ as in Theorem \ref{diffdifference}($i$). By the previous result, we should make the right hand sides of the two extensions in (\ref{bdiffhextensionLM}) differ. In order to do so, we will use the following.

\begin{prop}\label{conditionprop}
Let $W:L\hcob M$ be such that

\hspace{50pt}\begin{enumerate*}[label=\protect\circled{\normalfont{\Roman*}}]
    \item\label{condI} $\frac{I(L)}{\mathcal D(L)}=0$ but $\frac{I(M)}{\mathcal D(M)}\neq 0$,\hspace{30pt}
    \item\label{condII} $\pi_1(B\bdiff(L))$ is finite.
\end{enumerate*}

\noindent Then $\frac{I(M)}{\mathcal D(M)}$ is finite of cardinality $N>1$ and
$$
|\pi_1(B\bdiff(L))|=N\cdot|\pi_1(B\bdiff(M))|<\infty.
$$
In particular, $\widetilde{\Gamma}(L)\neq \widetilde{\Gamma}(M)$.
\end{prop}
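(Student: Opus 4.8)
The plan is to compare the homotopy long exact sequences of the fibration sequence (\ref{bdiffhmodbdifffibrationseq}) for $L$ and for $M$. From each I will read off an orbit--stabiliser identity and a short exact sequence, match the outer terms across $L$ and $M$ using Proposition \ref{leftextensionlemma} and the basepoint-change isomorphisms, and then conclude by a count of cardinalities.

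First, fix $X\in\{L,M\}$ and recall from Proposition \ref{Fdgeomvsalg} that $\pi_n(\bdiffh/\bdiff(X))\cong H_n(C_2;\Wh(X))$ for $n\ge1$, while $\pi_0(\bdiffh/\bdiff(X))=\frac{I(X)}{\mathcal D(X)}$. Since $B\bdiff(X)$ is connected, exactness of (\ref{bdiffhextensionLM}) at $\pi_0(\bdiffh/\bdiff(X))$ shows that $\partial\colon\pi_1(B\bdiffh(X))\to\frac{I(X)}{\mathcal D(X)}$ is surjective; as $\partial$ is the fibre-transport action of $\pi_1(B\bdiffh(X))$ on the pointed set $\pi_0(\bdiffh/\bdiff(X))$ evaluated at the basepoint, orbit--stabiliser yields
$$|\pi_1(B\bdiffh(X))|=\Big|\tfrac{I(X)}{\mathcal D(X)}\Big|\cdot|\mathrm{Stab}_X|,\qquad \mathrm{Stab}_X:=\operatorname{im}\big(\pi_1(B\bdiff(X))\to\pi_1(B\bdiffh(X))\big).$$
On the other hand, exactness of $\pi_2(B\bdiffh(X))\xrightarrow{\partial}H_1(C_2;\Wh(X))\to\pi_1(B\bdiff(X))\to\pi_1(B\bdiffh(X))$ together with the fact that $H_1(C_2;\Wh(X))$ is abelian gives a short exact sequence $0\to\bar A_X\to\pi_1(B\bdiff(X))\to\mathrm{Stab}_X\to0$, where $\bar A_X:=\operatorname{coker}\big(\partial\colon\pi_2(B\bdiffh(X))\to H_1(C_2;\Wh(X))\big)$; so $|\pi_1(B\bdiff(X))|=|\bar A_X|\cdot|\mathrm{Stab}_X|$ when these are finite.

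Next I compare the two cases. By \ref{condI} the set $\frac{I(L)}{\mathcal D(L)}$ is trivial, so $\mathrm{Stab}_L=\pi_1(B\bdiffh(L))$; this is a quotient of $\pi_1(B\bdiff(L))$, finite by \ref{condII}, so $\pi_1(B\bdiffh(L))$ is finite, and $\bar A_L$ is finite, being a quotient of the finite group $H_1(C_2;\Wh(L))$. Since $B\bdiffh(L)$ and $B\bdiffh(M)$ are the same semi-simplicial set with different basepoints, the basepoint-change maps are isomorphisms $\pi_n(B\bdiffh(L))\cong\pi_n(B\bdiffh(M))$ for all $n$, so $|\pi_1(B\bdiffh(M))|=|\pi_1(B\bdiffh(L))|<\infty$. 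Moreover, by Proposition \ref{leftextensionlemma} the square $(\dagger)$ in (\ref{bdiffhextensionLM}) commutes, and its vertical arrows (basepoint change on $\pi_2(B\bdiffh(-))$ and $h^W_*$ on $H_1(C_2;\Wh(-))$) are isomorphisms, so it induces an isomorphism $\bar A_L\cong\bar A_M$ on cokernels; thus $|\bar A_M|=|\bar A_L|<\infty$. Feeding these finitenesses back into the orbit--stabiliser identity for $X=M$ shows $N:=\big|\tfrac{I(M)}{\mathcal D(M)}\big|$ and $|\mathrm{Stab}_M|$ are finite, whence $|\pi_1(B\bdiff(M))|=|\bar A_M|\cdot|\mathrm{Stab}_M|<\infty$.

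Finally, combining the identities,
$$N\cdot|\pi_1(B\bdiff(M))|=N\cdot|\bar A_M|\cdot|\mathrm{Stab}_M|=|\bar A_M|\cdot|\pi_1(B\bdiffh(M))|=|\bar A_L|\cdot|\pi_1(B\bdiffh(L))|=|\pi_1(B\bdiff(L))|,$$
where the second equality is the orbit--stabiliser identity for $M$ and the last uses $\mathrm{Stab}_L=\pi_1(B\bdiffh(L))$. Since $\frac{I(M)}{\mathcal D(M)}\ne0$ we have $N>1$, and $|\widetilde\Gamma(M)|=|\pi_1(B\bdiff(M))|\ge1$ is finite, so $|\widetilde\Gamma(L)|=N\cdot|\widetilde\Gamma(M)|>|\widetilde\Gamma(M)|$ and in particular $\widetilde\Gamma(L)\ne\widetilde\Gamma(M)$. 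The genuine content is all packaged in Proposition \ref{leftextensionlemma}, so the only step that really calls for care is the pointed-set bookkeeping: a priori $\pi_0(\bdiffh/\bdiff(X))$ is merely a pointed set, so I must run the orbit--stabiliser step through the fibre-transport action and verify that the orbit of the basepoint is exactly the image of $\partial$, equivalently all of $\frac{I(X)}{\mathcal D(X)}$ by connectedness of $B\bdiff(X)$; everything else is the standard exact sequence of a fibration and elementary counting.
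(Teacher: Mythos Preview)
Your argument is correct and follows essentially the same route as the paper: both compare the two rows of (\ref{bdiffhextensionLM}), use Proposition \ref{leftextensionlemma} to identify the cokernels $\bar A_L\cong\bar A_M$ (the paper writes these as $H_1(C_2;\Wh(-))/\im\partial$), and count cardinalities via the stabiliser $K=\mathrm{Stab}_M$ of the basepoint in $\frac{I(M)}{\mathcal D(M)}$. Your explicit use of orbit--stabiliser for the $\pi_1$-action on $\pi_0$ of the fibre is a welcome clarification, since $\frac{I(M)}{\mathcal D(M)}$ is only a pointed set (cf.\ Remark \ref{remarkinertianotsubgroup}); the paper's proof leaves this implicit.
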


\begin{proof}
If $\frac{I(L)}{\mathcal D(L)}=0$ then $\pi_1(B\bdiff(L))$ surjects onto $\pi_1(B\bdiffh(L))$, and hence the latter is finite too. As $\pi_1(B\bdiffh(M))\cong \pi_1(B\bdiffh(L))$ surjects onto $\frac{I(M)}{\mathcal D(M)}$, this is also finite, say of cardinality $N>1$ since $\frac{I(M)}{\mathcal D(M)}\neq 0$. Write
$$
K:=\left\{x\in \pi_1(B\bdiffh(M))\cong\pi_1(B\bdiffh(L)): \partial x=0\in \frac{I(M)}{\mathcal D(M)}\right\}.
$$
We now have two extensions of finite groups
$$
\begin{tikzcd}[remember picture]
    0\rar & H_1(C_2; \Wh(L))/\im\partial\dar["\cong"]\rar & \pi_1(B\bdiff(L))\rar[two heads] & \pi_1(B\bdiffh(L))\rar &1\\
    0\rar & H_1(C_2; \Wh(M))/\im\partial\rar & \pi_1(B\bdiff(M))\rar[two heads] & K\rar &1,
\end{tikzcd}
$$
where the left vertical isomorphism is a consequence of Proposition \ref{leftextensionlemma}. Even though $\partial$ is a crossed homomorphism, we still have that $N\cdot |K|=|\pi_1(B\bdiffh(M))| =|\pi_1(B\bdiffh(L))|$, and therefore the result follows. \begin{tikzpicture}[overlay,remember picture]
\path (\tikzcdmatrixname-1-4) to node[midway,sloped]{$\supset$}
(\tikzcdmatrixname-2-4);
\end{tikzpicture}
\end{proof}

\subsection{\texorpdfstring{The candidate $W:L\hcob M$}{The candidate W:L->M}}\label{candidatesection}
Let $L^{2n-1}_{p}(r_1:\dots:r_n)$ denote the linear lens space with fundamental group $C_p$ and weights $r_1,\dots,r_n\mod p$, i.e., the quotient of the sphere $S^{2n-1}$ by the free (left) $C_p$-action given by
$$
t\cdot (z_1,\dots,z_n):=(\zeta^{r_1}z_1,\dots, \zeta^{r_n}z_n), \qquad (z_1,\dots,z_n)\in S^{2n-1}\subset \C^n,
$$
where $t\in C_p$ is the generator and $\zeta=\exp(2\pi\mathsf{i}/p)$. We identify $\pi_1(L^{2n-1}_p(r_1:\dots:r_n))$ with $C_p$ by sending the homotopy class represented by the loop 
$$
[0,1]\longrightarrow L^{2n-1}_p(r_1:r_2:\dots: r_n), \quad s\longmapsto [\zeta^{s\cdot r_1},0,\dots,0]
$$
to $t\in C_p$. The goal of this section is to prove

\begin{thm}\label{ThmAiequivalent}
Let $L$ be the lens space $L^{12k-1}_{7}(r_1:\dots: r_{6k})$ with
$$
r_1=\dots=r_k=1, \qquad r_{k+1}=\dots=r_{2k}=2, \quad \dots \quad r_{5k+1}=\dots=r_{6k}=6 \mod 7.
$$
The element $u:=2+2t-t^3-t^4-t^5$ is a unit in $\Z[C_7]$ with inverse $u^{-1}=1-2t+3t^2-3t^3+3t^4-2t^5+t^6$, and hence represents an element of $\Wh(L)$. Then the $h$-cobordism $W: L\hcob M$ with torsion $\tau(W,L)=u$ satisfies conditions \ref{condI} and \ref{condII} of Proposition \ref{conditionprop} with $N=\left|\frac{I(M)}{\mathcal D(M)}\right|=3$. In particular
$$
|\pi_1(B\bdiff(L))|=3\cdot|\pi_1(B\bdiff(M))|<\infty,
$$
and Theorem \ref{diffdifference}$(i)$ holds.
\end{thm}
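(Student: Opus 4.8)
The plan is to verify the two hypotheses of Proposition~\ref{conditionprop} for the $h$-cobordism $W := W_u : L \hcob M$ with $\tau(W,L) = u$, and then quote that proposition. First the algebra: that $u = 2 + 2t - t^3 - t^4 - t^5$ is a unit of $\Z[C_7]$ with the asserted inverse is a one-line multiplication using $t^7=1$, so $u$ does define a class in $\Wh(L) \cong \Wh(C_7)$. By the computation of Appendix~\ref{AppendixA}, $\Wh(C_7)$ is free abelian of rank two (the class number of $\Q(\zeta_7)$ being one kills $\widetilde{K}_0(\Z C_7)$, and $\mathrm{SK}_1(\Z C_7)=0$), and its algebraic involution is trivial, complex conjugation altering a unit of $\Z[\zeta_7]$ only by a root of unity (the Hasse unit index of a prime cyclotomic field being one). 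Since $d=12k-1$ is odd, the $C_2$-action of Theorem~\ref{ThmB} on $\Wh(L)$, namely $t\cdot\tau=(-1)^{d-1}\overline\tau=\overline\tau$, is therefore trivial; hence $\mathcal D(L)=\mathcal D(M)=0$, so that $\tfrac{I(L)}{\mathcal D(L)}=I(L)$, $\tfrac{I(M)}{\mathcal D(M)}=I(M)$, and $H_1(C_2;\Wh(L))\cong H_1(C_2;\Z^2)\cong(\Z/2)^2$ is finite --- one half of condition~\ref{condII}.

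For the remaining half of condition~\ref{condII}, the finiteness of $\pi_1(B\bdiff(L))=\widetilde\Gamma(L)$, I would invoke the surgery theory of high-dimensional lens spaces: the quadratic $L$-groups $L_\ast(\Z C_7)$ are finitely generated, the normal-invariant set $[L,G/O]$ and the low homotopy groups of $(G/O)^{L_+}$ are finite because $H^{4j}(L;\Q)=0$ for $j\geq 1$ (a lens space being a rational odd sphere), and $\pi_0(s\mathrm{Aut}(L))$ is finite. Combining these through the surgery exact sequence for block structure spaces and the fibration sequence relating $\widetilde{\mathcal S}^s(L)$, $s\mathrm{Aut}(L)$ and $B\bdiff(L)$ from Section~\ref{Rothsection} forces $\pi_1(B\bdiff(L))$ to be finite.

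The heart of the argument is condition~\ref{condI}, which I would reduce to arithmetic in $\Wh(C_7)$. Since the target $M_\tau$ of the $h$-cobordism $W_\tau$ with torsion $\tau$ is $h$-cobordant to $L$, it is a fake lens space carrying a preferred homotopy equivalence $h^{W_\tau}:L\to M_\tau$ of Whitehead torsion $\tau(h^{W_\tau})=\tau+(-1)^{d+1}\overline\tau=2\tau$; thus $W_\tau$ is inertial exactly when $M_\tau\cong L$. The surgery classification of fake lens spaces and their $h$-cobordisms (Wall, \emph{Surgery on compact manifolds}, Ch.~14, together with Reidemeister--Franz torsion and the $\rho$-invariants; cf.\ also \cite{JahrenKwasikInertiaHcobs}) then identifies $I(L)=\{\tau:M_\tau\cong L\}$ with an explicitly computable subgroup $\mathcal I(L)\subseteq\Wh(C_7)$ depending only on the weights $r_1,\dots,r_{6k}$ --- built from the Whitehead torsions of the homotopy self-equivalences of $L$ and their surgery-obstruction data. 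The weights are chosen (namely $k$ copies each of $1,2,\dots,6$) precisely so that $\mathcal I(L)=0$, using the output of Appendix~\ref{AppendixA}; hence $\tfrac{I(L)}{\mathcal D(L)}=0$. Running the same recipe for $M=M_u$ --- whose homotopy self-equivalences carry the same Whitehead torsions as those of $L$, as $M\simeq L$, but are now combined with the class $u$ --- yields a subgroup $\mathcal I(M)\subseteq\Wh(M)\cong\Wh(C_7)$ reindexed by $u$, and the explicit arithmetic of $u$ and $u^{-1}$ in $\Z[C_7]$ (again Appendix~\ref{AppendixA}) shows $\mathcal I(M)$ is cyclic of order exactly $3$. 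Thus $\tfrac{I(M)}{\mathcal D(M)}=I(M)=\mathcal I(M)$ is finite of cardinality $N=3>1$, so both conditions~\ref{condI} and~\ref{condII} hold.

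With the hypotheses verified, Proposition~\ref{conditionprop} gives $|\pi_1(B\bdiff(L))|=3\,|\pi_1(B\bdiff(M))|<\infty$; since $\pi_1(B\bdiff(X))=\pi_0(\bdiff(X))=\widetilde\Gamma(X)$, the block mapping class groups $\widetilde\Gamma(L)$ and $\widetilde\Gamma(M)$ are finite with $v_3(|\widetilde\Gamma(L)|)=v_3(|\widetilde\Gamma(M)|)+1$, which is Theorem~\ref{diffdifference}($i$). I expect the main obstacle to be precisely the translation of ``$W_\tau$ inertial'' into the membership condition ``$\tau\in\mathcal I(L)$'' together with the $K$-theoretic bookkeeping that pins down $\mathcal I(L)=0$ while forcing $\mathcal I(M)$ to have order exactly $3$ --- i.e.\ checking that the chosen weights and the chosen unit $u$ interact with the Franz/Reidemeister-torsion constraints and the normal-invariant realizability so as to produce the index $3$ and not some other number. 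Extracting the finiteness of $\pi_1(B\bdiff(L))$ cleanly from the block-surgery literature is a secondary technical point.
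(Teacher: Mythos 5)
Your framework is correct (verify conditions \ref{condI} and \ref{condII}, then quote Proposition \ref{conditionprop}), and several preliminary observations are right: the algebraic involution on $\Wh(C_7)$ is trivial (Bass), so $\mathcal D(L)=\mathcal D(M)=0$ and $H_1(C_2;\Wh(L))\cong(\Z/2)^2$ is finite. But the heart of the argument --- computing $I(L)$ and $I(M)$ --- contains a genuine error and a genuine gap. You assert that $I(L)$ and $I(M)$ can be identified with \emph{subgroups} $\mathcal I(L),\mathcal I(M)\subseteq\Wh(C_7)$, with $\mathcal I(M)$ ``cyclic of order exactly $3$.'' This cannot be right: $\Wh(C_7)\cong\Z^2$ is torsion-free, so it has no subgroup of order $3$. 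Indeed, the paper explicitly remarks (Remark \ref{remarkinertianotsubgroup}) that $I(M)$ is a three-element \emph{subset} of $\Wh(M)\cong\Z^2$ and therefore \emph{not} a subgroup. The actual mechanism is quite different: one constructs inertial $h$-cobordisms $V_\phi:=W\circ M_{\phi^{-1}}\circ\overline W:M\hcob M$ from self-diffeomorphisms $\phi$ of $L$, computes $\tau(V_\phi,M)=h^W_*(\phi_*u-u)$, produces orientation-preserving $\phi_i\in\diff(L)$ realizing every $\pi_1$-automorphism $t\mapsto t^i$ (this is where the choice of weights enters, via invariance of the $R$-torsion $\prod_{j=1}^6(t^j-1)^k$ under $t\mapsto t^i$, combined with Hsiang--Jahren to lift simple homotopy equivalences to diffeomorphisms), checks that $u$, $(\phi_2)_*u$, $(\phi_3)_*u$ are pairwise distinct while $(\phi_6)_*u=u$, and finally shows conversely that \emph{every} element of $I(M)$ arises as some $V_{\phi_i}$ using $I(L)=0$. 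None of this appears in your sketch beyond ``running the same recipe.''

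Two further attributions are off. The vanishing of $I(L)$ for linear lens spaces comes from Milnor's diffeomorphism classification of lens spaces (\cite[Cor.~12.12]{MilnorWhiteheadTorsion}), not from Appendix \ref{AppendixA}; and Appendix \ref{AppendixA} computes $K_2(\Z[C_7])$, not $\Wh(C_7)$, and is invoked only for the finiteness of $\Wh_2(C_7)$ in condition \ref{condII} and for the $3$-adic statement in Theorem \ref{diffdifference}($ii$). You should also be wary of reaching for the general literature on inertial $h$-cobordisms of fake lens spaces: as the paper's Warning \ref{warn} explains, Kwasik's theorem asserting $I(M)=0$ for all fake lens spaces is believed to be flawed and would directly contradict the $|I(M)|=3$ computation --- so a citation-based argument for $\mathcal I(M)$ would likely produce the \emph{wrong} answer.

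On condition \ref{condII}, your route through the block structure space fibration and rational triviality of $H^{4j}(L;\Q)$ is a legitimate alternative to the paper's (which shows the stronger statement that $\Gamma(L)=\pi_0(\diff(L))$ is finite using the Hsiang--Jahren extension, the Hsiang--Sharpe sequence, Bak's vanishing of odd simple $L$-groups, and the finiteness of $K_2(\Z[C_7])$ from Appendix \ref{AppendixA}). Your version bypasses pseudoisotopy theory, which is attractive, but you should at least note where the $\Wh_2$-type input enters; it does not disappear, it is hiding in the finiteness of the relevant $K$- and $L$-theoretic terms.
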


The proof of Theorem \ref{ThmAiequivalent} will be established in Propositions \ref{propIthmAi} and \ref{finitemcg} below. 
\begin{prop}\label{propIthmAi}
The $h$-cobordism $W:L\hcob M$ of Theorem \ref{ThmAiequivalent} satisfies condition \ref{condI} of Proposition \ref{conditionprop}. In fact,
$$
\left|\frac{I(M)}{\mathcal D(M)}\right|=3.
$$
\end{prop}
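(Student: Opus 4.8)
The plan is to verify conditions \ref{condI} directly by computing the relevant Whitehead groups and inertia sets for $L$ and $M$. The key object is $\Wh(C_7)$ together with its involution $\tau \mapsto \bar\tau$ twisted by the appropriate sign. Since $d = 12k-1$ is odd, the $C_2$-action on $\Wh(L)$ in Theorem \ref{ThmB} is $t\cdot\tau = (-1)^{d-1}\bar\tau = \bar\tau$ (the ordinary algebraic involution), and the double subgroup is $\mathcal D(L) = \{\sigma + (-1)^d\bar\sigma : \sigma\} = \{\sigma - \bar\sigma : \sigma\}$, so $H_0(C_2;\Wh(L)) = \Wh(L)/\{\sigma - \bar\sigma\}$. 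First I would recall the classical fact that $\Wh(C_7)$ is a free abelian group (of rank $(7-1)/2 - 1 = 2$, by the rank formula of Bass), and pin down explicit generators; this is where Appendix \ref{AppendixA} is presumably invoked. One then needs to know how the involution acts on $\Wh(C_7)$: for $C_p$ with $p$ prime the involution is well understood (it is, up to sign conventions, complex conjugation on the relevant units), and I would use this to compute the cokernel $\Wh(C_7)/\{\sigma - \bar\sigma\} \cong H_0(C_2;\Wh(C_7))$.

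The heart of the matter, however, is the inertia set $I(M)$, which is \emph{not} a subgroup of $\Wh(M) \cong \Wh(C_7)$ in general and which genuinely depends on the diffeomorphism type of $M$ (equivalently on the torsion $\tau = u$ of $W$). Here I would appeal to the work of Jahren--Kwasik on inertial $h$-cobordisms of lens spaces: an $h$-cobordism $V : M \leadsto M'$ starting at $M$ is inertial precisely when $M'$ is diffeomorphic to $M$, and for lens spaces the diffeomorphism classification (Reidemeister--Franz--de Rham torsion, or rather the refinement used by Jahren--Kwasik) gives a concrete criterion on $\tau(V,M)$. Concretely, $I(M)$ is the set of torsions $\sigma \in \Wh(C_7)$ such that the lens space obtained from $M$ by surgery along an $h$-cobordism of torsion $\sigma$ is diffeomorphic to $M$, which translates into a condition involving the action of the group of "self-equivalences realised by homeomorphisms/diffeomorphisms" of $L$ on $\Wh(C_7)$, combined with the torsion data of $W$ itself. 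The upshot I am aiming for: $I(L)/\mathcal D(L) = 0$ because $L$ is the standard lens space and every inertial $h$-cobordism on it is a double (this uses that the relevant unit group for the standard weights is generated by doubles modulo the "diffeomorphism realisation" subgroup), whereas the twist by the unit $u = 2 + 2t - t^3 - t^4 - t^5$ has the effect of conjugating the inertia set, producing a coset $u\cdot(\text{something}) $ in $\Wh(C_7)/\mathcal D$ that is nontrivial of order exactly $3$.

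In detail the steps would be: (1) identify $\Wh(C_7)$ and its involution explicitly, citing Appendix \ref{AppendixA}; (2) compute $H_0(C_2;\Wh(C_7)) = \Wh(C_7)/\{\sigma-\bar\sigma\}$ and verify it is a finite group whose $3$-adic valuation is what we want — I expect this cokernel to be cyclic or a small abelian group with a $\Z/3$ (or $\Z/9$) factor; (3) recall the Jahren--Kwasik description of $I(M)$ for lens spaces in terms of the self-homotopy-equivalences of $L$ that preserve (simple) homotopy type and are realised by diffeomorphisms, and how this set sits inside $\Wh(C_7)$; (4) show $I(L) \subseteq \mathcal D(L)$ so that $I(L)/\mathcal D(L) = 0$; (5) compute $I(M)/\mathcal D(M)$ using the identification $h^W_* : \Wh(L) \to \Wh(M)$ and the fact that passing from $L$ to $M$ along $W$ shifts the inertia set by an amount controlled by $u$ and its image under the involution, arriving at cardinality $3$.

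The main obstacle will be step (3)--(5): pinning down the inertia set $I(M)$ precisely. Unlike $\mathcal D(M)$, which is algebraically defined, $I(M)$ requires geometric input — the diffeomorphism (not just homotopy or homeomorphism) classification of the lens spaces $h$-cobordant to $M$, which for these high-dimensional lens spaces rests on Reidemeister torsion together with the vanishing/structure of the relevant surgery obstruction and normal invariant groups. I would need to be careful that the "inertia" condition is about diffeomorphism and quote the exact lens-space computation (likely from Jahren--Kwasik \cite{JahrenKwasikInertiaHcobs}) that expresses $I(M)$ as an explicit subset of $\Wh(C_7)$ depending on $M$; everything else is then linear algebra over $\Z$ in the rank-$2$ lattice $\Wh(C_7)$, for which the choice of the specific unit $u$ with its given inverse is tailored to make the index come out to exactly $3$.
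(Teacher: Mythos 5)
Your step (2) contains an error that would derail the argument: the algebraic involution on $\Wh(\pi)$ is \emph{trivial} for $\pi$ finite abelian (Bass), so with $d$ odd and $L$ orientable one gets $\mathcal D(L) = \mathcal D(M) = 0$ and hence $H_0(C_2;\Wh(C_7)) = \Wh(C_7) \cong \Z^2$, which is \emph{infinite}. The finiteness of $I(M)/\mathcal D(M)$ does not come from the coinvariants being small; it comes from $I(M)$ itself being a finite \emph{subset} of $\Wh(M)$. The proposition thus reduces to showing $I(L)=0$ (which indeed follows from Milnor's classification of $h$-cobordant lens spaces) and $|I(M)|=3$ directly.

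The deeper gap is in steps (3)--(5). You propose to import the description of $I(M)$ for lens spaces from the literature (Jahren--Kwasik and related). But as the paper's Warning right after this proposition explains, the closest prior work (Kwasik) asserts $I(M)=0$ for all fake lens spaces, which the paper believes is fallacious and which \emph{contradicts} what you are trying to prove; you cannot quote a result that would give you the wrong answer. What the paper actually does is constructive and new: it uses the flexibility of which identification of $\pi_1(M)$ with $C_7$ one chooses. Concretely, for each $i\in(\Z/7)^\times$ one produces a self-diffeomorphism $\phi_i$ of $L$ with $\pi_1(\phi_i):t\mapsto t^i$ (this needs the Hsiang--Jahren surjection $\pi_0(\diff(L))\twoheadrightarrow\pi_0(s\mathrm{Aut}(L))$ and the specific choice of weights so that $\mathrm{im}\,\gamma=(\Z/7)^\times$ and the resulting homotopy equivalences are simple by an $R$-torsion check). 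Then
$$
V_{\phi_i} := W\circ M_{\phi_i^{-1}}\circ\overline W : M\hcob M
$$
is an inertial $h$-cobordism with torsion $\tau(V_{\phi_i},M)=h^W_*\bigl((\phi_i)_*u-u\bigr)$; evaluating on the chosen unit $u=2+2t-t^3-t^4-t^5$ gives exactly three distinct elements, and conversely $I(L)=0$ forces every inertial $h$-cobordism of $M$ to be of this form. Your sketch gestures at "conjugation by self-equivalences" but does not supply the crucial construction $V_\phi$, the torsion formula $h^W_*((\phi)_*u-u)$, or the verification that the three resulting torsions are distinct and exhaustive; without these the proof does not go through.
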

\begin{proof} The algebraic involution $\bar{\cdot}: \Wh(\pi)\to \Wh(\pi)$ is trivial when $\pi$ is a finite abelian group \cite[Prop. 4.2]{BassInvolution}. Therefore by (\ref{doublesubgroup}), the subgroups of double $h$-cobordisms $\mathcal D(L)$ and $\mathcal D(M)$ are trivial since $L$ and $M$ are odd-dimensional and orientable (so their first Stiefel--Whitney classes vanish), and $\pi_1(L)\cong\pi_1(M)\cong C_7$ is certainly finite abelian. It thus suffices to show that $I(L)=0$ and $|I(M)|=3$. 

The first assertion follows from \cite[Cor. 12.12]{MilnorWhiteheadTorsion}. We now prove that $I(M)\neq 0$, i.e., we construct non-trivial inertial $h$-cobordisms starting at $M$. For a diffeomorphism $\phi\in \diff(L)$, write $V_\phi$ for the inertial $h$-cobordism
$$
V_\phi:=W\circ M_{\phi^{-1}}\circ \overline{W}: M\hcob L\scob L\hcob M,
$$
i.e., $V_\phi=\overline{W}\cup_{\phi^{-1}} W\in I(M)$. Observe that $h^{\overline W}: M\overset{\simeq}\longrightarrow L$ is homotopy inverse to $h^W: L\overset{\simeq}\longrightarrow M$ because $\overline W\circ W=D(W)$ and $W\circ\overline W=D(\overline W)$ are $h$-cobordant rel boundary to the trivial $h$-cobordisms $L\times I$ and $M\times I$, respectively (see Figure \ref{doublehcobs}). So $h^{\overline W\circ W}=h^{\overline W}\circ h^W$ is homotopic to $h^{L\times I}=\mathrm{Id}_L$ (and similarly $h^W\circ h^{\overline W}\simeq \mathrm{Id}_M$). The $h$-cobordism $V_\phi$ then has torsion
\begin{align*}
\tau(V_\phi,M)&=\tau(\overline{W},M)+(h^{\overline W})_*^{-1}\tau(W\circ M_{\phi^{-1}}, L)\\
&=(-1)^{12k-1}h^W_*\overline u+h^W_*\phi_*u\\
&=h^{W}_*(\phi_*u-u),
\end{align*}
where we have used that $\overline u=u$ by the triviality of the algebraic involution. Therefore, if we are able to find self-diffeomorphisms $\phi$ of $L$ for which $\phi_*u\neq u$ in $\Wh(L)$, then we will have achieved our task.

\begin{figure}[h]
    \centering
    \includegraphics[scale=0.13]{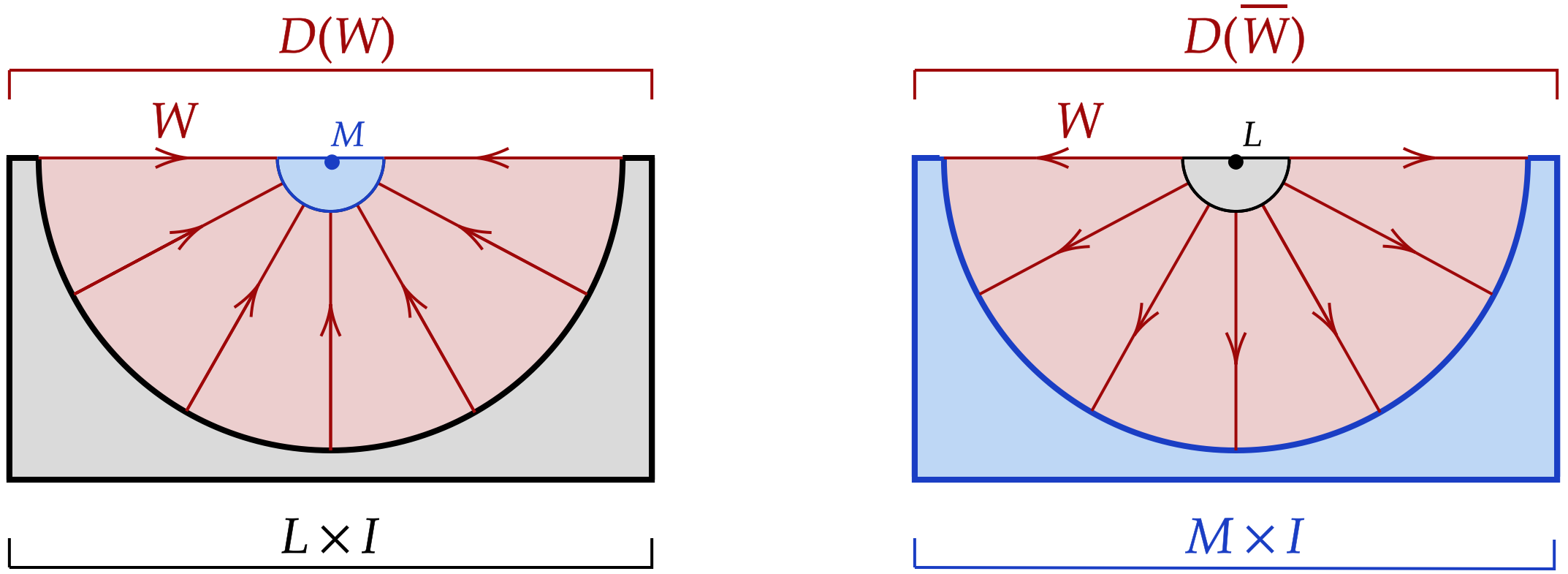}
    \caption{$h$-cobordisms rel boundary $D(W)\hcob L\times I$ and $D(\overline W)\hcob M\times I$.}
    \label{doublehcobs}
\end{figure}

\begin{cl}
There are orientation-preserving self-diffeomorphisms $\phi_i: L\overset{\cong}\longrightarrow L$ for $i\in (\Z/7)^\times$ such that $\pi_1(\phi_i): t\mapsto t^{i}$.
\end{cl}
\begin{proof}[Proof of Claim]
In fact by the main theorem of \cite{HsiangJahren} (see (\ref{pi0Lses}) below), the natural map $\pi_0(\diff(L))\to \pi_0(s\mathrm{Aut}(L))$ is surjective, so it will suffice to find simple homotopy automorphisms $f_i\in s\mathrm{Aut}(L)$ for each $i\in (\Z/7)^\times$ such that $\pi_1(\hspace{2pt} f_i): t\mapsto t^{i}$. By \cite[Thm. II.b \& Thm. V]{Olum1953}, the natural map $\gamma:\pi_0(h\mathrm{Aut}(L))\to \mathrm{Aut}(\pi_1 L)\cong (\Z/7)^\times$ is injective with  image
    $$
    \im \gamma=\{i\in \Z/7: i^{\hspace{1pt} 6k}\equiv \pm 1 \mod 7\}.
    $$
The classes $[\hspace{1pt}f]$ sent to $i\in (\Z/7)^\times$ with $i^n\equiv +1\mod 7$ (resp. $i^n\equiv -1\mod 7$) are orientation-preserving (resp. orientation-reversing). But if $i\in (\Z/7)^\times$, $i^6\equiv 1\mod 7$ and so $i^{6k}\equiv 1\mod 7$ too. Therefore for each $i\in (\Z/7)^\times$, there exists some (orientation-preserving) homotopy automorphism $f_i\in h\mathrm{Aut}(L)$ such that $\pi_1(\hspace{2pt} f_i):t\mapsto t^{i}$. By \cite[Lem. 12.5]{MilnorWhiteheadTorsion}, $f_i$ is a simple homotopy automorphism if and only if
$$
(\hspace{2pt} f_i)_*\Delta(L)=\Delta(L)\in \Q[C_7]/\sim,
$$
where $\Delta(L)$ denotes the \textit{R-torsion} of $L$ \cite[Lem. 12.4]{MilnorWhiteheadTorsion}. Here, two elements $x,y\in \Q[C_7]$ are related $x\sim y$ if and only if there exists some $g\in C_7$ such that $x=\pm g\cdot y$. Recall also \cite[p. 406]{MilnorWhiteheadTorsion} that the $R$-torsion of $L$ is
$$
\Delta(L)=\prod_{j=1}^{6k}(t^{r_j}-1)=\prod_{j=1}^6(t^{\hspace{1pt}j}-1)^k,
$$
and so
$$
(\hspace{2pt} f_i)_*\Delta(L)=(\hspace{2pt} f_i)_*\left(\prod_{j=1}^6(t^{\hspace{1pt}j}-1)^k\right)=\prod_{i=1}^6(t^{i\cdot j}-1)^k=\Delta(L), \quad i\in (\Z/7)^\times.
$$
Therefore $f_i$ is a simple homotopy equivalence for $i\in (\Z/7)^\times$, as claimed. 
\end{proof}

Now it is easily checked that $(\phi_6)_*u=u$, so $(\phi_2)_*u=(\phi_5)_*u$ and $(\phi_3)_*u=(\phi_4)_
*u$ in $\Q[C_7]/\sim$. On the other hand, the three non-trivial units
$$
u=2+2t-t^3-t^4-t^5, \quad (\phi_2)_*u=2+2t^2-t^6-t-t^3, \quad (\phi_3)_*u=2+2t^3-t^2-t^5-t
$$
represent different elements in $\mathrm{Wh}(L)$ (for instance, the difference between the powers of $t$ of the terms whose coefficient is $2$ in the three units is different mod $7$). By our previous argument, the $h$-cobordisms $V_{\phi_1}=M\times I$, $V_{\phi_2}$ and $V_{\phi_3}$ are pairwise non-diffeomorphic inertial $h$-cobordisms starting at $M$, so $|I(M)|\geq 3$. Note that $V_{\phi_i}\cong V_{\phi_{7-i}}$ for $i=1,\dots 6$. Conversely, suppose that $V: M\hcob M$ is an inertial $h$-cobordism (by possibly post-composing with a mapping cylinder, we may assume that the target of $W$ is $M$ itself). Then the inertial $h$-cobordism $\overline W\circ V\circ W:L\hcob L$ must be trivial as $I(L)=0$, and $(h^{\overline W\circ V\circ W})^{-1}_*=(h^W)^{-1}_*(h^{V})_*^{-1}h^W_*=(\phi_i)_*$ for some $i\in (\Z/7)^\times$ because $\mathrm{Aut}(\pi_1L)\cong (\Z/7)^\times$. Using $\tau(\overline W\circ V\circ W,L)=0$ we get that
$$
\tau(V,M)=(h^V)^{-1}_*h^W_*u-h^W_*u=h_*^W((\phi_i)_*u-u)=\tau(V_{\phi_i},M),
$$
i.e. $V$ is diffeomorphic to $V_{\phi_i}$ rel $M$. Hence, $|I(M)|=3$ and this finishes the proof.
\end{proof}

\begin{rem}\label{remarkinertianotsubgroup}
This is an example of how badly-behaved the set of inertial $h$-cobordisms of a manifold may be. For instance in the case at hand, it is not an $h$-cobordism invariant. It is also \textit{not} a subgroup of $\Wh(M)\cong\Wh(C_7)\cong \Z^2$ (see \cite[$\S7$]{BassWhiteheadGroups} and \cite[pp. 202--205]{SteinWhiteheadgroups}) because $I(M)$ is a finite subset with cardinality strictly greater than $1$. See \cite[Rmk. 6.2]{HausmannInertialhcobs} for more instances of this phenomenon.
\end{rem}

\begin{warn}
The main theorem of \cite{Kwasik1} states that $I(M)=0$ if $M$ is a fake lens space, that is, the orbit space of a free (possibly non-linear) action of a finite cyclic group on a sphere. This clearly contradicts Proposition \ref{propIthmAi}, but we believe that the proof of the result in \cite{Kwasik1} is fallacious: following a trail of references \cite[Claim 2]{Kwasik1}, \cite[Prop. 3.2]{Kwasik2} and \cite[p. 353]{Kwasik3}, it is eventually stated that if $\tau\in \Wh(M)$ is the torsion of some inertial $h$-cobordism of $M$, then $2\tau=0$. This supposedly follows from the proof of \cite[Prop. 12.8]{MilnorWhiteheadTorsion}, but in the statement of this result it is required that the $h$-cobordism between special manifolds be compatible with the given identifications of the fundamental groups, i.e., that $h^W_*: \Wh(M)\to \Wh(M')$ has been trivialised beforehand. This requirement does not hold in the case of \cite[p. 353]{Kwasik3}, and is exactly what we exploit in the proof of Proposition \ref{propIthmAi}.
\end{warn}

We now deal with \ref{condII}. Since the natural map $\pi_0(\diff(L))\to \pi_0(\bdiff(L))\cong \pi_1(B\bdiff(L))$ is surjective, it suffices to show

\begin{prop}\label{finitemcg}
The mapping class group $\Gamma(L)=\pi_0(\diff(L))$ of $L$ is finite. In particular, $W:L\hcob M$ satisfies condition \ref{condII} of Proposition \ref{conditionprop}.
\end{prop}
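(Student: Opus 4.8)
The plan is to prove finiteness of $\Gamma(L)$ in two stages: first that the block mapping class group $\widetilde{\Gamma}(L)=\pi_0(\bdiff(L))$ is finite, and then that the kernel of the surjection $\Gamma(L)\twoheadrightarrow\widetilde{\Gamma}(L)$ (the classes of diffeomorphisms pseudoisotopic to the identity) is finite.

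For the first stage I would exploit the fibration $s\mathrm{Aut}(L)\to \widetilde{\mathcal S}^s(L)\to\widetilde{\mathcal M}^s$ of Section \ref{Rothsection}. Since $\pi_1(\widetilde{\mathcal M}^s,L)=\pi_1(B\bdiff(L))=\widetilde{\Gamma}(L)$, its long exact sequence exhibits $\widetilde{\Gamma}(L)$ as an extension of a subgroup of $\pi_0(s\mathrm{Aut}(L))$ by a quotient of $\pi_1(\widetilde{\mathcal S}^s(L),\mathrm{id})$. The group $\pi_0(s\mathrm{Aut}(L))$ is finite: by \cite[$\S$12.1]{MilnorWhiteheadTorsion} it injects into $\mathrm{Aut}(\pi_1 L)\cong(\Z/7)^\times$, exactly as in the proof of Proposition \ref{propIthmAi}. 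So it suffices to see that $\pi_1(\widetilde{\mathcal S}^s(L),\mathrm{id})$ is finite, and here I would run the block surgery exact sequence \cite{QuinnSSP} for the smooth $(12k-1)$-manifold $L$:
$$
L^s_{12k+1}(\Z C_7)\longrightarrow \pi_1(\widetilde{\mathcal S}^s(L))\longrightarrow [\Sigma L_+, G/O]\overset{\sigma}{\longrightarrow} L^s_{12k}(\Z C_7).
$$
Because $7$ is odd, the odd-dimensional surgery groups $L^h_{2j+1}(\Z C_7)$ vanish by Bak's theorem, and since the algebraic involution on $\Wh(C_7)$ is trivial \cite[Prop. 4.2]{BassInvolution} and $L$ is orientable, the relevant Tate cohomology $\widehat{H}^{\mathrm{odd}}(C_2;\Wh(C_7))$ vanishes, so the Rothenberg sequence gives $L^s_{12k+1}(\Z C_7)=L^s_1(\Z C_7)=0$. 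Hence $\pi_1(\widetilde{\mathcal S}^s(L))\cong\ker(\sigma)$.

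It thus remains to check that $\sigma$ is injective on the free part of the finitely generated group $[\Sigma L_+,G/O]=[\Sigma L,G/O]$. The reduced integral homology of $\Sigma L$ is $\Z$ in degrees $1$ and $12k$ and is $7$-torsion in the even degrees in between, while $\pi_1(G/O)=0$ and $\pi_*(G/O)$ has a (rank one) free summand only in degrees divisible by $4$; the skeletal/Atiyah--Hirzebruch analysis then shows that $[\Sigma L,G/O]$ has free rank exactly one, generated modulo torsion by $c^*(w)$, where $c\colon\Sigma L\to S^{12k}$ collapses onto the top cell and $w\in\pi_{12k}(G/O)$ generates the free part. A normal map over $L\times D^1$ (rel $\partial$) classified by $c^*(w)$ may be taken to be an equivalence away from an interior disc, so by additivity of surgery obstructions $\sigma(c^*(w))$ is the image, under the map $L_0(\Z)=L_{12k}(\Z[\pi_1 D^{12k}])\to L^s_{12k}(\Z C_7)$ induced by $\{1\}\hookrightarrow C_7$, of the ordinary surgery obstruction in $L_0(\Z)=\Z$ of the corresponding normal map on $D^{12k}$; this obstruction is a nonzero multiple of a generator because $\pi_{12k}(G/O)\to\pi_{12k}(G/\mathrm{Top})=L_0(\Z)$ is rationally an isomorphism, and $L_0(\Z)\to L^s_{12k}(\Z C_7)$ is rationally injective (it is split by the augmentation $C_7\to\{1\}$). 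Therefore $\sigma(c^*(w))$ has infinite order, $\ker(\sigma)$ is finite, and $\widetilde{\Gamma}(L)$ is finite.

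For the second stage, the kernel of $\Gamma(L)\to\widetilde{\Gamma}(L)$ is a quotient of $\pi_0$ of the pseudoisotopy space of $L$. Since $\pi_1 L\cong C_7$ and $\pi_2 L=\pi_2(S^{12k-1})=0$, the Hatcher--Wagoner computation identifies this group with a subquotient of $\mathrm{Wh}_2(\Z C_7)$, which is finite (cf. Appendix \ref{AppendixA}); alternatively, since $\dim L=12k-1\geq 7$, Igusa's concordance stability theorem together with Waldhausen's identification of the stable pseudoisotopy space with $\Omega^{\infty+2}\mathrm{Wh}^{\diff}(L)$ reduces this to the finiteness of $\pi_2\mathrm{Wh}^{\diff}(L)$, which holds because $\mathrm{Wh}^{\diff}(L)\otimes\Q$ is supported in degrees $\geq 5$. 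Combining the two stages gives that $\Gamma(L)$ is finite, so that $W\colon L\hcob M$ satisfies condition \ref{condII}. I expect the main obstacle to be the surgery-theoretic step of the third paragraph---namely, showing that the one-parameter family of normal invariants coming from the top cell of $\Sigma L$ is genuinely detected by the surgery obstruction in $L^s_{12k}(\Z C_7)$ rather than surviving into $\pi_1(\widetilde{\mathcal S}^s(L))$; the remaining inputs are either recorded earlier in the paper or are standard facts about the $K$- and $L$-theory of $\Z C_7$ (collected in Appendix \ref{AppendixA}).
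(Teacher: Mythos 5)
Your proposal takes a genuinely different route from the paper's.  The paper invokes the Hsiang--Jahren extension $0\to Q\oplus H\to\pi_0(\diff(L))\to\pi_0(s\mathrm{Aut}(L))\to 0$ together with Hsiang--Sharpe's description of $Q$, and the only remaining work is to feed in the finiteness of $\Wh_2(C_7)$, $\Wh_1^+(C_7;\mathbb{F}_2)$, $L^s_{\mathrm{odd}}(\Z C_7)$ (via Bak), and the finiteness of the homotopy groups of $\mathrm{Top}/O$ (which handles $H$ as the image of $[\Sigma L_+,\mathrm{Top}/O]_*$ in $[\Sigma L_+,G/O]_*$).  You instead pass through $\widetilde\Gamma(L)$ via the fibration $s\mathrm{Aut}(L)\to\widetilde{\mathcal S}^s(L)\to\widetilde{\mathcal M}^s$ and the block surgery exact sequence, which forces you to prove directly that the surgery obstruction map $\sigma\colon[\Sigma L_+,G/O]_*\to L^s_{12k}(\Z C_7)$ detects the rank-one free part.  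That detection (support the normal invariant on an interior disc, push through $L_{12k}(\Z)\to L^s_{12k}(\Z C_7)$ split by the augmentation) is correct and is exactly what Hsiang--Jahren's theorem packages for you; so your route is more self-contained but buys nothing new, while the paper's is shorter at the price of two black boxes.  You then return to $\Gamma(L)$ through pseudoisotopy, which parallels what the paper does in Section~\ref{section5} rather than in the proof of Proposition~\ref{finitemcg} itself.

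Three points need correcting.  (1) Your Rothenberg derivation of $L^s_1(\Z C_7)=0$ is wrong: the sequence reads $\cdots\to L^h_{n+1}\to\widehat H^{n+1}(C_2;\Wh)\to L^s_n\to L^h_n\to\cdots$, so given $L^h_1=0$ the group $L^s_1$ is a quotient of $\widehat H^2(C_2;\Wh(C_7))\cong\widehat H^0(C_2;\Wh(C_7))\cong(\Z/2)^2$, which is \emph{not} zero; vanishing of $\widehat H^{\mathrm{odd}}$ is not the relevant input.  Instead just cite Bak as the paper does \cite[Thm.~1]{BaksimpleLtheory}, whose statement applies directly to the simple $L$-groups.  (2) The "alternative" justification in your second stage, that $\Whsp(L)\otimes\Q$ is supported in degrees $\geq 5$, is false: $\pi_1^s(\Whsp(L))=\Wh(L)\cong\Z^2$ is rationally nonzero.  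Drop this and keep the Hatcher--Wagoner route.  (3) In that route, $\pi_0$ of the stable concordance space of $L$ is $\Wh_2(C_7)\oplus\Wh_1^+(C_7;\mathbb{F}_2)$, not merely a subquotient of $\Wh_2(\Z C_7)$; you still get finiteness, since $\mathbb{F}_2[C_7]$ is finite, but the second summand must be mentioned.
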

\begin{proof}
According to the main result of \cite{HsiangJahren}, the mapping class group of $L$ fits into an extension of groups
\begin{equation}\label{pi0Lses}
\begin{tikzcd}
0\rar &Q\oplus H\rar &\pi_0(\diff(L))\rar & \pi_0(s\mathrm{Aut}(L))\rar & 0.
\end{tikzcd}
\end{equation}
The group $H$ is the image of $[\Sigma L_+, \mathrm{Top}/O]_*$ in $[\Sigma L_+, G/O]_*$. By \cite[Thm. 1.1]{HsiangSharpe}, the group $Q$ also appears in an exact sequence
    \begin{equation}\label{hsiangsharpe}
    L_{2n+2}^s(\Z[C_7])\longrightarrow H_0(C_2;\pi_0(C(L)))\longrightarrow Q\longrightarrow L^s_{2n+1}(\Z[C_7]),
    \end{equation}
    where $\pi_0(C(L))$ is the group of (isotopy classes) of pseudoisotopies of $L$. By a computation of Hatcher--Wagoner \cite{HatcherWag} and corrections of Igusa \cite{IgusaCorrection}, it fits in yet another exact sequence
    \begin{equation}\label{HatcherWagonerExactSeq}
        \begin{tikzcd}
            \Wh_1^+(C_7;\mathbb{F}_2)\rar& \pi_0(C(L))\rar & \Wh_2(C_7) \rar &0
        \end{tikzcd}
    \end{equation}
    as long as $\dim L\geq 7$; this is our case. In (\ref{hsiangsharpe}) and (\ref{HatcherWagonerExactSeq}), for an abelian group with involution $\pi$,
    \begin{itemize}
    \item $L^s_*(\Z\pi)$ are the simple quadratic $L$-groups of the group ring $\Z\pi$, 
    \item $\Wh_2(\pi)$ is the abelian group (with involution) defined as the cokernel of the map arising from algebraic $K$-theory (see (\ref{WaldhausenWhFibseq}))
  \begin{equation}\label{Wh2defn}
    \pi_2^s((B\pi)_+)\longrightarrow K_2(\Z\pi)\longrightarrow \Wh_2(\pi)\longrightarrow 0,
    \end{equation}
    
    \item $\Wh_1^+(\pi;\mathbb{F}_2):=H_0(C_2; \mathbb{F}_2[\pi])$ with a certain involution.
\end{itemize}

We now show that each of the groups in the extension (\ref{pi0Lses}) are finite. As mentioned in the proof of Proposition \ref{propIthmAi}, $\pi_0(s\mathrm{Aut}(L))\subset \pi_0(h\mathrm{Aut}(L))\subset (\Z/7)^\times$, so it is definitely finite. 

By Proposition \ref{Ktheoryappendixprop}, $K_2(\Z[C_7])$ is finite, and hence so is $\Wh_2(C_7)$ by (\ref{Wh2defn}). Since $\mathbb{F}_2[C_7]$ is finite, $\Wh_1^+(C_7;\mathbb{F}_2)$ is so too. Moreover, the (simple) $L$-theory of $\Z\pi$ for finite groups $\pi$ of odd order is zero in odd degrees \cite[Thm. 1]{BaksimpleLtheory}. Therefore $L^s_{2n+1}(\Z[C_7])=0$, and thus $Q$ is finite by (\ref{hsiangsharpe}).

The finiteness of $H$ is a consequence of the following two observations: firstly that the infinite loop space $\mathrm{Top}/O$ has finite homotopy groups in every degree (see \cite[Thm. 5.5]{KirbySieb}). Secondly that if $X$ is a (pointed) finite CW-complex and $Y$ a (pointed) space with finite homotopy groups in every degree, then the set $[X,Y]_*$ of (pointed) maps from $X$ to $Y$ up to homotopy is finite---indeed, this follows easily by induction on the skeleta $\{X_k\}_{k\geq 0}$ of $X$ by considering the cofibre sequences
$$
\begin{tikzcd}
X_{k-1}\rar[hook] &X_k\ar[r, two heads] &\bigvee_{i\in I_k} S^k,
\end{tikzcd}
$$
where $I_k$ is a finite set (because $X$ is a finite CW-complex). Hence $[\Sigma L_+, \mathrm{Top}/O]_*$ is finite, and thus so is $H$. This finishes the proof.
\end{proof}

\begin{rem}\label{smoothcategoryremark}
For $CAT=\mathrm{Top}$ or $PL$, the group $H$ of (\ref{pi0Lses}) should be replaced by the image of $[\Sigma L_+, \mathrm{Top}/CAT]_*$ in $[\Sigma L_+, G/CAT]$, which readily vanishes for $CAT=\mathrm{Top}$ and is seen to be finite too for $CAT=PL$ (see e.g. \cite{Brumfiel1968}), so the same argument in the proof of Proposition \ref{finitemcg} goes through. 
\end{rem}

\section{\texorpdfstring{Proof of Theorem \ref{diffdifference}($ii$)}{Proof of Theorem A(ii)}}\label{section5}
In this section we finish the proof of Theorem \ref{diffdifference} using the candidate $W:L\hcob M$ of 
Theorem \ref{ThmAiequivalent}. We would hope that $\diff(L)$ and $\bdiff(L)$ differ as much as $\diff(M)$ and $\bdiff(M)$ do, so that the difference of block mapping class groups established in Theorem \ref{ThmAiequivalent} carries over to the $\diff$-level. This is the case in some range and \textit{up to extensions} \cite[Thm. A]{WWI}.

\begin{thm}[Weiss--Williams]\label{WWbdiffmoddiff} Let $M^d$ be compact a smooth $d$-manifold. There exists a map
$$
\Phi^s: \bdiff/\diff(M)\longrightarrow \Omega^{\infty}\left(\Hsp_\diff^s(M)_{hC_2}\right)
$$
which is $(\phi_M+1)$-connected, where $\phi_M$ denotes the concordance stable range of $M$ (which by Igusa's theorem \cite{IgusaConcordanceStability} is at least $\min(\frac{d-4}{3},\frac{d-7}{2})$). 
\end{thm}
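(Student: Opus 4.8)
This is Theorem~A of Weiss--Williams \cite{WWI}, and the plan would be to follow their argument, which runs through concordance (pseudoisotopy) theory rather than surgery. First I would replace $\bdiff/\diff(M)$ by a concordance-theoretic model. Just as in the proof of Proposition~\ref{connectedcompsofMs}, the homotopy fibre $\bdiff/\diff(M)$ of $B\diff(M)\to B\bdiff(M)$ is the geometric realisation of a semi-simplicial space whose $p$-simplices are block diffeomorphisms of $M\times\Delta^p$ that have been trivialised on $M\times\partial\Delta^p$ by honest diffeomorphisms along each face. Comparing with honest diffeomorphisms, and using that the block concordance space $\widetilde C(M)$ is contractible (cf.\ \cite[Lem.~2.1]{BurgLashRoth}) whereas the honest concordance space $C(M)=\diff_\partial(M\times I)$ need not be, one sees that this semi-simplicial space is, degreewise, a bar-type construction built from $C(M)$ and its stabilisations $C(M\times I^{k})$. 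The single extra simplicial coordinate is what will be responsible for the desuspension $\Sigma^{-1}$ in the target.

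Next I would feed in the parametrised $h$-cobordism theorem of Waldhausen--Jahren--Rognes: the stable $h$-cobordism space $\mathcal H^{s}(M):=\hocolim_k \mathcal H(M\times I^{k})$ is naturally equivalent to $\Omega^\infty\Whsps(M)$, and by Igusa's concordance-stability theorem \cite{IgusaConcordanceStability} the stabilisation map $\mathcal H(M)\to\mathcal H^{s}(M)$ is $\phi_M$-connected, with $\phi_M\geq\min(\tfrac{d-4}{3},\tfrac{d-7}{2})$. Since $\mathcal H(M)$ deloops $C(M)$, this re-expresses the concordance data of the previous step in terms of the smooth Whitehead spectrum $\Whsps(M)$ through the range $\phi_M$.

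The remaining --- and main --- point is to identify the $C_2$-structure and assemble. The semi-simplicial model of the first step carries a $C_2$-symmetry induced by the reflection of $\Delta^1\cong I$, i.e.\ by turning an $h$-cobordism upside down; under the identification of the second step this symmetry corresponds to the duality involution on $\Whsps(M)$ coming from Poincaré duality on $M$ (the same involution, up to the sign $(-1)^{d-1}$, that intervenes in Theorem~\ref{ThmB}). Running the bar/homotopy-colimit construction of the first step against the equivalences of the second identifies $\bdiff/\diff(M)$, through a range, with the homotopy $C_2$-orbits of this desuspended spectrum, i.e.\ with $\Omega^{\infty}\bigl(\Sigma^{-1}\Whsps(M)_{hC_2}\bigr)$; the resulting comparison map is $\Phi^{s}$. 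The claimed connectivity $(\phi_M+1)$ then follows from a connectivity bookkeeping argument: $\Phi^{s}$ is obtained by applying a geometric realisation to the $\phi_M$-connected map of the second step, and the extra simplicial direction together with the desuspension upgrades the connectivity by one.

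I expect the hard part to be exactly this last step: setting up the simplicial/bar model of $\bdiff/\diff(M)$ so that the $C_2$-action is manifest, and so that its realisation genuinely computes the homotopy $C_2$-orbits of $\Sigma^{-1}\Whsps(M)$ rather than a twisted or only partially-stabilised variant, and then controlling the connectivity through the realisation. This is the technical core of \cite{WWI} (handled there via their "converging towers"), and in a write-up I would cite that work rather than reproduce it.
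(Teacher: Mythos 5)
The paper does not prove Theorem~\ref{WWbdiffmoddiff}; it is stated as a citation (Theorem~A of \cite{WWI}), with Remark~\ref{remafterWWIThm} clarifying the precise meaning of the desuspension and the $C_2$-action, and Appendix~\ref{appendixB} unpacking how the statement is used. Your proposal does the same thing at bottom --- you give a sketch of the Weiss--Williams argument and conclude, correctly, that a write-up would simply cite \cite{WWI} --- so methodologically you and the paper agree.

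Your sketch itself is broadly faithful to the Weiss--Williams proof (concordance-theoretic model of $\bdiff/\diff$, stable parametrised $h$-cobordism theorem, duality involution, concordance stability for the connectivity), but a few conventions in it are off and are worth flagging. The stable $h$-cobordism space is $\Omega^{\infty+1}\Whsp(M)$, not $\Omega^{\infty}\Whsps(M)$: the paper (proof of Proposition~\ref{finalprop}) records the once-looped version $\Omega^{\infty+2}\Whsp(L)\simeq\mathcal{C}^{\diff}(L)$, and $\Whsps(M)$ is the cover of $\Whsp(M)$ with $\pi_1^s$ killed, so the two spaces differ in low degrees precisely by the Whitehead group. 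More importantly, the $C_2$-equivariant desuspension ``$\Sigma^{-1}$'' is not the naive suspension but smashing with the representation sphere $\mathbb{S}^{-\sigma}\wedge\mathbb{S}^{d(\sigma-1)}$ (Remark~\ref{remafterWWIThm}); your description ``reflection of $\Delta^1$ $\leadsto$ duality involution up to $(-1)^{d-1}$'' gets the resulting sign on $\pi_0$ right but glosses over this twist, which is exactly the part of the statement that needs to be nailed down for the later use in the paper (matching the $C_2$-action of Theorem~\ref{ThmB}). Finally, your bar-construction description of $\bdiff/\diff(M)$ is a reasonable heuristic, but \cite{WWI} actually works with a filtration of bounded diffeomorphisms $\diff^{\,b}(M\times\R^\infty)$ and their ``converging towers'', which is what makes the identification with the homotopy $C_2$-orbits (rather than some partially-stabilised variant) go through; this is precisely the technical core you correctly identify as the hard step. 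None of these points undermines the citation, but they are the details one would need to get right if reproducing the argument instead of citing it.
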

\begin{rem}\label{remafterWWIThm}
The $C_2$-spectrum $\Hsp^s_{\diff}(M)$, known as the (smooth) \textit{$s$-cobordism spectrum of $M$}, is the $1$-connective cover of the (non-connective smooth) \textit{$h$-cobordism spectrum} $\Hsp_{\diff}(M)$. This latter spectrum is roughly built out of deloopings of spaces of $h$-cobordisms (cf. \cite[Lem. 1.12]{WWI}) and its infinite loop space $\mathcal{H}_\diff(M)$, the \textit{space of stable $h$-cobordisms}, coincides with that of $\Sigma^{-1}\Whsp(M)$ by the stable parametrised $h$-cobordism theorem of Waldhausen--Jahren--Rognes \cite{WaldhausenParametrisedhcob}. Here $\Whsp(M)$ stands for the \textit{smooth Whitehead spectrum} of $M$ (see Section \ref{whspappendixsection}). Moreover, the negative homotopy groups of these two spectra abstractly coincide (cf. \cite[Cor. 5.6]{WWI}), which lead Weiss and Williams to rename $\Hsp_\diff(M)$ by $\Sigma^{-1}\Whsp(M)$ (though conjecturally true, this was not fully justified).

The only property we will use about $\Hsp(-)$ is that its homotopy groups (ignoring the involution) are invariants of the homotopy type of $(-)$, as those of $\Sigma^{-1}\Whsp(-)$ are. In particular, if $W: L\hcob M$ is an $h$-cobordism, there is an isomorphism of groups
\begin{equation}\label{hcobinvarianceinvolution}
    \pi_*^s(\Hsp_{\diff}(L))\cong \pi_*^s(\Hsp_{\diff}(M)).
\end{equation}
We will not need to analyse the involutions in $\Hsp(L)$ and $\Hsp(M)$, but one can show that these two $C_2$-spectra are equivalent (in fact, the homotopy type of $\Hsp(-)$ is nearly an invariant of the tangential homotopy type of $(-)$; see \cite[\S 5]{MElongknots} for more details on the involutions in $\Hsp_\diff(M)$ and $\Whsp(M)$). It is also not difficult to see that the involution on $\pi_0(\Hsp_\diff(M))\cong \Wh(M)$ corresponds to the rule $\tau\mapsto (-1)^{d-1}\overline{\tau}$ (cf. \cite[Cor. 5.8]{MElongknots}), which fits well with Theorem \ref{ThmB}. We expand on the relation between Theorems \ref{ThmB} and \ref{WWbdiffmoddiff} in Section \ref{whspappendixsection}. 
\end{rem}

Now since $d=12k-1\geq 11$ (so $\phi_M+1\geq 2$), it follows from Theorem \ref{WWbdiffmoddiff} that $\pi_1(\bdiff/\diff(L))\cong\pi_1^s(\Hsp^s_\diff(L)_{hC_2})$. As $\Hsp_\diff^s(L)$ is $1$-connective, its homotopy fixed point spectral sequence (cf. \cite{Bousfield1972}) then yields isomorphisms
\begin{equation}\label{pi1pseudoisotopy}
\begin{tikzcd}[row sep = 4pt]
\pi_1(\bdiff/\diff(L))\cong H_0(C_2; \pi_1^s(\Hsp_\diff(L))),\\
\pi_1(\bdiff/\diff(M))\cong H_0(C_2; \pi_1^s(\Hsp_\diff(M))),
\end{tikzcd}
\end{equation}
for potentially different $C_2$-actions on $\pi_1^s(\Hsp_\diff(L))\cong\pi_1^s(\Hsp_\diff(M))$. Consider the extensions
\begin{equation}\label{bdiffmoddiffdiagramLM}
\begin{tikzcd}[row sep = 5pt]
\pi_1(\bdiff/\diff(L))\rar["\partial"] & \pi_0(\diff(L))\rar[two heads] &\pi_0(\bdiff(L))\rar & 0,\\
\pi_1(\bdiff/\diff(M))\rar["\partial"] & \pi_0(\diff(M))\rar[two heads] &\pi_0(\bdiff(M))\rar & 0.
\end{tikzcd}
\end{equation}
We know from Theorem \ref{ThmAiequivalent} that $|\pi_0(\bdiff(L)|=3\cdot |\pi_0(\bdiff(M))|$, so in order to prove Theorem \ref{diffdifference}($ii$) it suffices to establish the next result.
\begin{prop}\label{finalprop}
The groups $\pi_1(\bdiff/\diff(L))$ and $\pi_1(\bdiff/\diff(M))$ are finite and their cardinality is not divisible by $3$. Together with Theorem \ref{ThmAiequivalent}, it follows that the $3$-adic valuations of $|\Gamma(L)|$ and $|\Gamma(M)|$ differ. This proves Theorem \ref{diffdifference}$(ii)$.
\end{prop}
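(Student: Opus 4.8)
The plan is to first reduce to a homotopy-theoretic computation via Weiss--Williams. Since $d=12k-1\geq 11$, Igusa's bound gives a concordance stable range $\phi_L\geq \min(\tfrac{d-4}{3},\tfrac{d-7}{2})\geq 2$, so the map $\Phi^s\colon \bdiff/\diff(L)\to \Omega^\infty(\Sigma^{-1}\Whsps(L)_{hC_2})$ of Theorem~\ref{WWbdiffmoddiff} is at least $3$-connected and in particular induces an isomorphism $\pi_1(\bdiff/\diff(L))\cong \pi_1(\Sigma^{-1}\Whsps(L)_{hC_2})$. Thus it suffices to show that this last group is finite of order prime to $3$.

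The next step is to run the homotopy-orbit spectral sequence $E^2_{p,q}=H_p(C_2;\pi_q(\Sigma^{-1}\Whsps(L)))\Rightarrow \pi_{p+q}(\Sigma^{-1}\Whsps(L)_{hC_2})$. Since $\Whsps(L)$ is $1$-connective, the underlying spectrum $\Sigma^{-1}\Whsps(L)$ is connective with $\pi_0=\pi_1(\Whsps(L))=\Wh(\pi_1 L)=\Wh(C_7)\cong\Z^2$ and $\pi_1=\pi_2(\Whsps(L))=\Wh_2(C_7)$, and $\pi_q=0$ for $q<0$. Hence in total degree $1$ only $E^2_{1,0}=H_1(C_2;\Wh(C_7))$ and $E^2_{0,1}=H_0(C_2;\Wh_2(C_7))$ are nonzero, with $E^\infty_{1,0}=E^2_{1,0}$ and $E^\infty_{0,1}$ a quotient of $E^2_{0,1}$. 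The $C_2$-action on $\pi_0=\Wh(C_7)$ is $\tau\mapsto(-1)^{d-1}\overline\tau$ by Remark~\ref{remafterWWIThm}, which is trivial here because $d-1$ is even and the algebraic involution on $\Wh(C_7)$ is trivial; so $E^2_{1,0}=H_1(C_2;\Z^2)\cong(\Z/2)^2$, of order $4$. Moreover, by the defining sequence~(\ref{Wh2defn}) together with Proposition~\ref{Ktheoryappendixprop} of Appendix~\ref{AppendixA}, the group $K_2(\Z[C_7])$, and hence $\Wh_2(C_7)$, is finite of order prime to $3$, so $H_0(C_2;\Wh_2(C_7))$ and its quotient $E^\infty_{0,1}$ are as well. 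Therefore $\pi_1(\bdiff/\diff(L))$ is finite of order prime to $3$, being an extension of a subgroup of $(\Z/2)^2$ by a quotient of $H_0(C_2;\Wh_2(C_7))$.

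Finally, I would deduce the statement about $\Gamma(-)$. Diagram~(\ref{bdiffmoddiffdiagramLM}) provides exact sequences $0\to \im\partial_L\to \pi_0(\diff(L))\to \pi_0(\bdiff(L))\to 0$ and likewise for $M$, and its left-hand vertical isomorphism (homotopy invariance of $\Whsps(-)$, using $L\simeq M$) shows $\pi_1(\bdiff/\diff(M))\cong \pi_1(\bdiff/\diff(L))$, so it too has order prime to $3$. Consequently $|\im\partial_L|$ and $|\im\partial_M|$ are prime to $3$, whence the $3$-adic valuation of $|\Gamma(L)|$ equals that of $|\pi_0(\bdiff(L))|$ and similarly for $M$; by Theorem~\ref{ThmAiequivalent} one has $|\pi_0(\bdiff(L))|=3\cdot|\pi_0(\bdiff(M))|$, so the two valuations differ (by exactly $1$), proving Theorem~\ref{diffdifference}($ii$). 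The only genuinely non-formal ingredient is the $K$-theoretic input from Appendix~\ref{AppendixA} that $K_2(\Z[C_7])$ has order prime to $3$; within this section the care required is purely bookkeeping — tracking the $C_2$-action in the spectral sequence and checking that connectivity collapses it to the two cells above.
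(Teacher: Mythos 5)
Your overall strategy is the same as the paper's (Weiss--Williams plus a $C_2$-homotopy-orbit spectral sequence plus the $K$-theoretic input from Appendix~\ref{AppendixA}, then feeding the result into the exact sequences of diagram~(\ref{bdiffmoddiffdiagramLM})), and your final conclusion is correct, but two of the intermediate identifications are wrong.

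First, you assert $\pi_1^s(\Whsps(L))=\Wh(C_7)$, hence a contributing term $E^2_{1,0}=H_1(C_2;\Wh(C_7))\cong(\Z/2)^2$. In fact $\Whsps(M)$ is defined so that $\pi_i^s(\Whsps(M))=0$ for $i\le 1$: the fibration~(\ref{Whhfibration}) realises $\Sigma^{-1}\Whsps(M)$ as the fibre of $\Sigma^{-1}\Whsph(M)\to H\Wh(M)$, which precisely kills $\pi_1$. This is what the paper is implicitly using when it reads off $\pi_1(\bdiff/\diff(L))\cong H_0(C_2;\pi_2^s(\Whsp(L)))$ from the Bousfield--Kan spectral sequence with no contribution from $p=1$. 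You have in effect confused $\Whsps$ with $\Whsph$. The spurious $(\Z/2)^2$ does not alter the $3$-primary conclusion, but it signals a misunderstanding of the object.

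Second, you identify $\pi_2^s(\Whsp(L))$ with $\Wh_2(C_7)$. This is an algebraic statement that holds for the topological Whitehead spectrum, but $\Whsp$ here is the \emph{smooth} Whitehead spectrum, and the correct identification in the paper goes through the stable parametrised $h$-cobordism theorem ($\Omega^{\infty+2}\Whsp(L)\simeq\mathcal C^{\diff}(L)$) and the Hatcher--Wagoner computation of $\pi_0\mathcal C^{\diff}(L)$, giving $\Wh_2(C_7)\oplus \Wh_1^+(C_7;\mathbb F_2)$. You have dropped the second summand entirely, and you have also dropped the geometric input needed to produce the identification at all. Again, $\Wh_1^+(C_7;\mathbb F_2)$ is $2$-torsion so its absence does not change the $3$-adic conclusion, but the step as written is not a correct proof of the displayed isomorphism. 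With these two corrections --- $E^2_{1,0}=0$, and $\pi_2^s(\Whsp(L))\cong \Wh_2(C_7)\oplus\Wh_1^+(C_7;\mathbb F_2)$ via Hatcher--Wagoner --- the argument closes up exactly as in the paper.
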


\begin{proof}
Given (\ref{hcobinvarianceinvolution}) and (\ref{pi1pseudoisotopy}), we need only verify the first claim for the abelian group $\pi_1^s(\Hsp_\diff(L))\cong\pi_1(\mathcal H_\diff(L))$. Since $\dim L=d\geq 11$, it follows from Igusa's lower bound on the concordance stable range that $\pi_1(\mathcal H_\diff(L))=\pi_0(C(L))$, which fits in the extension (\ref{HatcherWagonerExactSeq}).

We have already argued in the proof of Proposition \ref{finitemcg} that both of the groups $\Wh_2(C_7)$ and $\Wh_1^+(C_7;\mathbb{F}_2)$ in the extension are finite. Both summands are moreover $3$-locally trivial ($\Wh_1^+(C_7;\mathbb{F}_2)$ is $2$-tosion, and $\Wh_2(C_7)$ is a quotient of $K_2(\Z[C_7])$, which has no $3$-torsion by Proposition \ref{Ktheoryappendixprop}). Any quotient of this group (e.g. those of (\ref{pi1pseudoisotopy})) will have this same property, so the result follows. The proof of Theorem \ref{diffdifference} is now complete.
\end{proof}

\begin{rem}\label{diffremarkno2}
For $CAT=\mathrm{Top}$ or $PL$, the $h$-cobordism spectrum $\Hsp_\diff(L)$ should be replaced by its topological version $\Hsp_{\mathrm{Top}}(L)$ (this is in fact the one that appears originally in \cite{WWI}). To argue that $\pi_1^s(\Hsp_{\mathrm{Top}}(L))\cong\pi_2^s(\Whsptop(L))$ is finite and $3$-local as in the previous proof, we consider the diagram of cofibre sequences of spectra (see (\ref{WaldhausenWhFibseq}) and (\ref{WaldhausenWhFibseqtop}))
$$
\begin{tikzcd}
\mathbb{S}\wedge L_+\dar\rar["\iota"] &\mathbf{A}(L)\dar[equal]\rar[two heads] & \Whsp(L)\dar\\
\mathbf{A}(*)\wedge L_+\dar[two heads]\rar["\alpha"] &\mathbf{A}(L)\dar\rar[two heads] & \Whsptop(L)\dar[two heads]\\
\Whsp(*)\wedge L_+\rar &*\rar &\Sigma\Whsp(*)\wedge L_+.
\end{tikzcd}
$$
Then $\pi_2^s(\Whsptop(L))$ will be finite and $3$-locally trivial if $\pi_2^s(\Sigma\Whsp(*)\wedge L_+)\cong \pi_1^s(\Whsp(*)\wedge L_+)$ is. This in turn follows from the Atiyah--Hirzebruch spectral sequence, as $\Whsp(*)\simeq \Whsp(D^5)$ by the homotopy invariance of the Whitehead spectrum, and because the latter is $1$-connective by the $s$-cobordism theorem (in fact it is $2$-connective by Cerf's pseudoisotopy theorem).
\end{rem}

\begin{rem}[Another possible example] Theorem \ref{diffdifference} may also holds for the lens space $L=L^{8k-1}_5(r_1:\dots:r_{4k})$, where 
$$
r_1=\dots=r_k=1,\qquad r_{k+1}=\dots=r_{2k}=2, \quad \dots \quad r_{3k+1}=\dots=r_{4k}=4\mod 5,
$$
and the $h$-cobordism $W: L\hcob M$ with $\tau(W,L)=[1-t-t^4]\in \Wh(C_5)$. The argument for part $(i)$ is exactly analogous to that of Subsection \ref{candidatesection}, but part $(ii)$ is trickier. The inertia set $I(M)$ will have size two (instead of three), and the group $\Wh_1^+(C_5;\mathbb{F}_2)$ does have $2$-torsion. The alternative then is to show directly that the map $\partial$ in (\ref{bdiffmoddiffdiagramLM}) is injective by identifying $\pi_1(\bdiff/\diff(L))$ with the cobordism group $\pi_0(\mathcal B(L))$ of \cite[p.1]{HsiangJahren}. However, this argument does rely on the claim made in the proof of \cite[Sublemma 4.2]{HsiangJahren} that a certain map $H_0(C_2; \Wh_2(C_5))\to L^{\mathrm{St}}_{8k-1}(C_5)$ is injective when inverting the prime $2$. We do not know how to prove this, nor have we found a reference that does.
\end{rem}

\appendix
\section{\texorpdfstring{An algebraic $K$-theory computation}{An algebraic K-theory computation}}\label{AppendixA}

The aim of this section is to prove the following.

\begin{prop}\label{Ktheoryappendixprop}
For $p$ a prime, $K_2(\Z[C_p])$ is finite. Moreover when $p=7$, its $3$-torsion part vanishes:
$$
K_2(\Z[C_7])_{(3)}=0.
$$
\end{prop}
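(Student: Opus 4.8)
## Proof strategy for Proposition~\ref{Ktheoryappendixprop}

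The plan is to reduce everything to known structural results about $K_2$ of rings of integers in number fields and of finite fields, using the standard arithmetic square for the cyclotomic integers together with the conductor–square (Milnor/Rim) presentation of $\Z[C_p]$. First I would recall that for $p$ prime, $\Z[C_p]$ sits in a Milnor square
$$
\begin{tikzcd}
\Z[C_p]\rar\dar & \Z[\zeta_p]\dar\\
\Z\rar & \mathbb{F}_p,
\end{tikzcd}
$$
obtained by modding out the ideal generated by the norm element; here $\Z[\zeta_p]=\Z[t]/\Phi_p(t)$ is the ring of integers in the $p$-th cyclotomic field $\mathbb{Q}(\zeta_p)$, and both right-hand vertical maps are surjective. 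Since this is a conductor square with one of the maps surjective, there is a Mayer–Vietoris exact sequence in low-degree algebraic $K$-theory (cf.\ Milnor, \emph{Introduction to Algebraic $K$-theory}, or the refinement needed at $K_2$ due to the fact that the square need not be $K_2$-excisive — one uses the relative term, which is controlled because $\mathbb{F}_p$ and $\Z$ are regular):
$$
K_2(\Z[C_p])\longrightarrow K_2(\Z[\zeta_p])\oplus K_2(\Z)\longrightarrow K_2(\mathbb{F}_p)\longrightarrow K_1(\Z[C_p])\longrightarrow\cdots
$$
Now $K_2(\mathbb{F}_p)=0$ (Quillen: $K_2$ of a finite field vanishes), $K_2(\Z)\cong\Z/2$ is finite, and $K_2(\Z[\zeta_p])$ is finite because $\Z[\zeta_p]$ is the ring of $S$-integers (indeed the full ring of integers) in a number field, for which $K_2$ is a finite group by Garland's theorem (equivalently by the finiteness half of the Birch–Tate conjecture / Quillen–Borel). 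Feeding these into the Mayer–Vietoris sequence shows $K_2(\Z[C_p])$ is an extension of a subgroup of $K_2(\Z[\zeta_p])\oplus K_2(\Z)$ by a quotient of a finite group, hence finite. This handles the first assertion for every prime $p$.

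For the $3$-local statement when $p=7$, I would localise the entire Mayer–Vietoris sequence at the prime $3$: since $K_2(\Z)_{(3)}=0$ and $K_2(\mathbb{F}_7)=0$, the sequence collapses to an inclusion
$$
K_2(\Z[C_7])_{(3)}\hookrightarrow K_2(\Z[\zeta_7])_{(3)}.
$$
So it suffices to prove $K_2(\Z[\zeta_7])_{(3)}=0$, i.e.\ that $K_2$ of the ring of integers of $\mathbb{Q}(\zeta_7)$ has no $3$-torsion. Here I would invoke the Quillen–Lichtenbaum / Voevodsky–Rost machinery (or, more elementarily and sufficient here, the classical computations of Tate and of Dennis–Stein on $K_2$ of cyclotomic rings): for a number field $F$ with ring of integers $\mathcal{O}_F$, the $\ell$-torsion in $K_2(\mathcal{O}_F)$ is identified with $H^2_{\text{ét}}(\mathcal{O}_F[1/\ell];\Z_\ell(2))$, and its order is governed by the $\ell$-part of $\zeta_F(-1)$ together with local factors at primes above $\ell$. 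Concretely, for $F=\mathbb{Q}(\zeta_7)$ one computes (e.g.\ from the functional equation and known special values, or directly from tables such as those in Belabas–Gangl or the LMFDB) that the numerator of $\zeta_F(-1)$ is not divisible by $3$ and that $3$ is unramified in $\mathbb{Q}(\zeta_7)/\mathbb{Q}$ with no contribution to the relevant étale cohomology, whence $K_2(\Z[\zeta_7])_{(3)}=0$.

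The main obstacle is the $K_2$-excision issue: the Milnor square above is generally \emph{not} excisive for $K_2$, so one cannot naively write down the six-term Mayer–Vietoris sequence. The honest way to proceed is to use the relative $K$-theory long exact sequence of the ideal $I=\ker(\Z[C_p]\to\mathbb{F}_p)$ (equivalently the ideal generated by the norm element), compare it via the surjection $\Z[\zeta_p]\to\mathbb{F}_p$, and control the discrepancy — which lives in relative $K_1$/$K_2$ of the pair and is a torsion group built from $\mathbb{F}_p^\times$ and the unit/class-group data of $\Z[\zeta_p]$, hence finite with no $3$-torsion for the relevant primes. An alternative, and perhaps cleaner, route is to bypass excision entirely: use that $\Z[\zeta_7]$ is regular and $\Z[C_7]$ is a one-dimensional order whose non-maximality is concentrated at the prime $7$, so that $K_2(\Z[C_7])\to K_2(\Z[\zeta_7])$ is an isomorphism after inverting $7$ (the cokernel and kernel are $7$-torsion, by a theorem on $K$-theory of orders — cf.\ work of Keating or of Charney on $K$-theory of $\Z$-orders); since $3\neq 7$ this immediately gives $K_2(\Z[C_7])_{(3)}\cong K_2(\Z[\zeta_7])_{(3)}$, and one is reduced as before to the cyclotomic computation. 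Either way, the geometric content is trivial and the real work is (i) justifying the comparison with the maximal order and (ii) extracting the $3$-part of $K_2(\Z[\zeta_7])$ from the arithmetic of $\mathbb{Q}(\zeta_7)$.
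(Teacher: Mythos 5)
Your overall strategy is the same as the paper's: write down the Milnor square
$$
\begin{tikzcd}
\Z[C_p]\rar\dar & \Z[\zeta_p]\dar\\
\Z\rar & \mathbb{F}_p,
\end{tikzcd}
$$
handle the excision problem (the paper does this with the Land--Tamme theorem, which produces a connective ring spectrum $\mathcal{R}$ and a genuine long exact Mayer--Vietoris sequence in $K$-theory), and then read off finiteness and the $3$-local triviality from the resulting sequence. You correctly recognise that naive $K_2$-excision fails, and your finiteness argument can be made to work along the lines you sketch.

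However, your proof of the second assertion has a genuine gap. You claim that after localising at $3$, because $K_2(\Z)_{(3)}=0$ and $K_2(\mathbb{F}_7)=0$, the sequence ``collapses to an inclusion $K_2(\Z[C_7])_{(3)}\hookrightarrow K_2(\Z[\zeta_7])_{(3)}$.'' But the term immediately to the left of $K_2(\Z[C_7])$ in the long exact sequence is a $K_3$-term at the corner of the square --- in the Land--Tamme framework it is $K_3(\mathcal{R})$, which away from $p=7$ is identified with $K_3(\mathbb{F}_7)\cong\Z/(7^2-1)=\Z/48$. Its $3$-part is $\Z/3\neq 0$, so the two vanishing facts you quote do \emph{not} by themselves force injectivity of $K_2(\Z[C_7])_{(3)}\to K_2(\Z[\zeta_7])_{(3)}$. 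This is precisely what the paper's Lemma~\ref{K3Fplemma} addresses: it shows, using Quillen's description of $K_{4k-1}(\mathbb{F}_p)$ at odd primes $\ell$ as the image of $J$, that $K_3(\Z)_{(3)}\to K_3(\mathbb{F}_7)_{(3)}$ is an isomorphism $\Z/3\to\Z/3$, so that the map $K_3(\Z)\oplus K_3(\Z[\zeta_7])\to K_3(\mathbb{F}_7)$ is $3$-locally surjective and the boundary into $K_2(\Z[C_7])_{(3)}$ vanishes. Without an argument of this kind, your proof is incomplete.

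Your proposed alternative --- that $K_2(\Z[C_7])\to K_2(\Z[\zeta_7])$ is an isomorphism after inverting $7$ because the order differs from the maximal order only at $7$ --- is not a theorem you can invoke off the shelf, and it hides exactly the same subtlety. Algebraic $K$-theory does not commute with inverting an integer on the ring level, and the Land--Tamme correction term $K_*(\mathcal{R})\otimes\Z[1/7]\cong K_*(\mathbb{F}_7)\otimes\Z[1/7]$ is visibly nontrivial (e.g.\ $\Z/48$ in degree $3$), so the claimed isomorphism on $K_2$ after inverting $7$ must be established by a computation, not a general principle. (As an aside, the maximal order in $\Q[C_7]$ is $\Z\times\Z[\zeta_7]$, not $\Z[\zeta_7]$, though this does not matter $3$-locally since $K_2(\Z)\cong\Z/2$.) Once you have the injection $K_2(\Z[C_7])_{(3)}\hookrightarrow K_2(\Z[\zeta_7])_{(3)}$, your Quillen--Lichtenbaum/Birch--Tate argument for the vanishing of $K_2(\Z[\zeta_7])_{(3)}$ is fine, though heavier than necessary: the paper simply cites the known value $K_2(\Z[\zeta_7])\cong\Z/2$.
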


\begin{rem}
    The author would like to thank John Nicholson for making him aware of the paper \cite[Thm. 2.7]{Zhang2019} which, taken together with the computation in \cite[Thm. 1.1]{K2algnumberfields}, easily implies Proposition \ref{Ktheoryappendixprop} when $p=7$; this is the only case needed in the proof of Theorem \ref{diffdifference}. By the time we became aware of this fact, we had already come up with an alternative proof, which we believe to be a nice application of a celebrated result of Land--Tamme. For this reason, we still present our original proof below, but the pragmatic reader may wish to skip this section.
\end{rem}

The main ingredient of this computation is the main theorem of Land--Tamme \cite{MarkusLandKtheory}: Given a Milnor square of ring (spectra)
$$
\begin{tikzcd}
A\rar\dar & B\dar\\
A'\rar & B',
\end{tikzcd}
$$
i.e., a pullback square of ring spectra with $\pi_0(B)\to \pi_0(B')$ surjective, they functorially associate a connective ring spectrum $\mathcal R$ for which there is a Mayer--Vietoris sequence for algebraic $K$-theory
\begin{equation}\label{KMayVie}
\begin{tikzcd}
    \dots\rar & K_{i+1}(\mathcal R)\rar & K_i(A)\rar &K_i(A')\oplus K_i(B)\rar &K_i(\mathcal R)\rar & \dots
\end{tikzcd}
\end{equation}
for every $i\in \Z$. Moreover, there is an equivalence of spectra $\mathcal R\to A'\otimes_A B$ (but \textit{not} of $\mathbb E_1$-rings in general) and a map of $\mathbb{E}_1$-rings $\mathcal R\to B'$. For $p$ a prime, the pullback square we will consider is
\begin{equation}\label{pbsquarerings}
    \begin{tikzcd}
    \Z[C_p]\cong \Z[t]/(1-t^p)\rar\dar["t=1"] & \Z(\zeta_p)\cong \Z[t]/(1+t+\dots+t^{p-1})\dar["t=1"]\\
    \Z\rar["\mod p"] & \Z/p,
    \end{tikzcd}
\end{equation}
or rather that induced by applying the Eilenberg--MacLane functor $H(-)$ to (\ref{pbsquarerings}). A straight-forward computation of $\mathrm{Tor}^{\Z[C_p]}_i(\Z,\Z(\zeta_p))$ shows that
$$
\pi_i^s(\mathcal R)\cong\left\{\begin{array}{cc}
    \Z/p,  & i=2k\geq 0,   \\
    0, & \text{otherwise}.
\end{array}
\right.
$$
Hence, the natural map $\mathcal R\to H\Z/p$ is an isomorphism on $\pi_0$ and a $\Z[1/p]$-equivalence of connective $\mathbb E_1$-rings. Therefore by \cite[Lem. 2.4]{MarkusLandKtheory}, it induces an isomorphism of localised $K$-theory $K_*(\mathcal R)\otimes \Z[1/p]\cong K_*(\Z/p)\otimes \Z[1/p]$. A portion of the exact sequence (\ref{KMayVie}) localised away from $p$ thus reads
\begin{equation}\label{MVZp}
\begin{tikzcd}[column sep = 12pt]
\Big(K_3(\Z)\oplus K_3(\Z(\zeta_p))\rar & K_3(\Z/p)\rar & K_2(\Z[C_p])\rar & K_2(\Z)\oplus K_2(\Z(\zeta_p))\Big)\otimes \Z\Big[\frac{1}{p}\Big].
\end{tikzcd}
\end{equation}
We first analyse the (3-adic part of the) map $K_3(\Z)\to K_3(\Z/p)$ for $p\neq 3$.

\begin{lem}\label{K3Fplemma}
The map $K_3(\Z)_{(3)}\to K_3(\Z/p)_{(3)}$ is injective for $p\neq 3$.
\end{lem}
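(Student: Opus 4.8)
The plan is to translate the statement into étale cohomology via the Quillen--Lichtenbaum conjecture and reduce it to an explicit restriction map. First recall the two classical inputs: by Quillen's computation $K_3(\mathbb{F}_p)\cong\Z/(p^2-1)$, and since $p\neq 3$ we have $3\mid p^2-1$, so $K_3(\mathbb{F}_p)_{(3)}\cong\Z/3^{v}$ with $v:=v_3(p^2-1)\geq 1$; and $K_3(\Z)\cong\Z/48$ by Lee--Szczarba, so $K_3(\Z)_{(3)}\cong\Z/3$. As the source is cyclic of order $3$, it suffices to show the map is nonzero. Inverting $3$ first (the map $K_3(\Z)_{(3)}\to K_3(\Z[1/3])_{(3)}$ is an isomorphism by the localization sequence, since $K_2(\mathbb{F}_3)$ and $K_3(\mathbb{F}_3)\cong\Z/8$ have no $3$-torsion), the Quillen--Lichtenbaum conjecture --- a theorem of Voevodsky, Rost and others, and due to Quillen for finite fields --- gives natural identifications $K_3(\Z[1/3])^{\wedge}_3\cong H^1_{\text{\'et}}(\Z[1/3];\Z_3(2))$ and $K_3(\mathbb{F}_p)^{\wedge}_3\cong H^1_{\text{\'et}}(\mathbb{F}_p;\Z_3(2))\cong\Z_3/(p^2-1)$, under which the reduction map becomes restriction along the closed point $\operatorname{Spec}\mathbb{F}_p\hookrightarrow\operatorname{Spec}\Z[1/3]$ (which lands in $\operatorname{Spec}\Z[1/3]$ precisely because $p\neq 3$).

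The next step is to pin down a generator of $H^1_{\text{\'et}}(\Z[1/3];\Z_3(2))$. Since $[\Q(\zeta_3):\Q]=2$ is prime to $3$, the Hochschild--Serre spectral sequence collapses to $H^1_{\text{\'et}}(\Z[1/3];\Z_3(2))\cong H^1_{\text{\'et}}(\Z[\zeta_3,1/3];\Z_3(2))^{\Gamma}$, where $\Gamma=\mathrm{Gal}(\Q(\zeta_3)/\Q)$. The ring $\Z[\zeta_3,1/3]$ has trivial Picard group and unit group $\mu_6\times\langle 1-\zeta_3\rangle$, so Kummer theory identifies $H^1_{\text{\'et}}(\Z[\zeta_3,1/3];\Z_3(1))$ with $\big(\mu_6\times\langle 1-\zeta_3\rangle\big)\otimes\Z_3$; twisting once more and computing the $\Gamma$-action on the two summands, the invariant subgroup works out to be the copy of $\Z/3$ spanned by the (suitably Tate-twisted) class $c$ of the cube root of unity $\zeta_3\in\mu_6$ --- a cyclotomic class in the spirit of Soulé. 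Restricting $c$ to $\mathbb{F}_p$ factors through the residue field $\mathbb{F}_q$ of $\Z[\zeta_3,1/3]$ over $p$, where $q=p$ if $p\equiv 1\pmod 3$ and $q=p^2$ if $p\equiv 2\pmod 3$, so that $v_3(q-1)=v$ in either case, and the restriction $H^1_{\text{\'et}}(\mathbb{F}_p;\Z_3(2))\to H^1_{\text{\'et}}(\mathbb{F}_q;\Z_3(2))$ is an isomorphism (a direct check on the procyclic groups $G_{\mathbb{F}_p}\supset G_{\mathbb{F}_q}$). Under Kummer theory $c|_{\mathbb{F}_q}$ is the class of $\bar\zeta_3\in\mathbb{F}_q^\times$, an element of exact order $3$; since $\mathbb{F}_q^\times$ is cyclic of order $q-1$ with $v_3(q-1)=v$, this class generates the order-$3$ subgroup of $\mathbb{F}_q^\times\otimes\Z_3\cong\Z/3^{v}$, and in particular is nonzero. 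Hence $c|_{\mathbb{F}_p}\neq 0$ and the map is injective.

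The step I expect to be most delicate is locating the generator $c$ in the middle paragraph: the Tate-twist and Galois-descent bookkeeping. Crucially one must work genuinely $3$-adically (with $\Z_3$-coefficients) rather than with $\Z/3$-coefficients, since the restricted class, although it vanishes modulo $3$ whenever $v\geq 2$ (for instance when $p=19$), is nonzero in $\Z/3^{v}$ --- the mod-$3$ computation would give the wrong, strictly weaker conclusion. A variant avoiding étale cohomology runs as follows: the unit map $\pi_3^s\to K_3(\Z)$ is injective (this would need a reference, e.g.\ Arlettaz's bounds on its kernel), so $K_3(\Z)_{(3)}$ is generated by the image-of-$J$ class $\alpha_1\in(\pi_3^s)_{(3)}\cong\Z/3$; the composite $\pi_3^s\to K_3(\Z)\to K_3(\mathbb{F}_p)$ is then the unit of $K(\mathbb{F}_p)$, and using Quillen's description of $K(\mathbb{F}_p)^{\wedge}_3$ as the fibre of $\psi^p-1$ on $ku^{\wedge}_3$ one computes the image of $\alpha_1$ in $K_3(\mathbb{F}_p)\cong\Z/(p^2-1)$ via the Adams $e$-invariant to be a unit multiple of $(p^2-1)/3$, once again the generator of the order-$3$ subgroup.
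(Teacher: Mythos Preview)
Your proposal is correct. The main route you take---through the Quillen--Lichtenbaum identification with \'etale cohomology and an explicit Soul\'e-type cyclotomic generator---is genuinely different from the paper's, which instead uses precisely the ``variant'' you sketch at the end. The paper cites Quillen's letter to Milnor for the statement that for every odd prime $\ell$ the composite $\pi_{4k-1}^s\to K_{4k-1}(\Z)\to K_{4k-1}(\Z/p)$, restricted to the image of $J$, gives an isomorphism $(\mathrm{Im}\,J)_{(\ell)}\cong K_{4k-1}(\Z/p)_{(\ell)}$; combined with $K_3(\Z)\cong\Z/48$ (Lee--Szczarba), $\pi_3^s=\mathrm{Im}\,J\cong\Z/24$, $K_3(\Z/p)\cong\Z/(p^2-1)$ (Quillen), and $3\mid p^2-1$ for $p\neq 3$, the lemma follows in two lines. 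Your \'etale-cohomological argument makes the cyclotomic nature of the generator explicit and would generalise more readily to other weights and primes, but it invokes the full Voevodsky--Rost machinery for what is, as your own variant recognises, a 1970s computation. The paper's route is shorter and self-contained modulo Quillen's original work; in particular it does not need to separately verify that the unit map $\pi_3^s\to K_3(\Z)$ is injective, since that is absorbed into the cited statement about the composite.
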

\begin{proof}
According to \cite[Claim 4]{QuillenLetter}, for every integer $k\geq 1$ and odd prime $\ell\neq p$, the composition $\pi_{4k-1}^s\to K_{4k-1}(\Z)\to K_{4k-1}(\Z/p)$ is injective on $\mathrm{Im}(J:\pi_{4k-1}(O)\to \pi_{4k-1}^s)_{(\ell)}$: indeed, Diagram 4 loc. cit. is the commutative diagram
$$
\begin{tikzcd}
\pi_{4k-1}^s\rar\dar["-e"'] & K_{4k-1}(\Z/p)\cong \Z/(p^{2k}-1)\dar[hook, "\theta"']\\
\Q/a_k\Z\rar & \Q/\Z[\tfrac{1}{p}],
\end{tikzcd}
$$
where $e$ denotes Adams' invariant, which is injective on $\operatorname{Im} J$, $\theta$ is injective with image the unique subgroup of order $p^{2k}-1$, and $a_k$ is $1$ or $2$ depending on whether $k$ is odd or even, respectively. The lower horizontal map is the natural one, which is injective on $\ell$-torsion if $2p$ does not divide $\ell$.

For $k=1$, the image of the $J$-homomorphism is the whole of $\pi_3^s\cong \Z/24$, $K_3(\Z/p)\cong \Z/(\hspace{1pt} p^2-1)$ by \cite[Thm. 8($i$)]{QuillenKtheoryfinfields}, and $K_3(\Z)\cong \Z/48$ by \cite{K3Z}. Noting that $3\mid p^2-1$ if $p\neq 3$ is prime, the result readily follows from the previous claim  when $\ell=3$. 
\end{proof}

\begin{proof}[Proof of Proposition \ref{Ktheoryappendixprop}]
It is well known that $K_2(\Z)\cong \Z/2$ \cite[Cor. 10.2]{MilnorKtheory}, $K_3(\Z/p)\cong \Z/(\hspace{1pt} p^2-1)$ and $K_2$ of the ring of integers of a number field is finite \cite[Thm. 1]{Quillenalgebraicnumberfields}, \cite[Prop. 12.2]{BorelKtheorynumberfield} (in particular $K_2(\Z(\zeta_p))$ is). A very similar argument to \cite[Lem. 2.4]{MarkusLandKtheory} replacing the Serre class of $\Lambda$-local abelian groups with the Serre class of finitely generated abelian groups shows that the map $K_3(\mathcal R)\to K_3(\Z/p)$ is an equivalence mod this Serre class, so as $K_3(\Z/p)\cong \Z/(\hspace{1pt} p^2-1)$ is finitely generated, so is $K_3(\mathcal R)$. In fact since $K_3(\mathcal R)$ is finitely generated and $K_3(\mathcal R)\otimes \Z[1/p]\cong K_3(\Z/p)\otimes \Z[1/p]\cong \Z/(\hspace{1pt} p^2-1)$ is finite, $K_3(\mathcal R)$ is finite too. It follows from (\ref{MVZp}) that $K_2(\Z[C_p])$ is finite for every $p$.

Let now $p=7$ so that $K_3(\Z/7)\cong \Z/48$, and hence by Lemma \ref{K3Fplemma}, the map $\Z/3\cong K_3(\Z)_{(3)}\to K_3(\Z/7)_{(3)}\cong \Z/3$ is an isomorphism. Now $K_2(\Z(\zeta_7))=\Z/2$ \cite[Thm. 1.1]{K2algnumberfields}, and localising (\ref{MVZp}) at the prime $3$($\neq p=7$) we get that $K_2(\Z[C_7])_{(3)}=0$.
\end{proof}

\section{Connections to Weiss--Williams I}\label{appendixB}

\subsection{\texorpdfstring{The group of $h$-block diffeomorphisms $\bdiffh(M)$}{The group of h-block diffeomorphisms BDiffh(M)}}
Recall that $\bdiffb(M\times \R)_\bullet$ denotes the semi-simplicial group of block diffeomorphisms of $M\times \R$ \textit{bounded in the $\R$-direction}---a $p$-simplex consists of a face-preserving (cf. Definition \ref{stratifieddefn}) diffeomorphism $\phi: M\times \R\times \Delta^p\overset{\cong}\longrightarrow_\Delta M\times \R\times \Delta^p$ such that there exists some positive constant $K>0$ with $|\mathrm{pr}_\R\phi(x,t,v)-t|<K$ for all $(x,t,v)\in M\times \R\times \Delta^p$. In this section we prove
\begin{prop}\label{appendixBprop}
For $d=\dim M\geq 5$, there is a zig-zag of weak equivalences of Kan semi-simplicial sets
$$
\begin{tikzcd}
\Omega B\bdiffh(M)_\bullet &\bdiffb_{>1/2}(M\times\R)_\bullet\lar["\simeq","\mathcal R_\bullet"']\rar[hook, "\simeq"] &\bdiffb(M\times \R)_\bullet.
\end{tikzcd}
$$
In particular, there are homotopy equivalences
$$
\bdiffh(M):=|GB\bdiffh(M)_\bullet|\simeq|\Omega B\bdiffh(M)_\bullet|\simeq \bdiffb(M\times\R).
$$
\end{prop}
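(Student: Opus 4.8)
The plan is to construct the three semi-simplicial sets appearing in the zig-zag, exhibit the two maps $\mathcal R_\bullet$ and the inclusion, and then identify each as a weak equivalence; the final statement about $\bdiffh(M)$ follows formally from $\Omega B\bdiffh(M)\simeq \bdiffh(M)$ (since $B\bdiffh(M)_\bullet$ is connected and $\bdiffh(M):=GB\bdiffh(M)_\bullet$ is a Kan loop group model) together with the two-out-of-three property.

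\textbf{Setting up the objects and maps.} First I would recall the semi-simplicial model of $B\bdiffh(M)_\bullet$ whose $p$-simplices are manifolds $W^{d+p}\Rightarrow\Delta^p$ admitting a face-preserving $h$-equivalence to $W_0\times\Delta^p$, pointed at $M$. Its loop space $\Omega B\bdiffh(M)_\bullet$ (in the semi-simplicial sense, e.g.\ via the $\mathrm{Sing}|{-}|$-model or directly as a based-loop construction) has $p$-simplices given by $(d+p+1)$-manifolds $W\Rightarrow\Delta^{p+1}$ with $\Lambda_0(W)=M\times\Lambda_0^{p+1}$, i.e.\ $h$-cobordisms from $M\times\Lambda_0^{p+1}$ parametrised over $\Delta^{p+1}$ rel the horn---this is exactly the kind of object that $F_\bullet(M)$ is built from, and the key point (using the $s$-cobordism theorem as in the proof of Proposition~\ref{connectedcompsofMs}) is that such a $W$ is automatically diffeomorphic rel its horn to a mapping cylinder of a bounded block self-diffeomorphism of the open collar $M\times[0,\infty)$. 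Meanwhile $\bdiffb(M\times\R)_\bullet$ and the sub-semi-simplicial set $\bdiffb_{>1/2}(M\times\R)_\bullet$ of those bounded block diffeomorphisms that restrict to the identity on $M\times(-\infty,1/2]$ are defined as in the excerpt. The map $\mathcal R_\bullet$ sends such a $\phi$ to the $h$-cobordism obtained by truncating $M\times[1/2,\infty)$ and using boundedness to produce a compact $h$-cobordism-over-$\Delta^p$ parametrising the "difference region'' of $\phi$; concretely, $\phi$ restricted to a large slab $M\times[1/2,K]\times\Delta^p$ and straightened at the ends is such a family. The inclusion $\bdiffb_{>1/2}\hookrightarrow\bdiffb$ is the evident one.

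\textbf{Proving the two maps are equivalences.} The inclusion $\bdiffb_{>1/2}(M\times\R)_\bullet\hookrightarrow\bdiffb(M\times\R)_\bullet$ is a weak equivalence by a standard Alexander/pushing-to-infinity argument: any bounded block diffeomorphism is bounded-isotopic to one that is the identity on a half-line, and the isotopy can be made block and natural enough to give a deformation retraction at the level of geometric realisations (this is the bounded analogue of the contractibility of block concordances, cf.\ \cite[Lem.~2.1]{BurgLashRoth}). For $\mathcal R_\bullet$, I would argue it is a weak equivalence by showing it is surjective on $\pi_0$ and an isomorphism on all homotopy groups: the inverse construction glues a given parametrised $h$-cobordism from $M\times[1/2,\infty)$ back onto $M\times(-\infty,1/2]$ and extends by a bounded diffeomorphism using the open-collar version of the $s$-cobordism theorem; boundedness in the $\R$-direction is exactly what lets one iterate the $h$-cobordism infinitely often without leaving the bounded category. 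The matching of face and degeneracy maps is routine. The essential input throughout is the $s$-cobordism theorem (Theorem~\ref{scobthm}), exactly as in Proposition~\ref{connectedcompsofMs}.

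\textbf{Main obstacle.} The hard part will be making precise the passage between "compact parametrised $h$-cobordism over $\Delta^p$, stratified with $\epsilon$-collars'' and "bounded block diffeomorphism of the open manifold $M\times\R$'', i.e.\ checking that $\mathcal R_\bullet$ and its inverse are genuinely well-defined semi-simplicial maps respecting the face-preserving/collaring conditions of Definition~\ref{stratifieddefn}, and that the boundedness constant can be controlled uniformly over a simplex and its faces. This is a technical but not conceptually deep point; once the bookkeeping of collars is handled, the equivalences follow from the $s$-cobordism theorem and the contractibility of (bounded) block concordances as indicated. I would present the collar bookkeeping by citing the conventions of \cite[$\S$2]{RWLuckCollar} and only spell out the $p=0$ and $p=1$ cases in detail, leaving the inductive step to the reader as in the proof of Proposition~\ref{connectedcompsofMs}.
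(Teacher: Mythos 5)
Two significant problems. First, your definition of $\bdiffb_{>1/2}(M\times\R)_\bullet$ does not match the paper's: the condition is that $\phi$ \emph{preserves} the subset $M\times(1/2,\infty)\times\Delta^p$, not that it restricts to the identity on $M\times(-\infty,1/2]$. With the correct definition your proposed proof that the inclusion is a weak equivalence---``bounded-isotope to the identity on a half-line''---is not available, and the version you state is in fact false: a nontrivial translation $T_c$ is bounded but cannot be bounded-isotopic to anything fixing a half-line pointwise. The paper's Lemma \ref{lemmadiff<1/2} argues differently: given a $p$-simplex $\phi\in\bdiffb(M\times\R)_p$ whose faces already lie in the subcomplex, one post-composes with a translation $T_\rho$ built from a cut-off function $\rho:\Delta^p\to\R_{\geq0}$ vanishing near $\partial\Delta^p$ and equal to a suitable constant in the interior, and $T_{(-)\cdot\rho}\circ\phi$ gives the required relative homotopy. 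This is a concrete Kan-style lifting argument, not a deformation retraction via an Alexander trick.

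Second, and more seriously, you propose to prove that $\mathcal R_\bullet$ is a weak equivalence by directly constructing an inverse via the Eilenberg swindle. That swindle depends on choosing trivialisations $W\cup-W\cong M\times[0,1]$ and $-W\cup W\cong M\times[0,1]$; there is no obvious way to make these choices coherently over all simplices so as to produce a semi-simplicial map, and even well-definedness on $\pi_0$ would require a separate argument. The paper does \emph{not} prove the equivalence from the $s$-cobordism theorem alone. It sets up a map of fibration sequences from $\bdiff(M)_\bullet\to\bdiffb_{>1/2}(M\times\R)_\bullet\to\bdiffb_{>1/2}(M\times\R)/\bdiff(M)$ to $\Omega B\bdiff(M)_\bullet\to\Omega B\bdiffh(M)_\bullet\to\bdiffh/\bdiff(M)$ and applies the five lemma, where the essential external input is the Weiss--Williams computation \cite[Cor.~5.5]{WWI} that $\pi_*([\mathcal R_\bullet]):\pi_*(\bdiffb(M\times\R)/\bdiff(M))\to H_*(C_2;\Wh(M))$ is an isomorphism for $*\geq1$ and injective for $*=0$. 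The Eilenberg swindle you describe is used in the paper only for the remaining point---$\pi_0$-surjectivity---and even there only to produce one representative, not a map. If you want to avoid citing Weiss--Williams you would essentially be reproving their Corollary~5.5, which is a substantial undertaking and should be acknowledged as such rather than attributed to the $s$-cobordism theorem. Your final two-out-of-three reduction for $\bdiffh(M)\simeq\bdiffb(M\times\R)$ is fine and matches the paper.
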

Let us explain the new notation. Recall that the simplicial loop space $\Omega B\bdiffh(M)_\bullet$ has as $p$-simplices those $(\hspace{1pt} p+1)$-simplices $W\Rightarrow \Delta^{p+1}$ of $B\bdiffh(M)_\bullet$ with $W_0=M$ and $\partial_0W=M\times \Delta^p$. The sub-semi-simplicial set $\bdiffb_{>1/2}(M\times \R)_\bullet\subset \bdiffb(M\times \R)_\bullet$ has as $p$-simplices those bounded diffeomorphisms $\phi: M\times \R\times \Delta^p\overset{\cong}{\longrightarrow}_\Delta M\times\R\times \Delta^p$ with
\[
\phi(M\times(1/2,\infty)\times \Delta^{p})\subset M\times(1/2,\infty)\times\Delta^p.
\]
The map $\mathcal R_\bullet$ sends a diffeomorphism $\phi\in \bdiffb_{>1/2}(M\times\R)_p$ to the region in $M\times \R\times \Delta^p$ enclosed by $M\times\{0\}\times\Delta^p$ and $\phi(M\times \{1\}\times \Delta^p)$, seen as a $(\hspace{1pt} p+1)$-simplex in $B\bdiffh(M)_\bullet$. More precisely, if we denote this region by $R_\phi$, then
$$
\mathcal R_p(\phi):=\left(R_\phi\cup_{\phi^{-1}}M\times \Delta^p\right)/\sim, \quad (x,0,v)\sim (x,0,w), \quad \forall\ v,w\in \Delta^p, \ x\in M,
$$
where $\phi^{-1}: \phi(M\times\{1\}\times\Delta^p)\overset\cong\longrightarrow M\times \Delta^p$ (see Figure \ref{Rphifigure}). The manifold $\mathcal R_p(\phi)^{d+p+1}$ is stratified over $\Delta^{p+1}$ with $\mathcal R_p(\phi)_0=[M\times\{0\}\times\Delta^p]\cong M$ and $\partial_0\mathcal R(\phi)=[\phi(M\times\{1\}\times\Delta^p)]=M\times\Delta^p$, so it constitutes a $p$-simplex in $\Omega B\bdiffh(M)_\bullet$. Clearly $\mathcal R_\bullet$ is a semi-simplicial map.
\begin{figure}[h]
    \centering
    \includegraphics[scale = 0.12]{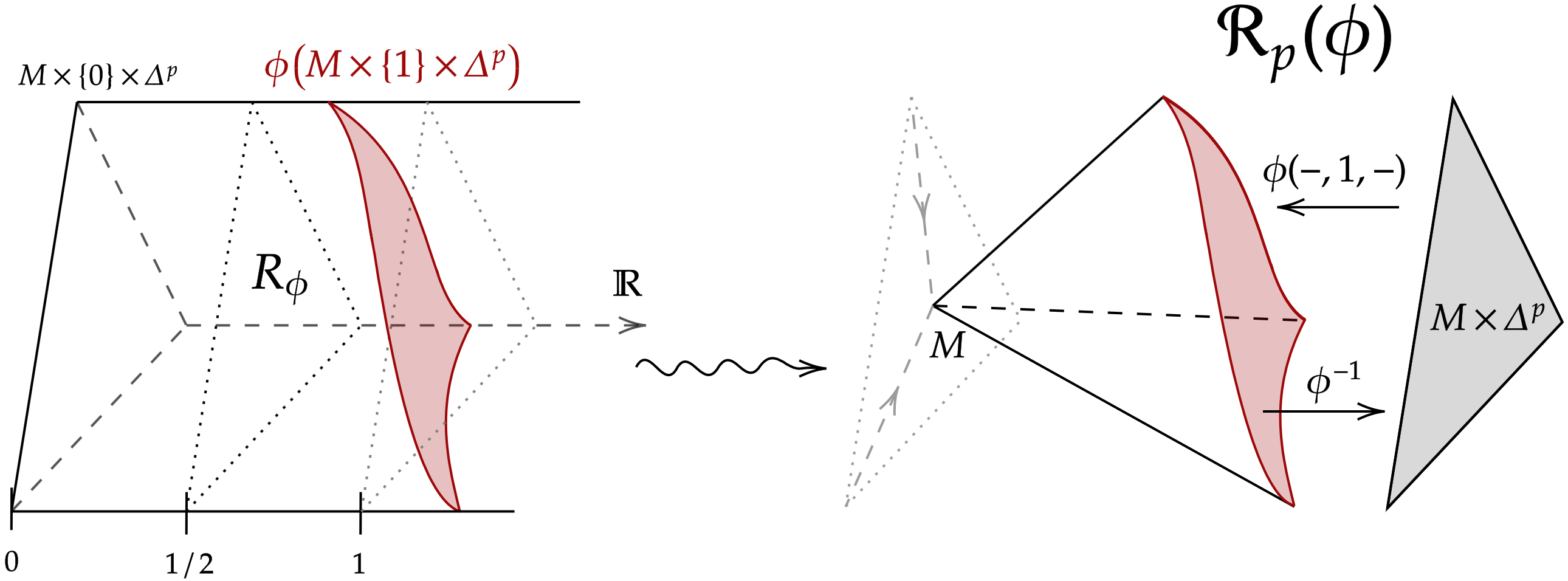}
    \caption{The map $\mathcal R_\bullet$ with $p=2$ and $\dim M=0$.}
    \label{Rphifigure}
\end{figure}

We have to argue that both of the maps in the zig-zag of Proposition \ref{appendixBprop} are equivalences. We begin with the inclusion.

\begin{lem}\label{lemmadiff<1/2}
The inclusion $\bdiffb_{>1/2}(M\times\R)_\bullet\lhook\joinrel\xrightarrow{\ \simeq\ }\bdiffb(M\times \R)_\bullet$ is a weak equivalence.
\end{lem}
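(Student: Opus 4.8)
The plan is to show that any bounded block diffeomorphism of $M \times \R$ is, after a canonical isotopy, one that preserves the half-space $M \times (1/2,\infty) \times \Delta^p$, and that this correction can be carried out simplex-by-simplex in a way compatible with faces, so that it assembles into a deformation retraction of the whole semi-simplicial set. Concretely, I would proceed as follows. First, observe that given $\phi \in \bdiffb(M\times\R)_p$, the boundedness condition means $\phi$ moves the slab $M\times[-K,K]\times\Delta^p$ into $M\times[-2K,2K]\times\Delta^p$ for some $K$; in particular, for large $T$ the hypersurface $\phi(M\times\{T\}\times\Delta^p)$ lies in $M\times(1/2,\infty)\times\Delta^p$, and symmetrically $\phi(M\times\{-T\}\times\Delta^p)$ lies below $M\times\{1/2\}$. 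The idea is to precompose $\phi$ with a ``stretch'' in the $\R$-direction that pushes the whole picture up, or more precisely to use a bounded isotopy supported near the $\R$-direction that shifts $\phi$ so that $\phi(M\times\{1/2\}\times\Delta^p)$ straddles the correct region.

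The cleanest route is probably this: let $\bdiffb_{>1/2}(M\times\R)_\bullet \hookrightarrow \bdiffb(M\times\R)_\bullet$ and exhibit a semi-simplicial homotopy inverse. Choose, for each $\phi$, a bounded diffeomorphism $g_\phi$ of $M\times\R\times\Delta^p$ of the form $(x,t,v)\mapsto (x, \lambda_\phi(t,v), v)$ (fibrewise in $M$, increasing in $t$, bounded displacement) such that $g_\phi \circ \phi$ preserves $M\times(1/2,\infty)\times\Delta^p$; such a $\lambda_\phi$ exists because $\phi$ has bounded displacement, so one just needs $\lambda_\phi$ to dominate the (bounded) downward displacement of the hypersurface $\phi(M\times\{1/2\}\times\Delta^p)$. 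One can make the choice of $\lambda_\phi$ continuous and face-compatible by a partition-of-unity / averaging argument over the simplices, or more simply by taking $\lambda_\phi$ to be an explicit function of a bound $K(\phi)$ chosen once for a neighbourhood. Then $\phi \mapsto g_\phi\circ\phi$ is a semi-simplicial map $\bdiffb(M\times\R)_\bullet \to \bdiffb_{>1/2}(M\times\R)_\bullet$, and the straight-line (or rather, the obvious concatenation) isotopy from $\mathrm{Id}$ to $g_\phi$ gives a semi-simplicial homotopy from the composite back to the identity on $\bdiffb(M\times\R)_\bullet$, while restricting to such a homotopy on the subspace shows the composite the other way is homotopic to the identity on $\bdiffb_{>1/2}$. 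Here I would invoke that these semi-simplicial sets are Kan (as in the remark after \cite[Defn. 2.3.1]{RWLuckCollar}), so a semi-simplicial homotopy suffices to conclude a weak equivalence.

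The main obstacle I anticipate is making the choice of the shift $\lambda_\phi$ genuinely natural in the simplicial variable — i.e. commuting with all face maps — rather than just fibrewise-correct. The boundedness constant $K(\phi)$ is not a continuous function of $\phi$, so one cannot literally set $\lambda_\phi$ to depend on it; instead one should fix an increasing family of functions and use that, on each simplex of $\bdiffb(M\times\R)_\bullet$, one is really working with a fixed $\phi$ and hence a fixed $K$, checking that the resulting $g_\phi$ restricts correctly on each face $\partial_i\phi$ (which has the same or smaller bound). An alternative that sidesteps continuity issues entirely is to filter $\bdiffb(M\times\R)_\bullet$ by the bound $K$ — writing it as an increasing union of sub-semi-simplicial sets $\bdiffb_{\le K}$ — deformation-retract each piece into $\bdiffb_{>1/2}$ by a $K$-dependent shift, and check the retractions are compatible as $K$ grows; then pass to the colimit. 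I would present the argument in this filtered form, since it makes the face-compatibility bookkeeping transparent and avoids any appeal to partitions of unity on the diffeomorphism groups.
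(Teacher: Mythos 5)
Your approach -- translating in the $\R$-direction to push the image of $M\times\{1/2\}$ above height $1/2$, and using the straight-line isotopy of translations as the homotopy -- is the right mechanism, and it is what the paper uses as well. But the structure of the argument you wrap around it has a gap. The sticking point you correctly anticipate, face-compatibility of the shift $\lambda_\phi$, is not resolved by the filtration you propose: on $\bdiffb_{\le K}$ the retraction is "translate by $K$", on $\bdiffb_{\le K'}$ it is "translate by $K'$", and for $K<K'$ these do \emph{not} agree on $\bdiffb_{\le K}\subset \bdiffb_{\le K'}$. So the retractions are not compatible with the inclusions as stated, only compatible up to homotopy, and passing to the colimit then requires the extra bookkeeping you were hoping to avoid (in particular, for injectivity you would need the homotopy $\mathrm{Id}\simeq i\circ r_K$ to be rel the subspace, which a global translation is not).

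The paper sidesteps a global retraction entirely and instead shows the \emph{relative} homotopy groups of the pair vanish: given a single $p$-simplex $\phi$ all of whose faces already lie in $\bdiffb_{>1/2}$, it builds a $\psi$ in the subspace with the same faces, homotopic to $\phi$ through the pair. The key device -- and the ingredient missing from your write-up -- is that the translation amount is not a constant but a smooth cutoff $\rho:\Delta^p\to\R_{\ge 0}$ which vanishes on a small neighbourhood of $\partial\Delta^p$ (where $\phi$ is already in the subspace by hypothesis, so no correction is needed there) and equals the needed shift $t_-$ in the interior. Setting $\psi=T_\rho\circ\phi$ and using the homotopy $T_{s\rho}\circ\phi$, $s\in[0,1]$, one gets a compression of $\phi$ into $\bdiffb_{>1/2}$ that is the identity on all faces. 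Since both semi-simplicial sets are Kan, this vanishing of relative homotopy groups is exactly the statement that the inclusion is a weak equivalence, with none of the face-compatibility or colimit issues. I would recommend you replace the global-retraction/filtration scaffolding with this simplex-by-simplex compression rel $\partial\Delta^p$.
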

\begin{proof}
For a smooth function $\rho: \Delta^p\to \R$, let $T_\rho$ denote the bounded diffeomorphism
$$
T_\rho: M\times \R\times \Delta^p\overset{\cong}\longrightarrow M\times \R\times \Delta^p, \quad (x,t,v)\longmapsto (x,t+\rho(v),v).
$$
We first show that if $\phi\in \bdiffb(M\times \R)_p$ with $\partial_i\phi\in \bdiffb_{>1/2}(M\times \R)_{p-1}$ for all $i=0,\dots, p$, then there exists some $\psi\in \bdiffb_{>1/2}(M\times \R)_p$ with $\partial_i\psi=\partial_i\phi$ for $i=0,\dots,p$ (simplicially) homotopic to $\phi$ in $(\bdiffb(M\times \R)_\bullet, \bdiffb_{>1/2}(M\times \R)_\bullet)$. So let $\phi$ be such a diffeomorphism and set
$$
t_-:=1/2-\min\left\{\mathrm{pr}_\R(\phi(x,1/2,v): x\in M,\ v\in \Delta^p\right\}.
$$
As $\phi$ is continuous, there exists some $\delta>0$ such that for a $\delta$-neighbourhood $B_\delta(\partial\Delta^p)$ of $\partial\Delta^p\subset \Delta^p$,
$$
\phi(M\times (1/2,\infty)\times B_\delta(\partial\Delta^p))\subset M\times (1/2,\infty)\times \Delta^p.
$$
Let $\rho: \Delta^p\to \R_{\geq 0}$ be a smooth cut-off function such that 
$$
\rho\mid_{B_{\delta/2}(\partial \Delta^p)}\equiv 0, \qquad \rho\mid_{\Delta^p\setminus B_\delta(\partial\Delta^p)}\equiv t_-.
$$
Then $\psi:=T_\rho\circ \phi\in \bdiffb_{>1/2}(M\times \R)_p$ is as required. Moreover, the diffeomorphism
$$
T_{(-)\cdot\rho}\circ\phi: (M\times \R\times \Delta^p)\times I\overset{\cong}\longrightarrow (M\times \R\times \Delta^p)\times I, \quad (x,t,v,s)\longmapsto T_{s\cdot\rho}(\phi(x,t,v))
$$
provides the required simplicial homotopy between $\phi$ and $\psi$.

It follows easily from the previous claim that $\pi_p(\bdiffb_{>1/2}(M\times \R)_\bullet)\to \pi_p(\bdiffb(M\times \R)_\bullet)$ is an isomorphism for all $p\geq 0$.
\end{proof}

\begin{lem}
The map $\mathcal R_\bullet$ is a weak equivalence.
\end{lem}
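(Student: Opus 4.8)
The plan is to verify that $\mathcal R_\bullet$ satisfies the relative approximation (Kan lifting) criterion for a weak equivalence between Kan semi-simplicial sets: for every $n\geq 0$, every $n$-simplex $W\Rightarrow\Delta^{n+1}$ of $\Omega B\bdiffh(M)_\bullet$ together with a compatible system of lifts $\phi_i\in\bdiffb_{>1/2}(M\times\R)_{n-1}$ of its faces (so $\mathcal R_{n-1}(\phi_i)=\partial_iW$ and $\partial_i\phi_j=\partial_{j-1}\phi_i$ for $i<j$), there should exist $\phi\in\bdiffb_{>1/2}(M\times\R)_n$ with $\partial_i\phi=\phi_i$ for all $i$ and with $\mathcal R_n(\phi)$ homotopic rel $\partial\Delta^n$ to $W$ in $\Omega B\bdiffh(M)_\bullet$. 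Taking the boundary data degenerate gives surjectivity of $\pi_*(\mathcal R_\bullet)$, and the general case gives injectivity; that both semi-simplicial sets are Kan was recorded after Definition \ref{stratifieddefn} for the $\widetilde{\mathcal M}^{h}$-side (hence for the loop space $\Omega B\bdiffh(M)_\bullet$), and is standard on the $\bdiffb_{>1/2}$-side.

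The construction of $\phi$ is by \emph{stacking}. First, running the parametrised $s$-cobordism theorem exactly as in the proof of Proposition \ref{connectedcompsofMs} (fibrewise over $\Delta^{n+1}$, after unbending corners), produce a two-sided ``inverse'' $h$-block bundle $-W\Rightarrow\Delta^{n+1}$, sharing the face $\partial_0W=M\times\Delta^n$ with $W$, such that both glued block bundles $W\cup_{\partial_0W}(-W)$ and $(-W)\cup_{\partial_0W}W$ are face-preservingly diffeomorphic rel boundary to the trivial block bundle $M\times\Delta^{n+1}$; arrange moreover that $-W$ restricts over $\partial\Delta^n$ to the inverses built in the same way from the $\phi_i$. (This is possible because, by the torsion composition rule (\ref{torsofhcobcomposition}), one may prescribe the torsion of $-W$ to cancel that of $W$, exactly as in the discussion of doubles preceding Proposition \ref{propIthmAi}; note one genuinely needs this prescribed inverse rather than the naive reversal $\overline W$, whose double need not be trivial.) Now tile $M\times\R\times\Delta^n$ by a $2$-periodic bi-infinite alternating stack $\cdots\cup W\cup(-W)\cup W\cup(-W)\cup\cdots$: each adjacent pair is a trivial block bundle, so the union is face-preservingly diffeomorphic rel boundary to $M\times\R\times\Delta^n$, compatibly with the $\epsilon$-collaring conditions of Definition \ref{stratifieddefn}, via some diffeomorphism $\Theta$; set up the tiling so that $M\times[0,1]\times\Delta^n$ corresponds under $\Theta$ to one $W$-block and $M\times[1,2]\times\Delta^n$ to one $(-W)$-block. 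The period-$2$ shift of the stack, conjugated by $\Theta$, is a diffeomorphism $\phi$ of $M\times\R\times\Delta^n$ which is bounded (each point is moved by a fixed number of blocks of uniformly bounded length), face-preserving with $\partial_i\phi=\phi_i$ (by the boundary-compatible choice of tiling), and satisfies $\phi(M\times(1/2,\infty)\times\Delta^n)\subset M\times(1/2,\infty)\times\Delta^n$ after a harmless reparametrisation in the $\R$-direction, so $\phi\in\bdiffb_{>1/2}(M\times\R)_n$. By construction the region $R_\phi$ between $M\times\{0\}\times\Delta^n$ and $\phi(M\times\{1\}\times\Delta^n)$ is the three-block region $W\cup(-W)\cup W$, which is face-preservingly diffeomorphic rel boundary to $W$ since $(-W)\cup W$ is trivial; hence $\mathcal R_n(\phi)$ is diffeomorphic, and in particular homotopic, to $W$ rel $\partial\Delta^n$ in $\Omega B\bdiffh(M)_\bullet$.

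For the remaining injectivity input one argues in the same spirit: if $\phi\in\bdiffb_{>1/2}(M\times\R)_n$ has $\mathcal R_n(\phi)$ null-homotopic rel $\partial\Delta^n$, a null-homotopy is an $(n{+}1)$-simplex $V$ of $\Omega B\bdiffh(M)_\bullet$ with $\partial_0V=\mathcal R_n(\phi)$ and the other faces degenerate; realising $V$ by a bounded diffeomorphism $\Phi$ via the stacking construction yields $\partial_0\Phi$ and $\phi$ with the same image under $\mathcal R_n$, and an Alexander-trick argument (the space of bounded diffeomorphisms of $M\times\R\times\Delta^n$ agreeing with a prescribed one near $M\times[0,1]\times\Delta^n$ and with the identity far out is contractible — slide the support off to $+\infty$, cf. \cite[Lem. 2.1]{BurgLashRoth}) shows $\partial_0\Phi$ is homotopic to $\phi$ rel the relevant faces. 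Hence $\phi$ represents $0$.

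The main obstacle is the families-level bookkeeping in the stacking step: one must run the parametrised $s$-cobordism theorem to produce $-W$ and the straightening diffeomorphism $\Theta$ so as to respect the full stratified structure over $\Delta^n$ and the $\epsilon$-collaring conditions, compatibly with the prescribed boundary lifts $\phi_i$, and then check that the resulting $\phi$ lands in $\bdiffb_{>1/2}(M\times\R)_\bullet$ rather than merely $\bdiffb(M\times\R)_\bullet$ — though by Lemma \ref{lemmadiff<1/2} the latter discrepancy would in any case be immaterial. Everything else is formal.
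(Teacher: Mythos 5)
Your proposal takes a genuinely different route from the paper's. The paper compares the fibration sequences
$\bdiff(M)_\bullet\to\bdiffb_{>1/2}(M\times\R)_\bullet\to\bdiffb_{>1/2}(M\times\R)/\bdiff(M)$
and
$\Omega B\bdiff(M)_\bullet\to\Omega B\bdiffh(M)_\bullet\to\bdiffh/\bdiff(M)_\bullet$
via $M_{(-)}$, $\mathcal R_\bullet$ and $[\mathcal R_\bullet]$, notes that the mapping-cylinder map $M_{(-)}$ is an equivalence, then imports the hard input from \cite[Cor.~5.5]{WWI} (that $\pi_*([\mathcal R_\bullet])$ is an isomorphism for $*\geq 1$ and injective for $*=0$), and finishes with the five lemma plus an Eilenberg-swindle argument to get $\pi_0$-surjectivity. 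Your degree-$0$ swindle coincides with the paper's; but by insisting on a direct Kan-lifting verification in all degrees, you are in effect trying to reprove \cite[Cor.~5.5]{WWI} from scratch, and the places where you acknowledge hand-waving are precisely the places where that reference does the real work.

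The gaps are not cosmetic. First, the existence of a families-level inverse $-W\Rightarrow\Delta^{n+1}$, with \emph{both} composites $W\cup(-W)$ and $(-W)\cup W$ face-preservingly trivial rel boundary and \emph{compatibly} with the prescribed boundary lifts $\phi_i$, is not a direct consequence of ``run Proposition~\ref{connectedcompsofMs} fibrewise'': that proposition is about the simple moduli space, where all torsions vanish; here you must first prescribe the torsion functor of $-W$ (which can be done at the level of $\Falg_\bullet(\Wh(M))$) and then realise it geometrically, and in families the realisation step requires a relative $s$-cobordism argument that is exactly where the bookkeeping lives. Second, even granting $-W$, the straightening $\Theta$ of the infinite stack must be produced so that (a) it is bounded, (b) it sends $M\times\{1\}\times\Delta^n$ to the dividing wall in a face-preserving, collar-respecting way, and (c) the final identification $W\cup(-W)\cup W\cong W$ is rel $\partial\Delta^n$---compatibility with the boundary lifts is claimed but not established, and it is precisely what makes the homotopy ``rel $\partial\Delta^n$'' rather than free. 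Third, and most seriously, your injectivity step asserts that two $\phi$'s with the same image under $\mathcal R_n$ are homotopic by an Alexander trick ``sliding the support off to $+\infty$''; but $\phi_1\phi_2^{-1}$ need not be the identity anywhere except on $M\times\{0\}\times\Delta^n$ and on the image of $M\times\{1\}\times\Delta^n$, and the naive translation-to-infinity trick does not produce a \emph{bounded} isotopy from a bounded diffeomorphism to the identity. Contractibility of the fibre of $\mathcal R_n$ is a nontrivial statement---it is essentially the block-concordance contractibility \cite[Lem.~2.1]{BurgLashRoth} run simultaneously on three pieces and in a bounded setting---and you should not take it for granted. If you want to keep your direct approach, these three points need real proofs; if not, note that the paper's five-lemma argument sidesteps all of them by citing \cite[Cor.~5.5]{WWI} and only doing the easy $\pi_0$-surjectivity swindle by hand.
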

\begin{proof}
There is a map of fibration sequences
$$
\begin{tikzcd}
\Omega B\bdiff(M)_\bullet\dar & &\ar[ll, "M_{(-)}"', "\simeq"] \bdiff(M)_\bullet\dar["{\times\mathrm{Id}_\R}"] &\\
\Omega B\bdiffh(M)_\bullet\dar & &\ar[ll, "\mathcal{R}_\bullet"'] \bdiffb_{>1/2}(M\times \R)_\bullet\dar &\\
\bdiffh/\bdiff(M)_\bullet& &\ar[ll,"{[\mathcal R_\bullet]}"']\bdiffb_{>1/2}(M\times \R)/\bdiff(M)_\bullet\rar[hook, "\simeq"]&\bdiffb(M\times\R)/\bdiff(M)_\bullet.
\end{tikzcd}
$$
The map $M_{(-)}$ is the mapping cylinder construction, so it is an equivalence. In \cite[Cor. 5.5]{WWI} it is shown that the map 
$$
\pi_*([\mathcal R_\bullet]): \pi_*(\bdiffb(M\times \R)/\bdiff(M))\longrightarrow H_*(C_2;\Wh(M))
$$
is injective if $*=0$ and an isomorphism if $*\geq 1$. Clearly the image of $\pi_0([\mathcal R_\bullet])$ lies inside $\frac{I(M)}{\mathcal D(M)}\cong \pi_0(\bdiffh/\bdiff(M))$, as $\mathcal R_0(\phi)$ is an inertial $h$-cobordism for any $\phi\in \diff^b(M\times\R)$. By the five lemma, $\pi_*(\mathcal R_\bullet)$ is an isomorphism for $*\geq 1$, and $\pi_0(\mathcal R_\bullet)$ is injective (note that $\frac{I(M)}{\mathcal D(M)}$ is just a set, but this does not cause any difficulties in the argument).

It remains to show that $\pi_0(\mathcal R_\bullet)$ is surjective. We do this by an \textit{Eilenberg swindle}-like argument as in \cite[Cor. 5.5]{WWI}: namely given an inertial $h$-cobordism $W\in \Omega B\bdiffh(M)_0$, fix two trivialisations (rel the left ends) $W\cup -W\cong M\times [0,1]$ and $-W\cup W\cong M\times [0,1]$. Then there are two different ways of identifying the Eilenberg swindle
$$
S(W):=\dots\cup W\cup -W\cup W\cup\dots=\dots\cup -W\cup W\cup -W\cup\dots
$$
with $M\times \R=\bigcup_{i\in \Z}M\times [i,i+1]$ (in a bounded way). After shifting by an integer, these two identifications give rise to a bounded diffeomorphism $\phi\in \bdiffb_{>1/2}(M\times \R)_0$ such that $\mathcal R_0(\phi)$ is diffeomorphic to $M\times[0,2]\cup_{M\times \{2\}}W$ (see Figure \ref{EilenbergSwindle2}). The homotopy
$$
t\in [0,1]\longmapsto M\times[0,1-2t]\cup_{M\times\{1-2t\}}W
$$
provides a $1$-simplex in $\Omega B\bdiffh(M)_\bullet$ between $\mathcal R_0(\phi)$ and $W$, so $\pi_0(\mathcal R_\bullet)([\phi])=[W]$ as required. We also obtain that $\pi_0(\bdiffb(M\times \R)/\bdiff(M))\cong \frac{I(M)}{\mathcal D(M)}\subset H_0(C_2;\Wh(M))$, which very slightly improves \cite[Cor. 5.5]{WWI}.
\end{proof}

\begin{figure}[h]
    \centering
    \includegraphics[scale = 0.12]{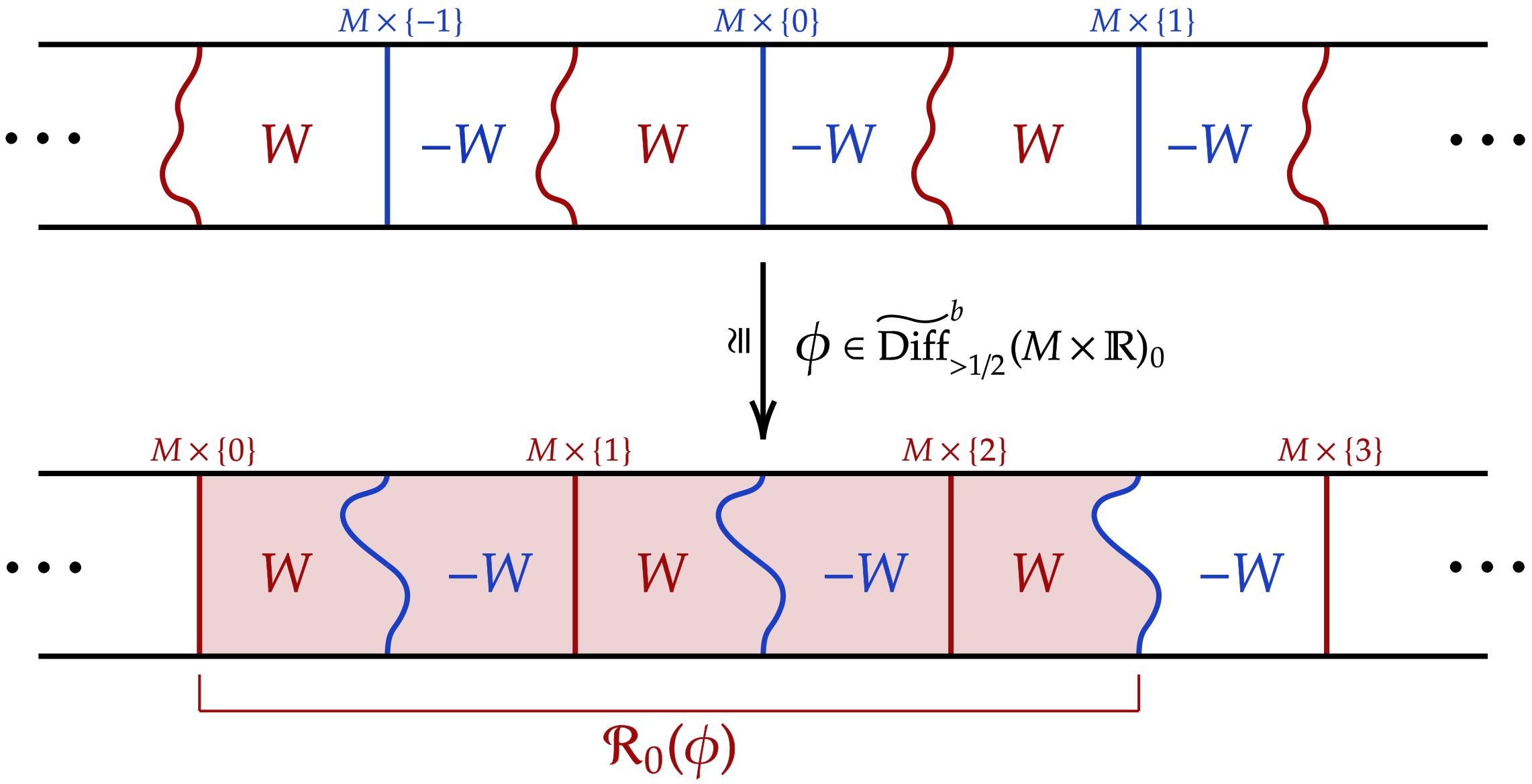}
    \caption{Geometric Eilenberg swindle.}
    \label{EilenbergSwindle2}
\end{figure}

\begin{proof}[Proof of Proposition \ref{appendixBprop}]
Every term in the zig-zag is a Kan complex; the (simplicial) loop space of a Kan complex is Kan, and both $\bdiffb(M\times\R^\infty)_\bullet$ and $\bdiffb_{>1/2}(M\times \R)_\bullet$ are Kan (here we use the collaring condition on face-preserving maps of Definition \ref{stratifieddefn}). Moreover, the simplicial loop space and the Kan loop group are weakly equivalent functors. Since geometric realisations of weak equivalences between Kan complexes are homotopy equivalences, the homotopy equivalence in the second line of the statement follows.
\end{proof}

\subsection{The Whitehead spectrum and Theorem \ref{ThmB} in the context of Weiss--Williams I}\label{whspappendixsection}

For each space $X$, there exists a spectrum $\Whsp(X)$, the \textit{non-connective smooth Whitehead spectrum} of $X$, which recovers the Whitehead group of $X$,
$$
\pi_1^s(\Whsp(X))=\Wh(X).
$$
It is defined to fit in a split\footnote{The splitting is provided by the composition of the Dennis trace map $\mathrm{Tr}: \mathbf{A}(X)\to \Sigma^\infty_+ LX$ postcomposed with the evaluation map $\Sigma^\infty_+ LX\to \Sigma^\infty_+ X$. Note that non-connective $K$-theory is the universal localising invariant in the sense of \cite{BlumbergGepnerTabuada}, so the usual Dennis trace map from \textit{connective} $K$-theory to topological Hochshild homology indeed factors through non-connective $K$-theory.} cofibre sequence of spectra
\begin{equation}\label{WaldhausenWhFibseq}
\begin{tikzcd}
\Sigma^\infty_+X\rar["\iota"] &\mathbf{A}(X)\rar[two heads] &\Whsp(X),
\end{tikzcd}
\end{equation}
where $\mathbf{A}(-)$ denotes Walhausen's non-connective $A$-theory spectrum \cite{WaldhausenAtheory, WaldhausenParametrisedhcob}. The map $\iota$ is the composition of the unit map of $A$-theory $\Sigma^\infty_+X=\mathbb{S}\wedge X_+\to \mathbf{A}(*)\wedge X_+$ and the \textit{assembly map} $\alpha: \mathbf{A}(*)\wedge X_+\to \mathbf{A}(X)$. The topological and piecewise linear versions of the Whitehead spectrum of a space $X$ coincide, and are denoted, slightly abusively, by $\Whsptop(X)$. Explicitly, it fits in a similar cofibre sequence of spectra
\begin{equation}\label{WaldhausenWhFibseqtop}
\begin{tikzcd}
\mathbf{A}(*)\wedge X_+\rar["\alpha"] &\mathbf{A}(X)\rar[two heads] &\Whsptop(X).
\end{tikzcd}
\end{equation}
The Whitehead spectrum is an invariant of the homotopy type of $X$, for both $\iota$ and $\alpha$ are.

With this in mind, let us explain the relation of Theorem \ref{ThmB} to the work of \cite{WWI}. Following the trend of the paper, define the \textit{connective (smooth) $h$-cobordism spectrum} to be
$$
\Hsp^h_\diff(M):=\tau_{\geq 0}\Hsp^h_\diff(M),
$$
the $0$-connective cover of the non-connective version $\Hsp_\diff(M)$. By \cite[Cor. 5.6]{WWI} and \cite[Cor. 5.8]{MElongknots}, it fits in a $C_2$-equivariant fibration sequence of spectra
\begin{equation}\label{Whhfibration}
\begin{tikzcd}
\Hsp^s_\diff(M)\rar& \Hsp^h_\diff(M)\rar & H\Wh(M),
\end{tikzcd}
\end{equation}
where $C_2$ acts on $\Wh(M)$ as in Theorem \ref{ThmB}. In \cite[Thm. B $\&$ C]{WWI} there is established the outer solid square of the homotopy commutative diagram 
$$
\begin{tikzcd}
\bdiff/\diff(M)\ar[rr,"\Phi^s", "\approx"']\dar[hook]& &\Omega^{\infty}\left(\Hsp_\diff^s(M)_{hC_2}\right)\dar\\
\bdiffb(M\times\R)/\diff(M)\overset{\substack{\mathrm{Prop.}\\ \text{\ref{appendixBprop}}}}\simeq\bdiffh/\diff(M)\ar[rr,dashed, "\Phi^h", "\approx_0"']\dar[hook]&&\Omega^{\infty}\left(\Hsp_\diff^h(M)_{hC_2}\right)\dar\\
\bdiffb(M\times \R^\infty)/\diff(M)\simeq\diff^{\hspace{1pt}b}(M\times \R^\infty)/\diff(M)\ar[rr, "\Phi", "\approx_0"']&&\Omega^{\infty}\left(\Hsp_\diff(M)_{hC_2}\right)
\end{tikzcd}
$$
and proved to be homotopy cartesian. The decoration $\approx$ stands for $(\phi_M+1)$-connected, and $\approx_0$ for $(\phi_M+1)$-connected onto the components that are hit, where we recall that $\phi_M$ is the concordance stable range for $M$ (see Theorem \ref{WWbdiffmoddiff}). The existence of the dashed arrow $\Phi^h$ is analogous to that of $\Phi^s$ (in a similar notation as in \cite[$\S$4]{WWI}, replace the filtration of $X:=\diff^{\hspace{1pt}b}(M\times\R^\infty)$ by $\Sigma\mathrm{Filt}_i(X):=\diff^{\hspace{1pt}b}(M\times \R^{i+1})$). The connectivity of $\Phi^h$ can be deduced from that of $\Phi^s$ and $\Phi$. 

Now observe that the composition
\[
\begin{tikzcd}
\Phi^{h/s}:\bdiffh/\bdiff(M)\rar[hook, "\simeq_0"]& {|F_\bullet(M)|} \rar[phantom, "\overset{\text{Thm. \ref{ThmB}}}{\simeq}"]&\Omega^{\infty}\left(H\Wh(M)_{hC_2}\right)
\end{tikzcd}
\]
provides a filler in the diagram of fibre sequences
$$
\begin{tikzcd}
\bdiff/\diff(M)\dar\ar[rrr,"\Phi^s", "\approx"']&&&\Omega^{\infty}\left(\Hsp_\diff^s(M)_{hC_2}\right)\dar\\
\bdiffh/\diff(M)\dar\ar[rrr,"\Phi^h", "\approx_0"']&&&\Omega^{\infty}\left(\Hsp_\diff^h(M)_{hC_2}\right)\dar\\
\bdiffh/\bdiff(M)\ar[dashed, rrr,"\Phi^{h/s}","\simeq_0"']&&&\Omega^{\infty}\left(H\Wh(M)_{hC_2}\right),
\end{tikzcd}
$$
where the right vertical sequence is obtained from (\ref{Whhfibration}) by applying the functor $\Omega^\infty((-)_{hC_2})$. The lower subsquare ought to be homotopy commutative, but we do not have a proof of this claim.

\bibliography{bibliographymcg.bib} 
\bibliographystyle{amsalpha}

\end{document}